\pgfplotsset{compat=newest}
\newcommand{\R}{\mathbb{R}}
\newcommand{\E}{\mathcal{E}}
\newcommand{\calH}{\mathcal{H}}
\newcommand{\calP}{\mathcal{P}}
\newcommand{\N}{\mathbb{N}}
\newcommand{\C}{\mathbb{C}}
\newcommand{\vect}{\mathrm{span}}
\newcommand{\sprod}[1]{\left\langle #1 \right\rangle}
\newtheorem{lemma}{Lemma}
\newtheorem{theorem}{Theorem}
\newtheorem{proposition}{Proposition}
\newtheorem{remark}{Remark}
\newtheorem{assumption}{Assumption}
\newcommand{\calM}{\mathcal{M}}
\newcommand{\M}{\mathcal{M}}
\newcommand{\B}{\mathcal{B}}
\newcommand{\Q}{\mathcal{Q}}
\newcommand{\Id}{\mathrm{Id}}
\newcommand{\calD}{\mathcal{D}}
\newcommand{\calC}{\mathcal{C}}
\renewcommand{\N}{\mathbb{N}}
\renewcommand{\ker}{\mathrm{ker} \,}
\newcommand{\ran}{\mathrm{ran} \,}
\newcommand{\norm}[1]{\left\Vert #1 \right\Vert}
\newcommand{\supp}{\mathrm{supp}}
\newcommand{\abs}[1]{\left\vert #1 \right\vert}
\newcommand{\st}{\text{ subject to }}
\newcommand{\sse}{\subseteq}
\newcommand{\set}[1]{\left\{ #1 \right \}}
\newcommand{\wstarto}{\stackrel{*}{\rightharpoonup}}
\newcommand{\wto}{{\rightharpoonup}}
\DeclareMathOperator{\esssup}{ess sup}
\title{Exact solutions of infinite dimensional total-variation regularized problems.}
\author{
Axel Flinth \& Pierre Weiss \\
}
\begin{document}

\maketitle

\begin{abstract}
We study the solutions of infinite dimensional linear inverse problems over Banach spaces.
The regularizer is defined  as the total variation of a linear mapping of the function to recover, while the data fitting term is a near arbitrary function.
The first contribution describes the solution's structure: we show that under mild assumptions, there always exists an $m$-sparse solution, where $m$ is the number of linear measurements of the signal. 
Our second contribution is about the computation of the solution. While most existing works first discretize the problem, we show that exact solutions of the infinite dimensional problem can be obtained by solving one or two consecutive finite dimensional convex programs depending on the measurement functions structures. These results extend recent advances in the understanding of total-variation regularized inverse problems.
\end{abstract}

\section{Introduction}\label{sec:intro}
Let $u\in \B$ be a signal in some vector space $\B$ and assume that it is probed indirectly, with $m$ corrupted linear measurements: 
\begin{align*}
 b=P(Au),
\end{align*}
where $A:\B\to \R^m$ is a measurement operator defined by $(Au)_i=\langle a_i , u\rangle$, each $a_i$ being an element in $\B^*$, the dual of $\B$. The mapping $P:\R^m\to\R^m$ denotes a perturbation of the measurements, such as quantization, additional Gaussian or Poisson noise, or any other common degradation operator.
Inverse problems consist in estimating $u$ from the measurements $b$.
Assuming that $\dim(\B)>m$, it is clearly impossible to recover $u$ knowing $b$ only. Hence, various regularization techniques have been proposed to stabilize the recovery. 

Probably the most well known and used example is Tikhonov regularization \cite{kaltenbacher2008iterative}, which consists in minimizing quadratic cost functions. The regularizers are particularily appreciated for their ease of analysis and implementation.
Over the last 20 years, sparsity promoting regularizers have proved increasingly useful, especially when the signals to recover have some underlying sparsity structure. 
Sparse regularization can be divided into two categories: the analysis formulation and the synthesis formulation. 

The analysis formulation consists in solving optimization problems of the form
\begin{align}\label{eq:mainproblem}
 \inf_{u\in \B} J(u):=f_b\left(Au \right) + \|Lu\|_{TV},
\end{align} 
where $f_b:\R^m \to \R\cup\{+\infty\}$ is an application dependent data fidelity term and $L:\B \to \E$ is a linear operator, mapping $\B$ to some space $\E$ such as $\R^n$, the space of sequences in $\ell_1$ or the space of Radon measures $\M$. The total variation norm $\|\cdot\|_{TV}$ coincides with the $\ell_1$-norm when $\E$ is discrete, but it is more general since it also applies to measures.

The synthesis formulation on its side consists in minimizing 
\begin{align}\label{eq:synthesis}
 \inf_{\mu \in \E} f_b\left(A D \mu\right) + \|\mu \|_{TV},
\end{align}
where $D:\E\to \B$ is the linear synthesis operator, also called dictionary. The estimate of $u$ in that case reads $\hat u = D\hat \mu$, where $\hat \mu$ is a solution of \eqref{eq:synthesis}.

Problems \eqref{eq:mainproblem} and \eqref{eq:synthesis} triggered a massive interest from both theoretical and practical perspectives. Among the most impressive theoretical results, one can cite the field of compressed sensing \cite{candes2006robust} or super-resolution \cite{candes2014towards,duval2015exact}, which certify that under suitable assumptions, the minimizers of \eqref{eq:mainproblem} or \eqref{eq:synthesis} coincide with the true signal $u$. 

Most of the studies in this field are confined to the case where both $\B$ and $\E$ are finite dimensional \cite{candes2006robust,donoho2006compressed,elad2007analysis,foucart2013mathematical}. 
In the last few years, some efforts have been provided to get a better understanding of  \eqref{eq:mainproblem} and \eqref{eq:synthesis} where $\B$ and $\E$ are sequence spaces \cite{adcock2016generalized,adcock2017breaking,unser2016representer,traonmilin2017compressed}.
Finally, a different route, which will be followed in this paper, is the case where $\E=\M$, the space of Radon measures on a continuous domain. In that case, problems \eqref{eq:mainproblem} and \eqref{eq:synthesis} are infinite dimensional problems over measure spaces. One instance in that class is that of total variation minimization (in the PDE sense \cite{ambrosio2000functions}, that is the total variation of the distributional derivative), which became extremely popular in the field of imaging since its  introduction in \cite{rudin1992nonlinear}. 
There has been surge of interest in understanding the fine properties of the solutions in this setting, with many significant results \cite{bredies2013inverse,candes2014towards,tang2013compressed,duval2015exact,chambolle2016geometric,unser2016splines}.
The aim of this paper is to continue these efforts by bringing new insights in a general setting.

\paragraph{Contributions and related works}

The main contributions are twofold: one is about the structure of the solutions of \eqref{eq:mainproblem}, while the other is about how to numerically solve this problem without discretization. The results directly apply to problem \eqref{eq:synthesis} since, with regards to our concerns, the synthesis problem \eqref{eq:synthesis} is a special case of the analysis problem \eqref{eq:mainproblem}. It indeed suffices to take $L=\Id$ and $\B=\M$ for \eqref{eq:synthesis} to be an instance of \eqref{eq:mainproblem}. Notice however that in general, the two approaches should be studied separately \cite{elad2007analysis}.  

On the theoretical side, we provide a theorem characterizing the structure of the solutions of problem \eqref{eq:mainproblem} under certain assumptions on the operator $L$. Roughly speaking, this theorem states that there always exist $m$-sparse solutions. The precise meaning of this claim will be clarified in Theorem \ref{thm:structure}. This result is strongly related and was actually motivated by \cite{unser2016splines}. In there, the authors restrict their study to certain stationary operators $L$ over spaces of functions defined on $\Omega=\R^d$. Their main result states that in that case, generalized splines with $m$ knots actually describe the whole set of solutions. 
Similar results \cite{fisher1975spline} were actually obtained much earlier on bounded domains and seem to have remained widely ignored until they were revitalised by Unser-Fageot-Ward.
The value of our result lies in the fact that it holds for more general classes of operators $L$, spaces $\B$ , domains $\Omega$ and functions $f_b$. Furthermore, the proof technique is different from \cite{unser2016splines}: it is constructive and presumably applicable to wider settings.

On the numerical side, let us first emphasize that in an overwhelming number of works, problem \eqref{eq:mainproblem} is solved by first discretizing the problem to make it finite dimensional and then approximate solutions are found with standard procedures from convex programming. Theories such as $\Gamma$-convergence \cite{braides2002gamma} then sometimes allow showing that as the discretization parameter goes to $0$, solutions of the discretized problem converge (in a weak sense) to the solutions of the continuous problem. In this paper, we show that under some assumptions on the measurement functions $(a_i)$, the infinite dimensional problem \eqref{eq:mainproblem} can be attacked directly without discretization: the resolution of one or two consecutive finite dimensional convex programs allows recovering \emph{exact solutions} to problem \eqref{eq:mainproblem} or \eqref{eq:synthesis}. The structure of the convex programs depend on the structure of measurement vectors. Once again, this result is strongly related to recent advances. For instance, it is shown in \cite{candes2014towards,tang2013compressed} that a specific instance of \eqref{eq:synthesis} with $L=\Id$ can be solved exactly thanks to semi-definite relaxation or Prony type methods when the signal domain is the torus $\Omega=\mathbb{T}$ and the functions $(a_i)$ are trigonometric polynomials. 
Similar results were obtained in \cite{de2017exact} for more general semi-algebraic domains using Lasserre hierarchies \cite{lasserre2001global}. Once again, the value of our paper lies in the fact that it holds for near arbitrary convex functions $f_b$ and for a large class of operators $L$ such as the derivative. To the best of our knowledge, the only case considered until now was $L=\Id$.
In addition, our results provide some insight on the standard minimization strategy: we show that it corresponds to solving a different infinite dimensional problem \emph{exactly}, where the sampling functions are piecewise linear. We also show that the solution of the standard discretization can be made sparser by merging Dirac masses located on neighboring grid points.

\section{Main results}\label{sec:main}

\subsection{Notation}

In all of the paper, $\Omega\subseteq \R^d$ denotes an open subset either bounded or unbounded. 
The space of distributions on $\Omega$ is denoted $\calD^*(\Omega)$.
We let $\M(\Omega)$ denote the set of Radon measures on $\Omega$, i.e. the dual $\calC_0(\Omega)^*$ of $\calC_0(\Omega)$, the space of continuous functions on $\Omega$ vanishing at infinity:
\begin{multline*}
 \calC_0(\Omega)=\begin{cases} f:\Omega\to \R, &f \textrm{ continuous}, \\ &\forall \epsilon>0, \exists C\subset \Omega \textrm{ compact }, \forall x \notin C, \abs{f(x)}<\epsilon \end{cases}\Bigg\}.
\end{multline*}
We will throughout the whole paper view $(\M(\Omega),\|\cdot\|_{TV})$ as a Banach space, and not, as often is done, as a locally convex space equipped with the weak-$*$-topology. When we do this, $\calC_0(\Omega)$ is a subset, and not the whole of, the dual $\M^*$ of $\M$ (as it would have been if we have viewed $\M$ as a locally convex space).

% We let $\M_m(\Omega)$ denote the set of atomic measures with $m$ Dirac masses:
% \begin{equation}
%  \M_m(\Omega)=\left\{\mu \in \M(\Omega), \mu=\sum_{k=1}^m \alpha_k \delta_{x_k}, \alpha_k \in \R, x_k\in \Omega\right\}.
% \end{equation}
% We let $\M_a(\Omega)$ denote the set of finitely supported atomic measures on $\Omega$ and $(e_i)_{i\in I}$ denote the canonical basis of $\R^I$.

Let $J:\B\to \R\cup \{+\infty\}$ denote a convex lower-semicontinuous function. We let $J^*$ denote its Fenchel conjugate and $\partial J(u)$ denote its subdifferential at $u\in \B$. 
Let $X\subset \mathcal{E}$ be a subset of some vector space $\mathcal{E}$. The indicator function of $X$ is defined for all $e\in\mathcal{E}$ by:
\begin{equation*}
\iota_X(e) = \begin{cases}
              0 & \textrm{if } e\in X \\
              +\infty & \textrm{otherwise}.
             \end{cases} 
\end{equation*}
We refer the reader to \cite{ekeland1999convex} for more insight on convex analysis in vector spaces.

\begin{remark}\label{rem:Omega}
All the results in our paper hold when $\Omega$ is a separable, locally compact topological space such as the torus $\mathbb{T}=\R\backslash \N$. The proofs require minor technical amendments related to the way the space is discretized. We chose to keep a simpler presentation in this paper.
\end{remark}

\subsection{Assumptions} \label{sec:assump}

Let us describe the setting in which we will prove the main result in some detail. Let $L: \calD'(\Omega) \to \calM(\Omega)$ be a continuous linear operator defined on the space of distributions $\calD'(\Omega)$.
Consider the following linear subspace of $\calD^*(\Omega)$
\begin{align*}
	\B^{\circ}= \set{u \in \calD'(\Omega) \, \vert \, Lu \in \calM(\Omega)}.
\end{align*}
Now, let $\norm{\cdot}_K$ be a semi-norm on $\B^\circ$, which restricted to $\ker L$ is a norm. We define 
\begin{align*}
	\B = \set{u \in \calD'(\Omega) \, \vert \, Lu \in \M(\Omega) , \norm{u}_K < \infty},
\end{align*}
and equip it with the norm $\norm{u}= \norm{Lu}_{TV} + \norm{u}_K$. We will assume that
\begin{assumption}[Assumption on $\B$] \label{assump:B}
$\B$ is a Banach space.
\end{assumption}
 We will make the following additional structurial assumptions on the map $L$:

\begin{assumption}[Assumptions on $L$]\label{assump:L}\ 
\begin{itemize} 
 \item The kernel of $L$  has a complementary subspace, i.e. a closed subspace  $V$ such that $\ker  L \oplus V = \B$.
 \item The range of $L$ is closed, and has a complementary subspace $W$, i.e., $\ran L \oplus W = \M$.
 \end{itemize}
\end{assumption}

An important special case of operators satisfying the assumption \ref{assump:L} are \emph{Fredholm operators} for which the space $W$ complementary to $\ran L$ is finite-dimensional, and $\ker L$ is itself finite dimensional, see e.g. \cite[Lemma 4.21]{rudin1991functional}.

The restriction $L|_V$ of $L : V \to \ran L$ is a bijective operator, and therefore has a continuous inverse $(L\vert_V)^{-1}$, by the continuous inverse theorem. With the help of this inverse, we can define a \emph{pseudoinverse} $L^+: \calM \to \B$ through
\begin{align*}
	L^+ = j_V (L\vert_V)^{-1} \Pi_{\ran L},
\end{align*}
where $j_V$ denotes the injection $V \hookrightarrow \B$ and $\Pi_{\ran L}$ the projection from $M$ to $\ran L$. Both of these as well as $(L\vert_V)^{-1}$ are continuous, so that $L^+$ is continuous.

We will furthermore have to restrict the functionals $a_i$ used to probe the signals slightly.
\begin{assumption}[Assumption on $a_i$] \label{assump:a} \

The functionals $a_i \in \B^*(\Omega)$ have the property that $(L^+)^*a_i \in \calC_0(\Omega)$. That is, there exist functions $\rho_i \in\calC_0(\Omega)$ with
	\begin{align*}
		\forall \mu \in \M: \, \sprod{(L^+)^*a_i, \mu}=\int_\Omega \rho_i(x) d\mu(x).
	\end{align*}
\end{assumption}
This assumption may seem a bit artificial, but we will see that it is crucial, both in the more theoretical first part of the paper, as well as in the second one dealing with the numerical resolution of the problem. Furthermore, it is equivalent to an assumption in the main result of \cite{unser2016splines}, as will be made explicit in the sequel. 

Until now, we have not touched upon the properties of the function $f_b$. We do this implicitly with the following condition:

\begin{assumption}[Solvability Assumption] \label{assump:solvability} The problem \eqref{eq:mainproblem} has at least one solution.
\end{assumption}

This assumption is of course necessary in order to make questions about the structure of the solutions of \eqref{eq:mainproblem} to make sense at all. A myriad of problems have this property, as the following simple proposition shows:

\begin{proposition} \label{prop:solvability}
	Assume that $f_b$ is lower semi-continuous and coercive (i.e. $\lim_{\norm{x}_2\to \infty} f(x) = \infty$), and that $f_b \circ A$ has a non-empty domain. Then, under assumptions \ref{assump:B}, \ref{assump:L} and \ref{assump:a}, the problem \eqref{eq:mainproblem} has a solution.
\end{proposition}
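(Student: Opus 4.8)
The plan is to run the direct method of the calculus of variations in the weak-$*$ topology of $\M(\Omega)=\calC_0(\Omega)^*$. First I would check that the infimum in \eqref{eq:mainproblem} is finite. Coercivity together with lower semi-continuity makes $f_b$ bounded below on $\R^m$: it is bounded below outside a ball by coercivity, and attains a finite infimum on the complementary compact ball by lower semi-continuity, so $f_b \ge c$ for some $c$. Hence $J(u) \ge c + \norm{Lu}_{TV} \ge c > -\infty$. Since $f_b\circ A$ has non-empty domain there is $\bar u \in \B$ with $f_b(A\bar u)<\infty$, and then $J(\bar u)=f_b(A\bar u)+\norm{L\bar u}_{TV}<\infty$ because $\bar u\in\B$ forces $\norm{L\bar u}_{TV}<\infty$. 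Thus $J^*:=\inf J\in\R$.

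Next I pick a minimizing sequence $u_n$ with, say, $J(u_n)\le J^*+1$. Then $\norm{Lu_n}_{TV}\le J(u_n)-c$ is bounded, and $f_b(Au_n)\le J^*+1$ forces $\set{Au_n}$ to be bounded in $\R^m$, since otherwise coercivity would push $f_b(Au_n)\to\infty$. I then use the splitting afforded by Assumption \ref{assump:L}: write $u_n=w_n+v_n$ with $w_n=(\Id-L^+L)u_n\in\ker L$ and $v_n=L^+Lu_n\in V$. As $L^+$ is continuous and $Lu_n=Lv_n=:\mu_n$, we get $\norm{v_n}\le\norm{L^+}\,\norm{\mu_n}_{TV}$ bounded in $\B$. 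Meanwhile $A(\ker L)$, a linear subspace of the finite dimensional space $\R^m$, is closed; since $Aw_n=Au_n-Av_n$ is bounded, a subsequence satisfies $Aw_n\to\beta\in A(\ker L)$, and we may fix $w^*\in\ker L$ with $Aw^*=\beta$.

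For the range component, $\mu_n=Lu_n$ is bounded in $\M(\Omega)$; since $\Omega$ is second countable, $\calC_0(\Omega)$ is separable and the closed ball of $\M(\Omega)$ is weak-$*$ sequentially compact, so along a further subsequence $\mu_n\wstarto\nu$. I propose the candidate $u^*=w^*+L^+\nu$. Assumption \ref{assump:a} is exactly what makes the data term behave: each component of $AL^+$ is $\mu\mapsto\int_\Omega\rho_i\,d\mu$ with $\rho_i\in\calC_0(\Omega)$, so $AL^+$ is weak-$*$ continuous and $Au_n=Aw_n+AL^+\mu_n\to\beta+AL^+\nu=Au^*$. Lower semi-continuity of $f_b$ gives $f_b(Au^*)\le\liminf f_b(Au_n)$, and weak-$*$ lower semi-continuity of $\norm{\cdot}_{TV}$ gives $\norm{\nu}_{TV}\le\liminf\norm{\mu_n}_{TV}$. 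Provided $\nu\in\ran L$, one has $Lu^*=LL^+\nu=\Pi_{\ran L}\nu=\nu$, and summing the two inequalities yields $J(u^*)\le\liminf J(u_n)=J^*$, so $u^*$ is a minimizer.

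The main obstacle is precisely the proviso $\nu\in\ran L$: a weak-$*$ limit of elements of a norm-closed (even complemented) subspace need not stay in it, so the closed-range half of Assumption \ref{assump:L} is not, on its own, weak-$*$ closedness. I would close this gap using the ambient distributional structure. Since $\ran L$ is norm closed, by the Krein--Smulian theorem together with weak-$*$ metrizability of bounded sets it suffices to prove sequential weak-$*$ closedness on bounded sets. The bounded sequence $v_n=L^+\mu_n$ lies in $\B\hookrightarrow\calD'(\Omega)$, and $\calD'(\Omega)$ is a Montel space, so bounded sets are relatively compact there; extracting $v_n\to v^*$ in $\calD'(\Omega)$ and invoking continuity of $L:\calD'(\Omega)\to\M(\Omega)$ identifies $\nu=Lv^*\in\ran L$ by uniqueness of limits. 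This distributional-compactness step, which reconciles the weak-$*$ topology of $\M(\Omega)$ with the range of $L$, is the delicate point; the rest is the routine bookkeeping of the direct method.
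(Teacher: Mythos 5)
Your proof is correct and follows essentially the same route as the paper's: a direct-method argument that splits a minimizing sequence into a kernel part and a part $L^+Lu_n$, extracts a weak-$*$ convergent subsequence of $\mu_n=Lu_n$ via Banach--Alaoglu and the separability of $\calC_0(\Omega)$, and passes to the limit using the weak-$*$--weak continuity of $AL^+$ (the paper's Lemma \ref{lem:weakStarWeak}, i.e.\ exactly Assumption \ref{assump:a}) together with lower semicontinuity of $f_b$ and of $\norm{\cdot}_{TV}$. The only substantive difference is that you explicitly flag whether the weak-$*$ limit $\nu$ remains in $\ran L$ --- a point the paper's proof passes over silently by taking $L^+\mu^*+u_K^*$ as its candidate, which still tacitly requires $\norm{LL^+\mu^*}_{TV}\leq\norm{\mu^*}_{TV}$ --- and your Montel-space patch (modulo the unstated continuity of the embedding $\B\hookrightarrow\calD'(\Omega)$ needed to make $(v_n)$ bounded in $\calD'(\Omega)$) is a reasonable way to close that gap, which is in any case vacuous in the main cases of interest where $\ran L=\M$.
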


The proof, which relies on standard arguments, can be found in Section \ref{sec:misc}. Let us here instead argue that the assumptions in  \ref{prop:solvability} are quite light and cover many common data fidelity terms as exemplified below.

\begin{description}
 \item[Equality constraints] This case corresponds to 
 \begin{equation}\label{eq:equalityconstraint}
  f_b(x)=\iota_{\{b\}}(x) = 
  \begin{cases}0 & \textrm{ if } x=b \\
  +\infty & \textrm{ otherwise.}
  \end{cases}  
 \end{equation}
This data fidelity term is commonly used when the data is not corrupted. A solution exists if $b\in \ran(A)$.
The two super-resolution papers \cite{candes2014towards,tang2013compressed} use this assumption.
 \item[Quadratic]  The case $f_b(x) = \frac{\lambda}{2}\|C^{-1}(x-b)\|_2^2$, where $\lambda>0$ is a data fit parameter, is commonly used when the data suffers from additive Gaussian noise with a covariance matrix $C$.
 \item[$\ell_1$-norm] When data suffers from outliers, it is common to set $f_b(x) = \lambda\|x-b\|_1$, with $\lambda>0$.
 \item[Box constraints] When the data is quantized, a natural data fidelity term is a box constraint of the following type 
 \begin{equation*}
f_b(x)=\begin{cases}
0 & \textrm{ if } \|C(x-b)\|_\infty \leq 1 \\ 
+\infty & \textrm{ otherwise,}                                                                   
\end{cases}
\end{equation*}
 where $C\in \R^{m\times m}$ is a diagonal matrix with positive entries.
%  \item[Poisson] When data is Poisson distributed, which is common in optics, the ``natural'' data fit reads:
%  \begin{cases}
%  f_b(x) = \sum_{x=1}^m x_i - b_i  
%  \end{cases}

\item[Phase Retrieval] Many non-convex functions $f_b$ fulfill our assumptions. In particular, any of the above fidelity terms can be combined with the (pointwise) absolute value to yield a feasible function $f_b$, i.e. for instance
 \begin{align*}
  f_b(x)=\iota_{\{b\}}(\abs{x}) = 
  \begin{cases}0 & \textrm{ if } \abs{x}=b \\
  +\infty & \textrm{ otherwise.}
  \end{cases}  .
 \end{align*}
 Such functions appear in the \emph{phase retrieval problem}, where one tries to reconstruct a signal $u$ from absolute values of type $\abs{Au}$.
 
\end{description}

\subsection{Structure of the solutions}

We are now ready to state the first important result of this paper.

\begin{theorem}\label{thm:structure}
	Under assumptions \ref{assump:B}, \ref{assump:L}, \ref{assump:a} and \ref{assump:solvability}, problem \eqref{eq:mainproblem} has a solution of the form
	\begin{align*}
		\hat{u} = u_K+ \sum_{k=1}^p d_k L^{+} \delta_{x_k},
	\end{align*}
	with $p\leq \overline{m} = m - \dim (A^* (\ker L))$, $u_K \in \ker L$, $d=(d_k)_{1\leq k \leq p}$ in  $\R^p$ and $X=(x_k)_{1\leq k \leq p}$ in $\Omega^p$.
\end{theorem}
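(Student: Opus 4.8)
The plan is to reduce \eqref{eq:mainproblem} to an equality-constrained total-variation minimization, to reparametrize it through the pseudoinverse so that it becomes a norm minimization on $\ran L$ subject to finitely many linear constraints, and finally to extract a sparse solution by an extreme-point argument.

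First I would fix the value of the data-fitting term. Let $\hat u$ be a solution, which exists by Assumption \ref{assump:solvability}, and set $t = A\hat u$. Since $u \mapsto f_b(Au)$ is constant on the affine set $\set{u : Au = t}$, the point $\hat u$ minimizes $\norm{Lu}_{TV}$ over that set, and conversely any minimizer $u^\star$ of $\min_{u} \norm{Lu}_{TV}$ subject to $Au = t$ satisfies $J(u^\star) = J(\hat u)$. It therefore suffices to produce a solution of the constrained problem that has the announced structure. I would then reparametrize: by Assumption \ref{assump:L} every $u \in \B$ decomposes uniquely as $u = u_K + L^+ \mu$ with $u_K \in \ker L$ and $\mu = Lu \in \ran L$, and $\norm{Lu}_{TV} = \norm{\mu}_{TV}$. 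Writing $R = A L^+$, Assumption \ref{assump:a} gives $(R\mu)_i = \int_\Omega \rho_i\, d\mu$ with $\rho_i \in \calC_0(\Omega)$, so in particular $R\delta_x = (\rho_i(x))_i$ and $R$ is weak-$*$ continuous. Setting $K = A(\ker L)$, the constraint $A u_K + R\mu = t$ is solvable in $u_K$ if and only if $\Pi_{K^\perp} R\mu = \Pi_{K^\perp} t$, a system of $\overline m = m - \dim K$ equations. The problem becomes
\begin{align*}
 \min_{\mu \in \ran L} \norm{\mu}_{TV} \quad \st \quad \Pi_{K^\perp} R\mu = \Pi_{K^\perp} t,
\end{align*}
and any compatible $u_K \in \ker L$ then recovers $u = u_K + L^+\mu$.

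To conclude I would argue by extreme points. The solution set of the reduced problem is convex and sits inside the ball $\set{\norm{\mu}_{TV} \le \norm{L\hat u}_{TV}}$, which is weak-$*$ compact by Banach--Alaoglu; Assumption \ref{assump:a} is essential here because it is exactly what makes the constraint $\Pi_{K^\perp} R\mu = \Pi_{K^\perp} t$ weak-$*$ closed. By Krein--Milman the solution set then has an extreme point $\mu^\star$. The heart of the matter is the lemma that such an extreme point is a combination of at most $\overline m$ atoms of the form $\Pi_{\ran L}\delta_{x_k} = L L^+ \delta_{x_k}$; writing $\mu^\star = \sum_{k=1}^p d_k \Pi_{\ran L}\delta_{x_k}$ and substituting into $u = u_K + L^+\mu^\star$ yields $\hat u = u_K + \sum_{k=1}^p d_k L^+ \delta_{x_k}$ with $p \le \overline m$, which is the claimed form. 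I would prove the lemma by a Carath\'eodory/support-reduction argument: if $\mu^\star$ had more than $\overline m$ atoms, one could build a nonzero perturbation supported on them, lying in $\ran L$ and annihilating the $\overline m$ linear constraints while preserving $\norm{\cdot}_{TV}$, contradicting extremality.

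The main obstacle is precisely this last lemma, and more specifically the requirement that the perturbations — equivalently the atoms — live in $\ran L$. In the classical case $L = \Id$, one has $\ran L = \M$, the extreme points of the total-variation ball are the Diracs $\pm\delta_x$, and the count follows from the finite-dimensional linear algebra of the $\overline m$ constraints. For a general non-surjective $L$ the Diracs do not belong to $\ran L$, and one is forced to work with the projected atoms $\Pi_{\ran L}\delta_x$, reconciling the Dirac extreme-point structure of $\M$ with the range constraint through the pseudoinverse. Carrying out the support reduction constructively inside $\ran L$ — keeping the number of atoms governed by the number $\overline m$ of effective measurements rather than by the codimension of $\ran L$, and verifying that the intersection of the weak-$*$ compact ball with the subspace $\ran L$ behaves well — is the delicate point on which the whole argument rests.
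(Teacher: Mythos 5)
Your Steps 1 and 2 coincide with the paper's, with one consequential deviation: the paper's reduced problem \eqref{eq:reducedProblem} minimizes $\norm{\mu}_{TV}$ over \emph{all} of $\M$, not over $\ran L$. This is deliberate and harmless, because $L^+ = L^+\Pi_{\ran L}$, so a sparse minimizer $\hat\mu=\sum_k d_k\delta_{x_k}$ taken in $\M$ still produces a candidate $u_K + L^+\hat\mu$ of exactly the claimed form. By restricting to $\ran L$ you manufacture the very obstacle you then describe as ``the delicate point on which the whole argument rests,'' and two things break there. First, $\ran L$ is norm-closed by Assumption \ref{assump:L} but need not be weak-$*$ closed, so the solution set of your restricted problem need not be weak-$*$ compact and Krein--Milman need not apply. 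Second, your key lemma --- that an extreme point of that solution set is a combination of at most $\overline{m}$ ``projected atoms'' $\Pi_{\ran L}\delta_x$ --- is asserted rather than proved, and the sketched support-reduction argument (``if $\mu^\star$ had more than $\overline{m}$ atoms\dots'') presupposes that $\mu^\star$ is already purely atomic. The crux of any extreme-point proof here is to exclude a \emph{diffuse} part of $\mu^\star$, and nothing in your outline addresses this; note also that $\Pi_{\ran L}\delta_x$ is in general not a point mass, so it is unclear in what sense these objects would arise as extreme points at all.

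Beyond this gap, your route (Krein--Milman plus an extreme-point characterization) is the classical one of \cite{fisher1975spline}, which the paper cites and deliberately does not follow. The paper's Step 3 is instead constructive: it discretizes $\Omega$ on dyadic grids (Lemma \ref{lem:discretization}), solves the resulting finite-dimensional $\ell_1$ problems --- where $\overline{m}$-sparsity follows from a Carath\'eodory argument on a face of the image polytope (Lemma \ref{lem:finiteDimensionalStructure}) --- and passes to the weak-$*$ limit using the weak-$*$--weak continuity of $AL^+$ provided by Assumption \ref{assump:a} (Lemma \ref{lem:weakStarWeak}) together with the lower semicontinuity of the $TV$-norm. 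If you wish to complete your own argument, drop the constraint $\mu\in\ran L$, work with genuine Diracs, and supply a full proof that the extreme points of the (weak-$*$ compact, convex) solution set of \eqref{eq:reducedProblem} are $\overline{m}$-atomic, including the step that rules out non-atomic components.
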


The proof of this theorem consists of three main steps. We provide the first two below, since they are elementary and provide some insight on the theorem. The last step appears in many works. We provide an original proof in the appendix.

\begin{proof}
\ 

\noindent\textbf{Step 1:} In this step, we transform the data fitting $f_b$ into an equality constraint.
 To see why this is possible, let $\overline{u}$ be a solution of the problem \eqref{eq:mainproblem}. Then any solution of the problem
\begin{align*}
	\min_{u \in \B} \norm{Lu}_{TV}  \st Au = A\overline{u}=:y
\end{align*}
will also be a solution $\hat u$ of \eqref{eq:mainproblem}, since it satisfies $f_b(A\hat u)=f_b(A\overline{u})$ and $\norm{L\overline{u}}_{TV}=\norm{L\hat u}_{TV}$. Those two equalities are required, otherwise, $\overline{u}$ would not be a solution since $J(\hat u)<J(\overline{u})$.

\paragraph{Step 2:} In this step, we show that it is possible to discard the operator $L$. 
To see this, notice that since every $u \in \B$ can be written as $L^+\mu + u_K$ with $\mu \in \calM$ and $u_K \in \ker L$. Therefore, we have
 \begin{align*}
 &\left(\min_{u\in \B} \|Lu\|_{TV} \st Au=y \right) \\
  & \quad= \left(\min_{\substack{u_K\in \ker(L), \mu\in \M  }} \|\mu\|_{TV} \st A(u_K+L^+\mu)=y \right)
 \end{align*}
 
 Now, set $X=A\ker(L)$. Since $X$ is a finite-dimensional subspace of $\R^m$, we may decompose $y=y_X+y_{X^\perp}$, with $y_X \in X$ and $y_{X^\perp} \in X^\perp$, the orthogonal complement of $X$ in $\R^m$. Notice that for every $\mu \in \M$, there exists a $u_K \in \ker L$ with $A(u_K+L^+\mu)=y$ if and only if $\Pi_{X^\perp} AL^+\mu = y_{X^\perp}$. Hence, the above problems can be simplified as follows
 \begin{equation} \label{eq:reducedProblem}
	 \min \norm{\mu}_{TV} \st H\mu = y_{X^\perp},
 \end{equation}
with  $H: \M \to X^\perp$, $H= \Pi_{X^\perp} A^*L^+$, with $\dim X^\perp= \overline{m}$.

\paragraph{Step 3:} The last step consists in proving that the problem \eqref{eq:reducedProblem} has a solution of the form $\sum_{k=1}^p d_k \delta_{x_k}$. 
This result is well-known when $\Omega$ is a countable set, see e.g. \cite{unser2016representer}.
It is also available in infinite dimensions on compact domains. We refer to \cite{fisher1975spline} for instance, for an early proof, based on the Krein-Milmann theorem. 
We propose an alternative strategy in the appendix based on a discretization procedure.
\end{proof}

\begin{remark}
In \cite{fisher1975spline,unser2016splines}, the authors further show that the extremal points of the solution set are of the form given in Theorem \ref{thm:structure}, if $f_b$ is the indicator function of a closed convex set. Their argument is based on a proof by contradiction. Following this approach, it is possible to prove the same result in our setting. We choose not to carry out the details about this since we also wish to cover nonconvex problems.
\end{remark}

Before going further, let us show some consequences of this theorem.

\subsubsection{Example 1: $L=\Id$ and the space $\M$}\label{subsec:example1}

Probably the easiest case consists in choosing an arbitrary open domain $\Omega\subseteq \R^d$, to set $\B = \M(\Omega)$ and $L=\Id$. In this case, all the assumptions \ref{assump:L} on $L$ are trivially met. We have $\ran \, \Id= \M(\Omega)$, $\ker \, \Id = \set{0}$ and $\Id^+=\Id$. Therefore, Theorem \ref{thm:structure} in this specific case guarantees the existence of a minimizer of \eqref{eq:mainproblem} of the form
\begin{align*}
		\hat{\mu} = \sum_{k=1}^p d_k \delta_{x_k},
\end{align*}
with $p\leq m$.
The assumption \ref{assump:a} in this case simply means that the functionals $a_i$ can be identified with continuous operators vanishing at infinity.

Note that the synthesis formulation \eqref{eq:synthesis} can be seen as a subcase of this setting. The structure of the minimizing measure in Theorem \ref{thm:structure} implies that the signal estimate $\hat u$ has the following form
\begin{align*}
	\hat{u} = D\hat{\mu} = \sum_{k=1}^p d_k D\delta_{x_k}.
\end{align*}
The vectors $(D\delta_{x})_{x \in \Omega}$ can naturally be interpreted as the atoms of a dictionary. Hence, Theorem \ref{thm:structure} states that there will always exist at least one estimate from the synthesis formulation which is sparsely representable in the dictionary $(D\delta_{x})_{x \in \Omega}$.

\subsubsection{Example 2: Spline-admissible operators and their native spaces}
The authors of \cite{unser2016splines} consider a generic operator $L$ defined on the space of tempered distributions $\mathcal{S}'(\R^d)$  and mapping into $\M(\R^d)$, which is
\begin{itemize}
\item Shift-invariant, 
\item  for which there exists a function $\rho_L$ (a generalized spline) of polynomial growth, say
\begin{align} \label{eq:growth}
	\esssup_{x \in \R^d} \abs{\rho_L(x)} (1+\norm{x})^{-r}<+\infty,
\end{align} obeying $L\rho_L = \delta_0$.
\item The space of functions in the kernel of $L$ obeying the growth estimate \eqref{eq:growth} is finite dimensional.
\end{itemize} 
The authors call such operators \emph{spline-admissible}. A typical example is the distributional derivative $D$ on $\Omega=\R$.
For each such operator $L$, they define a space $\M_L(\R^d)$ as the set of functions $f$ obeying the growth estimate \eqref{eq:growth} while still having the property $Lf \in \M(\R^d)$. The norm on $\M_L$ is as in our formulation, whereby $\norm{\cdot}_K$ is defined through a dual basis of a (finite) basis of $\ker L$.

They go on to prove that $\M_L(\R^d)$ is a Banach space, which has a separable predual $\calC_L(\R^d)$, and (in our notation) assume that the functionals $a_i \in M_L^*(\R^d)$ can be identified with elements of $\calC_L(\R^d)$.

It turns out that using this construction, the operator $L$ and functionals $(a_i)$ obey the assumptions \ref{assump:L} and \ref{assump:a}, respectively.

\begin{proposition} \label{prop:Unser} \ 
\begin{itemize}
	\item The operator $L : \M_L(\R^d) \to \M(\R^d)$ is Fredholm. In fact, $\ran L$ is even equal to $\M(\R^d)$.
	\item The functionals $a_i\in M_L^*(\R^d)$ obey assumption \ref{assump:a}. In fact, we even have
	\begin{align*}
		(L^+)^*a \in \calC_0(\R^d) \ \Longleftrightarrow \ a \in \calC_L(\R^d).
	\end{align*}
	\end{itemize}
\end{proposition}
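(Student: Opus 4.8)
The plan is to treat the two bullet points in turn, reducing each to the structural facts Unser--Fageot--Ward establish for $\M_L$ and its predual $\calC_L$, together with the defining properties of a spline-admissible $L$: shift-invariance, the existence of $\rho_L$ with $L\rho_L=\delta_0$, and the finite-dimensionality of the kernel part of polynomial growth.

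\textbf{The Fredholm property.} First I would identify $\ker L$ inside $\M_L$: by definition every element of $\M_L$ obeys the growth estimate \eqref{eq:growth}, so $\ker L$ is exactly the space of kernel functions of polynomial growth, which is finite dimensional by spline-admissibility. A finite-dimensional subspace is closed and admits a closed complement $V$ (Rudin, Lemma 4.21), which gives the first bullet of Assumption \ref{assump:L}. The heart of this part is surjectivity, for which I would build a right inverse $R:\M\to\M_L$ by convolving against the Green's function: since $L$ is shift-invariant and $L\rho_L=\delta_0$, one has $L(\rho_L(\cdot-y))=\delta_y$, hence formally $L(\rho_L*\mu)=\mu$. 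The obstacle is that $\rho_L*\mu$ need not obey \eqref{eq:growth}, since a finite measure may have infinite moments (already for $L=D^2$, $\rho_L(x)=x_+$, a heavy-tailed $\mu$ makes $\rho_L*\mu$ grow too fast or even diverge). This is precisely why one passes to a stabilized Green's function $g_L(\cdot,y)=\rho_L(\cdot-y)-\sum_k p_k(\cdot)\langle\phi_k,\rho_L(\cdot-y)\rangle$, where $\{p_k\}$ is a basis of $\ker L$ and $\{\phi_k\}$ a biorthogonal system; subtracting the kernel component leaves $Lg_L(\cdot,y)=\delta_y$ unchanged but restores the growth/decay needed for $R\mu=\int g_L(\cdot,y)\,d\mu(y)$ to converge in $\M_L$. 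Then $LR=\Id_\M$ yields $\ran L=\M$; the range is closed (it is everything) and its complement $W=\{0\}$ is finite dimensional, so $L$ is Fredholm and Assumption \ref{assump:L} holds.

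\textbf{The functional condition.} Because $\ran L=\M$, the projection $\Pi_{\ran L}$ is the identity and $L^+=j_V(L|_V)^{-1}$ is simply the right inverse of $L$ taking values in $V$, so that $LL^+=\Id_\M$ while $L^+L$ is the projection of $\M_L$ onto $V$ along $\ker L$. I would then prove the equivalence by exploiting that, in the UFW construction, $\calC_L$ is the \emph{predual} of $\M_L$ and that both $L:\M_L\to\M$ and $L^+:\M\to\M_L$ are weak-$*$ continuous, i.e. adjoints of operators between the preduals $\calC_0$ and $\calC_L$. For the forward implication, let $a\in\calC_L\subset\M_L^*$, so that $a$ is weak-$*$ continuous on $\M_L$; then $(L^+)^*a=a\circ L^+$ is a composition of weak-$*$ continuous maps, hence a weak-$*$ continuous functional on $\M$, which is exactly to say that it is represented by a function $\rho\in\calC_0$, giving Assumption \ref{assump:a}. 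For the converse, suppose $(L^+)^*a=\rho\in\calC_0$. Writing any $f\in\M_L$ as $f=L^+(Lf)+f_K$ with $f_K\in\ker L$, I get $\langle a,f\rangle=\langle(L^+)^*a,Lf\rangle+\langle a,f_K\rangle=\int\rho\,d(Lf)+\langle a,f_K\rangle$. The first term is weak-$*$ continuous in $f$ because $L$ is weak-$*$ continuous and $\rho\in\calC_0$; the second is a functional on the finite-dimensional space $\ker L$ composed with the (weak-$*$ continuous) kernel projection $\Id-L^+L$. Hence $a$ is weak-$*$ continuous, i.e. $a\in\calC_L$.

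The main obstacle is twofold. In the first part it is the growth control of the preimage, which forces the stabilized Green's function rather than a plain convolution. In the second part it is verifying that $L$, $L^+$, and the kernel projection are genuinely weak-$*$ continuous, i.e. adjoints at the level of the preduals; this is where the specific construction of $\calC_L$ as the predual of $\M_L$ is indispensable, and it is the only step that cannot be read off from the abstract assumptions alone.
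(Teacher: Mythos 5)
Your treatment of the first bullet matches the paper's in substance: the paper simply cites Theorems 4 and 5 of \cite{unser2016splines} for the existence of a right inverse $L_\Phi^{-1}$ (which is exactly the stabilized Green's function operator you describe) and concludes $\ran L=\M$ and the Fredholm property from the finite dimensionality of $\ker L$. Your sketch opens that black box without carrying out the growth estimates needed for $R\mu$ to land in $\M_L$, which is acceptable at the level of a plan since the paper defers the same work to the cited reference.

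The second bullet contains a genuine gap. You reduce the forward implication ($a\in\calC_L\Rightarrow(L^+)^*a\in\calC_0$) to the weak-$*$ continuity of $L^+:\M\to\M_L$. But for a bounded linear map between dual spaces, being weak-$*$-to-weak-$*$ continuous is \emph{equivalent} to being an adjoint, i.e.\ to the statement that its Banach-space adjoint maps the predual into the predual --- which here reads precisely $(L^+)^*(\calC_L)\subseteq\calC_0$. So your forward direction restates the claim rather than proving it, and you offer no independent route to the weak-$*$ continuity of $L^+$; you only flag it as the step that ``cannot be read off from the abstract assumptions.'' The paper avoids this circle by using the concrete description $\calC_L=L^*(\calC_0(\R^d))+\vect(\phi_i)_{i=1}^r$ from Theorem 6 of \cite{unser2016splines}, normalizing $\phi_i\vert_V=0$, and computing $(L^+)^*\bigl(L^*\rho+\sum_i\gamma_i\phi_i\bigr)=\rho$ via the identities $(L^+)^*L^*=(LL^+)^*=\Id$ and $(L^+)^*\phi_i=0$. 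Your converse direction is sound and in fact coincides with the paper's: $\Pi_V^*a=L^*(L^+)^*a\in L^*\calC_0(\R^d)$ and $\Pi_{\ker L}^*a\in\vect(\phi_i)_{i=1}^r$, hence $a\in\calC_L$ --- but note that this too uses the explicit form of $\calC_L$, not merely the fact that it is a predual. To repair the forward direction, import that explicit form and perform the paper's two-line computation.
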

Hence, the assumptions in \cite{unser2016splines} are a special case of the ones used in this paper.

\subsubsection{Example 3: More general differential operators and associated spaces} \label{ex:DiffOp}
The inclusion of operators with infinite dimensional kernel allows us to treat differential operators in a bit more streamlined way than above, in particular removing the restricted growth conditions. Let $\Omega$ be an open subset of $\R^d$ and $P(D)$ a differential operator on $\Omega$, i.e. an expression of the form
\begin{align*}
	P(D) = \sum_{\abs{\alpha} \leq K} p_\alpha D^\alpha,
\end{align*}
where $D^\alpha = \partial_{\alpha_1} \cdot \partial_{\alpha_j}$ is a partial derivative operator and $p_\alpha$ are measurable functions on $\Omega$. Note that $P(D)$ does not need to be shift invariant (if $\Omega \neq \R^d$, shift-invariance is not even possible to define).

In order to define the norm of functions in the kernel  of $L=P(D)$ properly, which we will not assume to satisfy any growth conditions, we assume that there exists a bounded subset $K \sse \Omega$ with the following \emph{continuation property:}
\begin{assumption}[Continuation property] \ \label{assumption:continuation}
For each distribution $u \in \calD'(K)$ with $P(D)u=0$, there exists exactly one $\widehat{u} \in \calD'(\Omega)$ with $P(D)\widehat{u} = 0$ in $\Omega$ and $\widehat{u}=u$ in $K$. 
 \end{assumption}
 We will see that for a large class of elliptic operators, we can choose $K$ to be any bounded set with non-empty interior and smooth boundary. These conditions will furthermore in particular prove that $\norm{\cdot }_{\calM(K)}$ is a seminorm on a space $\B$, which restricted to $\ker P(D)$ is a norm.

The fundamental assumption we will make is the following:
\begin{assumption}[Green function hypothesis] \label{assumption:PD} \
For each $x \in \Omega$, there exists a solution $u_x \in \calC(\Omega)$ of the problem
\begin{align}	\label{eq:fundamentalSolution}
P(D)u_x = \delta_x.
\end{align}
We also assume that the map $\Omega \ni x \mapsto u_x \in \calC(\Omega)$ is continuous and bounded, i.e. $\sup_{x \in \Omega} \norm{u_x}_\infty < \infty$.
\end{assumption}

Now we define, inspired by the native spaces $\calM_L$ from above, a space $\B_P$, which $P(D)$ naturally sends to $\calM(\Omega)$:
\begin{align*}
	\B_P = \set{u \in \calD'(\Omega) \, \vert \, P(D) u \in \calM(\Omega), u\vert_K \in \M(K)}.
\end{align*}

\begin{lemma} \label{lem:BP}
	Under assumptions \ref{assumption:continuation} and \ref{assumption:PD}, the following holds: The expression
	\begin{align*}
		\norm{u}_{\B_P} = \norm{ P(D) u }_{TV}+ \norm{u \vert_K}_{TV}
	\end{align*}
	defines a norm on $\B_P$. $\B_P$ equipped with this norm is a Banach space, i.e., satisfies assumption \ref{assump:B}.
\end{lemma}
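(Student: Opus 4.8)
The plan is to separate the statement into two parts: that $\norm{\cdot}_{\B_P}$ is a genuine norm (not merely a seminorm), and that $\B_P$ is complete. Subadditivity and homogeneity are immediate, since $P(D)$ and the restriction $u \mapsto u\vert_K$ are linear and $\norm{\cdot}_{TV}$ is a norm on $\M(\Omega)$ and $\M(K)$; both summands are finite on $\B_P$ by the very definition of the space. The only nontrivial point is positive-definiteness. Suppose $\norm{u}_{\B_P}=0$. Then $\norm{P(D)u}_{TV}=0$ and $\norm{u\vert_K}_{TV}=0$, so $P(D)u=0$ on $\Omega$ and $u\vert_K=0$. Thus $u$ is a distributional solution of $P(D)u=0$ whose restriction to $K$ vanishes; since the zero distribution is another such global solution with the same vanishing restriction, the uniqueness part of the continuation property (Assumption \ref{assumption:continuation}) forces $u=0$. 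The same argument shows $\norm{u\vert_K}_{TV}$ is a norm on $\ker P(D)$.

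For completeness I would realize $\B_P$ as an isometric copy of a subspace of a Banach space and show that subspace is closed. Consider the linear map $T:\B_P \to \M(\Omega)\times\M(K)$, $Tu=(P(D)u,\,u\vert_K)$, where the product carries $\norm{(\nu,\sigma)}=\norm{\nu}_{TV}+\norm{\sigma}_{TV}$. By construction $T$ is an isometry, and it is injective by the first part, so $\B_P$ is complete if and only if $\ran T$ is closed. Since $(\M(\Omega),\norm{\cdot}_{TV})$ and $(\M(K),\norm{\cdot}_{TV})$ are Banach spaces, the product is Banach, and it remains to identify $\ran T$ and check it is closed.

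The heart of the matter is to characterize $\ran T$ as the set of compatible pairs $\set{(\nu,\sigma) : P(D)\sigma = \nu\vert_{\mathrm{int}\,K} \text{ in } \calD'(\mathrm{int}\,K)}$, and this is where the Green function hypothesis (Assumption \ref{assumption:PD}) enters. For any $\nu\in\M(\Omega)$ I would set $w_\nu=\int_\Omega u_x\,d\nu(x)$, interpreted as a weak (Bochner) integral in $C_b(\Omega)$: the uniform bound $\sup_x\norm{u_x}_\infty<\infty$ together with continuity of $x\mapsto u_x$ makes this well-defined with $\norm{w_\nu}_\infty \leq (\sup_x\norm{u_x}_\infty)\,\norm{\nu}_{TV}$, so $w_\nu\in C_b(\Omega)$ and $w_\nu\vert_K\in\M(K)$, while testing against $\calD(\Omega)$ and Fubini give $P(D)w_\nu=\int_\Omega\delta_x\,d\nu(x)=\nu$; hence $w_\nu\in\B_P$ and $(\nu,w_\nu\vert_K)\in\ran T$. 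A pair $(\nu,\sigma)$ then lies in $\ran T$ precisely when $\sigma-w_\nu\vert_K$ is the restriction to $K$ of a global kernel element; under the compatibility condition $P(D)(\sigma-w_\nu\vert_K)=\nu\vert_{\mathrm{int}\,K}-\nu\vert_{\mathrm{int}\,K}=0$ on $\mathrm{int}\,K$, so the existence part of the continuation property produces an extension $\widehat v$ and $u=w_\nu+\widehat v\in\B_P$ satisfies $Tu=(\nu,\sigma)$; conversely every element of $\ran T$ trivially satisfies the constraint. Closedness is then immediate: TV-convergence implies convergence in $\calD'$, and both $\sigma\mapsto P(D)\sigma$ and $\nu\mapsto\nu\vert_{\mathrm{int}\,K}$ are $\calD'$-continuous, so the defining linear constraint passes to the limit.

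The main obstacle is making the Green-function particular solution $w_\nu=\int_\Omega u_x\,d\nu(x)$ fully rigorous: establishing existence of the vector-valued integral in a suitable function space, that it defines a distribution whose restriction to $K$ is a finite measure, and above all that $P(D)$ commutes with the integral so that $P(D)w_\nu=\nu$. Everything else is bookkeeping, but this interchange — and the attendant care with the meaning of distributions and restrictions on $K$ versus its interior — is exactly where Assumption \ref{assumption:PD} does the real work.
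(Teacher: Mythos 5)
Your proposal is correct and follows essentially the same route as the paper: both embed $\B_P$ isometrically into $\M(\Omega)\times\M(K)$, both build the particular solution $\int_\Omega u_x\,d\nu(x)$ (the paper's $L^+\nu$ from its preparatory Lemma \ref{lem:LPlusBP}, with the same Fubini argument for $P(D)L^+\nu=\nu$), and both invoke the continuation property once for positive-definiteness and once to correct the particular solution so that the limit pair lies in the range. The only difference is organizational — you characterize the range as the closed set of compatible pairs and then note the constraint passes to the limit, whereas the paper runs the limit argument directly on a convergent sequence — but the mathematical content is identical.
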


 We now prove that \ref{assumption:PD} implies that $P(D)$ obeys the assumption \ref{assump:L}, and state a more specific one which implies that relatively general functionals $a$ obey assumption \ref{assump:a}. To simplify the formulation of it slightly, let us introduce the following notion: we say that a mapping $T : \Omega \to \calC(\Omega)$ vanishes at infinity on compact sets if for each compact subset $C \sse \Omega$, the function $x \mapsto \sup_{y \in C} \abs{T(x)(y)}$ vanishes at infinity.

\begin{proposition} \label{prop:DiffOp}
Under assumption \ref{assumption:PD}, $L=P(D)$ satisfies assumptions \ref{assump:L}. In particular, $\ran L= \M$, and the operator $L^+$ is given by
\begin{align} \label{eq:LPlusDiffOp}
	(L^+ \mu)(x) = \int_\Omega u_y (x) d\mu(y)
\end{align}
Furthermore, if $a$ is a functional of the type
\begin{align} \label{eq:afunctional}
	\sprod{a, u} = \int_{\Omega} a(x) u(x) dx,
\end{align}
with $a \in L^1(\Omega)$, we have
\begin{align*}
	((L^+)^*a) (x) = \int_{\Omega} u_{x}(y) a_i(y) dy
\end{align*}
$(L^+)^*a$ obeys the assumption \ref{assump:a} provided the map $ x \mapsto u_x$ vanishes at infinity on compact sets.
\end{proposition}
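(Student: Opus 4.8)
The plan is to build everything around the candidate operator $R : \M \to \B_P$ defined by $(R\mu)(x) = \int_\Omega u_y(x)\, d\mu(y)$, to show that $R$ is a bounded right inverse of $L = P(D)$, and then to read off the four claims in sequence. First I would check that $R$ is well defined and maps into $\B_P$. For fixed $x$ the integrand $y \mapsto u_y(x)$ is bounded by $\sup_z \norm{u_z}_\infty < \infty$ (Assumption \ref{assumption:PD}) and is measurable since $y \mapsto u_y$ is continuous into $\calC(\Omega)$; dominated convergence then gives that $R\mu$ is itself a bounded continuous function. To place $R\mu$ in $\B_P$ I would use the norm from Lemma \ref{lem:BP}, $\norm{R\mu}_{\B_P} = \norm{P(D)R\mu}_{TV} + \norm{(R\mu)\vert_K}_{TV}$: because $K$ is bounded and $R\mu$ is a bounded continuous function, the second term is controlled by $\abs{K}\,\sup_z\norm{u_z}_\infty\,\norm{\mu}_{TV}$, while the first term equals $\norm{\mu}_{TV}$ once the next step is done, so $R$ is bounded.

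The central step is to prove $P(D)R\mu = \mu$ in $\calD'(\Omega)$, which simultaneously yields $\ran L = \M$. Testing against $\varphi \in \calD(\Omega)$ and transferring $P(D)$ onto $\varphi$ via its formal adjoint, I would apply Fubini (justified by $\norm{\mu}_{TV} < \infty$, the uniform bound on $u_y$, and the compact support of $\varphi$) to exchange the $x$-integration with the $\mu$-integration, and then use $P(D)u_y = \delta_y$ to collapse the inner pairing to $\varphi(y)$, obtaining $\int_\Omega \varphi\, d\mu = \sprod{\mu,\varphi}$. This distributional Fubini interchange, keeping the differential operator acting only on the smooth compactly supported test function, is the one genuinely delicate point. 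With $LR = \Id_\M$ established, $\ran L = \M$ is immediate; hence $\ran L$ is trivially closed and $W = \set{0}$ is a complement. For the kernel I would note that $RL$ is bounded and idempotent on $\B_P$ (since $RLRL = R(LR)L = RL$), hence a continuous projection with closed range $V := \ran(RL) = \ran R$; the decomposition $u = RLu + (u - RLu)$ with $u - RLu \in \ker L$ gives $\B_P = V \oplus \ker L$ and $V \cap \ker L = \set{0}$. This verifies Assumption \ref{assump:L}, and comparing with $L^+ = j_V(L\vert_V)^{-1}\Pi_{\ran L}$ (where $\Pi_{\ran L} = \Id$) identifies $L^+ = R$, which is exactly formula \eqref{eq:LPlusDiffOp}.

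For the last two claims I would compute the adjoint directly. For $a \in L^1(\Omega)$ as in \eqref{eq:afunctional} and $\mu \in \M$, expanding $\sprod{(L^+)^*a,\mu} = \sprod{a, L^+\mu} = \int_\Omega a(x)\int_\Omega u_y(x)\,d\mu(y)\,dx$ and applying Fubini once more (justified now by $a \in L^1$ together with the uniform bound on $u_y$) gives $((L^+)^*a)(x) = \int_\Omega u_x(y)\,a(y)\,dy$, the stated formula. It then remains to check $(L^+)^*a \in \calC_0(\Omega)$. Continuity follows from dominated convergence: if $x_n \to x$ then $u_{x_n}(y) \to u_x(y)$ pointwise with integrable dominating function $\sup_z\norm{u_z}_\infty\,\abs{a(\cdot)}$. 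The vanishing at infinity is where the extra hypothesis enters: given $\epsilon > 0$, choose a compact $C \sse \Omega$ with $\int_{\Omega\setminus C}\abs{a} < \epsilon$, split the integral over $C$ and its complement, bound the tail by $\sup_z\norm{u_z}_\infty\,\epsilon$, and bound the compact part by $\big(\sup_{y\in C}\abs{u_x(y)}\big)\norm{a}_{L^1}$, which tends to $0$ as $x$ leaves every compact set precisely because $x \mapsto u_x$ vanishes at infinity on compact sets. I expect this tail-splitting estimate and the earlier distributional Fubini step to be the two places requiring care; the remainder is bookkeeping with bounded linear operators.
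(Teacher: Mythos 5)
Your proposal is correct and follows essentially the same route as the paper: define the candidate inverse by the integral formula, verify $P(D)R=\Id_{\M}$ by a Fubini interchange against test functions using $P(D)u_y=\delta_y$, deduce surjectivity and the complement of $\ker L$, and compute $(L^+)^*a$ with a second Fubini step followed by the tail-splitting estimate for membership in $\calC_0(\Omega)$. The only (cosmetic) difference is that you obtain the closed complement of $\ker L$ by observing that $RL$ is a continuous idempotent, whereas the paper proves closedness of $\ran L^+$ by a direct sequential argument; these are equivalent.
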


\begin{remark}
	Since we have assumed no growth restriction on the elements of $\ker P(D)$, in general, not every function $a \in L^1(\Omega)$ will cause \eqref{eq:afunctional} to define a functional on $\B_P$. However, \emph{if} this is the case for a \emph{specific} $a$, $(L^+)^*a$ will be well-defined and have the claimed form.
	
	An example of an additional assumption which will make \eqref{eq:afunctional} actually define a functional on $\B_P$ is that $a$ is continuous and has compact support inside $K$, since then
	\begin{align*}
		\abs{\sprod{a, u}} \leq \norm{a}_\infty \norm{u \vert_K}_{TV} \leq \norm{a}_\infty \norm{u}_{\B_P}.
	\end{align*}
\end{remark}

Let us now give a relatively general example of operators which satisfy the properties presented in Proposition \ref{prop:DiffOp}. It for instance includes poly-Laplacian operators $\Delta^k$ of sufficiently high order on for $\Omega \sse \R^d$, either bounded or equal to the entire space $\R^n$.

Let $k \in \N$ and $P(D)$ be a differential operator on $\Omega$   of the form
\begin{align} \label{eq:Elliptic}
	P(D) = \sum_{\abs{\alpha} = k} \sum_{\abs{\beta}=k} D^\beta(p_{\alpha,\beta}(x) D^\alpha)
\end{align}
for some bounded functions $p_{\alpha, \beta}\in \calC^k(\Omega)$. Also assume that $P(D)$ obeys the following ellipticity condition
\begin{align}
	\inf_{x \in \Omega} \inf_{\norm{\xi}_2=1} \sum_{\abs{\alpha}=k} \sum_{\abs{\beta}=k} p_{\alpha,\beta}(x) \xi^{\alpha+\beta}=: C >0. \label{eq:ellipticity}
\end{align}
\begin{proposition} \label{prop:Elliptic}
 Suppose that $k > \frac{d}{2}$. For either $\Omega$ bounded with Lipschitz domain or $\Omega=\R^d$, the following is true. Under the ellipticity assumption \eqref{eq:ellipticity}, the problem \eqref{eq:fundamentalSolution} admits for each $x \in \Omega$ a solution $u_x \in \calC(\Omega)$. The map $x \to u_x$ is furthermore vanishing at infinity on compact sets.
 
 Also, any set $K$  with non-empty interior and smooth domain obeys assumption \ref{assumption:continuation}.
\end{proposition}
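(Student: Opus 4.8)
The plan is to recognize $L=P(D)$ from \eqref{eq:Elliptic} as the operator associated, via integration by parts, to the symmetric bilinear form $a(u,v)=\sum_{\abs{\alpha}=k}\sum_{\abs{\beta}=k}\int_\Omega p_{\alpha,\beta}(x)\,D^\alpha u\,D^\beta v\,dx$, and to produce $u_x$ by solving \eqref{eq:fundamentalSolution} variationally. The ellipticity condition \eqref{eq:ellipticity} says precisely that the principal symbol $\sum_{\abs\alpha=\abs\beta=k}p_{\alpha,\beta}(x)\xi^{\alpha+\beta}$ is bounded below by $C\norm{\xi}_2^{2k}$ on the rank-one set $\eta_\alpha=\xi^\alpha$, i.e. the Legendre--Hadamard condition holds; since the cross terms prevent pointwise coercivity for arbitrary jets, the G\aa rding inequality is the necessary tool, and it makes $a$ coercive on $H^k_0(\Omega)$ in the bounded Lipschitz case (after a Poincar\'e step, the top-order seminorm controlling the full norm) and on the homogeneous space $\dot H^k(\R^d)$ when $\Omega=\R^d$. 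The decisive use of the hypothesis $k>\tfrac d2$ is the Morrey--Sobolev embedding $H^k\hookrightarrow\calC^{0,\gamma}$ with $\gamma=\min(1,k-\tfrac d2)>0$: it makes point evaluation $v\mapsto v(x)$ a bounded functional, i.e. $\delta_x\in H^{-k}(\Omega)$. The Lax--Milgram theorem then yields a unique weak solution $u_x$ of $a(u_x,\cdot)=\delta_x$, which is exactly \eqref{eq:fundamentalSolution} in $\calD'(\Omega)$, and the embedding places $u_x\in\calC(\Omega)$.

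First I would read off the regularity of $x\mapsto u_x$ from the same estimates. The bound $\norm{u_x}_{H^k}\le C\norm{\delta_x}_{H^{-k}}$ together with the uniform (in $x$) control of $\norm{\delta_x}_{H^{-k}}$ by the embedding constant gives $\sup_{x}\norm{u_x}_\infty<\infty$. Subtracting the equations for $x$ and $x'$ and using the H\"older embedding gives $\norm{u_x-u_{x'}}_{H^k}\le C\norm{\delta_x-\delta_{x'}}_{H^{-k}}\le C'\norm{x-x'}_2^{\gamma}$, so $x\mapsto u_x$ is H\"older continuous into $\calC(\Omega)$. This settles the boundedness and continuity requirements of Assumption \ref{assumption:PD}.

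The main obstacle is the vanishing at infinity on compact sets. On a bounded Lipschitz domain this is the assertion that the Dirichlet Green function $u_x(y)$ tends to $0$ uniformly for $y$ in a fixed compact $C\sse\Omega$ as the pole $x$ approaches $\partial\Omega$; I would prove it by combining interior estimates (higher-order Caccioppoli/Schauder bounds on a neighbourhood of $C$ staying away from $\partial\Omega$) with the fact that the energy $\norm{u_x}_{H^k}$ localizes near the pole, so that its contribution on $C$ decays as $x\to\partial\Omega$. For $\Omega=\R^d$ this is the genuinely delicate point, since it amounts to quantitative decay of the Green function as $\norm{x}_2\to\infty$, i.e. to fundamental-solution estimates for the order-$2k$ operator in the homogeneous Sobolev realization; this is where the bulk of the technical work concentrates and should be isolated as a separate lemma.

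Finally, for the continuation property (Assumption \ref{assumption:continuation}) on a set $K$ with non-empty interior and smooth boundary, I would argue in two parts. Existence of the extension $\widehat u$ uses that, by interior elliptic regularity, any distributional solution of $P(D)u=0$ is smooth inside $K$, so that together with the smoothness of $\partial K$ and ellipticity one can solve the associated boundary value problem on $\Omega$. Uniqueness reduces to showing that a solution $w$ of $P(D)w=0$ on $\Omega$ that vanishes on the open set $\mathrm{int}\,K$ must vanish identically; this is the strong unique continuation property, which holds for elliptic operators of the form \eqref{eq:Elliptic} under the assumed coefficient regularity (Aronszajn--Cordes type theorems). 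Invoking this external unique-continuation result is the one step I would not attempt to reprove from scratch.
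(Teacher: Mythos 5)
Your variational setup coincides with the paper's: both formulate \eqref{eq:fundamentalSolution} weakly on $\calH^k_0(\Omega)$, use ellipticity to invoke Lax--Milgram, and use the embedding $\calH^k_0(\Omega)\hookrightarrow \calC_0(\Omega)$ (valid since $k>d/2$) both to make $\delta_x$ a bounded functional and to place $u_x$ in $\calC(\Omega)$; your H\"older estimate on $x\mapsto u_x$ is a nice addition. The genuine gap is the vanishing-at-infinity-on-compact-sets claim, which is precisely the part you leave unproven. For bounded $\Omega$ your sketch (``the energy localizes near the pole, so its contribution on $C$ decays'') is not an argument --- nothing in the Lax--Milgram estimate localizes the energy. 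The paper's route is short and you should be able to reproduce it: as $x\to\partial\Omega$ one has $\delta_x\wstarto 0$ against $\calC_0(\Omega)$ (test functions vanish at the boundary), hence $u_x\wto 0$ in $\calH^k_0(\Omega)$ by continuous dependence, and the \emph{compactness} of the embedding $\calH^k_0(\Omega)\hookrightarrow\calC_0(\Omega)$ upgrades this to $u_x\to 0$ uniformly. For $\Omega=\R^d$ you explicitly defer the whole point to an unproved lemma; the paper settles it by translation invariance, writing $u_x=\Phi(\cdot-x)$ for a fundamental solution $\Phi$ whose Fourier transform satisfies $(1+\abs{\xi}^{2k})\vert\widehat{\Phi}\vert\leq C$ by H\"ormander's estimate plus ellipticity, so that $\widehat{\Phi}\in L^1$ (again $k>d/2$) and $\Phi\in\calC_0(\R^d)$ by Riemann--Lebesgue. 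Without some such argument the second assertion of the proposition is simply not established.

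A second, smaller issue is your uniqueness step for the continuation property. Aronszajn--Cordes type strong unique continuation theorems are results about \emph{second-order} elliptic operators; they do not apply as stated to the order-$2k$ operator \eqref{eq:Elliptic}, and unique continuation for higher-order elliptic operators with non-analytic coefficients is a genuinely delicate matter (the characteristics are not simple). The paper avoids this by a more elementary reduction: it argues that a solution of $P(D)u=0$ is polyharmonic, so that $\Delta^{k-1}u$ is harmonic, vanishes on the open interior of $K$, and hence vanishes identically by the identity theorem, iterating $k$ times. Whatever one thinks of the details of that reduction, your citation as it stands does not cover the operator at hand, so this step also needs repair.
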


\begin{remark}
	The assumption $k>d/2$ is crucial, since only then, we can guarantee that the solutions $u_x$ of \eqref{eq:fundamentalSolution} are continuous. Consider for instance the Laplacian operator $\Delta$ on $\R^d$ for $d \geq 2$. Then $k=1\leq d/2$ and 
	\begin{align*}
		u_x(y) = \begin{cases} \log( \norm{y-x}_2 ) & \text{ if } d =2, \\
		\norm{x-y}_2^{2-d}  & \text{ otherwise,} \end{cases}
	\end{align*}
	which are not continuous.
\end{remark}

\subsubsection{Example 4: $L=D$ and the space $BV(\rbrack 0,1 \lbrack )$}\label{subsec:example2}
Another important operator which is \emph{not} covered by Proposition \ref{prop:DiffOp} is the univariate derivative $D$ in the univariate case. In this case, the function $u_x$ in \eqref{eq:fundamentalSolution} is  equal to a shifted Heaviside function, which of course is not continuous. At least this operator can however still be naturally included in our framework, as we will show here.

We set $\Omega=]0,1[$.
The space $BV(\Omega)$ of bounded variation functions is defined by (see \cite{ambrosio2000functions}):
\begin{equation}
 BV(\Omega) = \{ u \in L^1(\Omega), Du \textrm{ is a Radon measure}, \|Du\|_{TV} <+\infty\},
\end{equation}
where $D$ is the distributional derivative. 
Using our notations, it amounts to taking $L=D$ and $\B=BV(\Omega)$.
For this space, we have $\ker L=\vect(1)$, the vector space of constant functions on $\Omega$. (Note that in fact, the norm $\norm{u}_{BV} = \norm{u}_1 + \norm{Du}_{TV}$ is of the general form described in the introduction).

\begin{lemma}\label{lem:Dplus}
For $L=D$ we have $\ran L=\M$, and for all $\mu \in \M$ and all $s\in [0,1]$,
\begin{equation}
 (L^+ \mu)(s) = \mu([0,s]) - \int_0^1 \mu([0,t])\,dt.
\end{equation}
In addition, for a functional $\xi \in BV(]0,1[)^*$ of the form
\begin{align*}
	\sprod{\xi, u} = \int_0^1 \xi(t) u(t) \,dt,
\end{align*}
with $\xi \in L^1(\Omega)$, we have $(L^+)^*\xi \in \calC_0(\Omega)$ and letting $\bar \xi=\int_{0}^1\xi(t)\,dt$, we have
\begin{equation}\label{eq:defDplusstar}
	((L^+)^*\xi)(s) = \int_0^s (\bar \xi - \xi(t)) \,dt.
\end{equation}
\end{lemma}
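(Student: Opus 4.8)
The plan is to handle the three claims in order, using throughout the cumulative distribution function $F_\mu(s) := \mu([0,s])$ of a measure $\mu \in \M(]0,1[)$. For surjectivity I would exhibit an explicit antiderivative: given $\mu$, set $u = F_\mu$. Since $\abs{u(s)} \leq \norm{\mu}_{TV}$ for every $s$, the function $u$ is bounded, hence lies in $L^1(]0,1[)$; and testing against $\phi \in \calD(]0,1[)$ and exchanging the order of integration (Fubini, justified by $\norm{\mu}_{TV}<\infty$) converts $-\int_0^1 F_\mu(s)\phi'(s)\,ds$ into $\int_{]0,1[}\phi\,d\mu$, using $\int_t^1 \phi' = -\phi(t)$ since $\phi$ vanishes near the endpoints. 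Thus $Du = \mu$ in $\calD'(]0,1[)$, so $u \in BV(]0,1[)$ and $\ran D = \M$.

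For the pseudoinverse, recall $\ker D = \vect(1)$; I would take the complement $V = \set{u \in BV(]0,1[) : \int_0^1 u = 0}$, which is closed because the mean is continuous on $BV(]0,1[) \hookrightarrow L^1(]0,1[)$, and which satisfies $BV(]0,1[) = \ker D \oplus V$ through the unique splitting $u = \bar u + (u-\bar u)$ with $\bar u = \int_0^1 u$. Since $\ran D = \M$, the projector $\Pi_{\ran D}$ is the identity, so $L^+\mu = (D\vert_V)^{-1}\mu$ is simply the unique zero-mean antiderivative of $\mu$. The candidate $s \mapsto F_\mu(s) - \int_0^1 F_\mu(t)\,dt$ has derivative $\mu$ by the first step and integrates to zero by construction, so by uniqueness it equals $L^+\mu$; this is the stated formula.

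For the adjoint I would compute $\sprod{\xi, L^+\mu} = \int_0^1 \xi(s)(L^+\mu)(s)\,ds$ and reduce it to a pairing against $\mu$ by Fubini once more. Expanding $L^+\mu$ and using $\int_0^1 \xi(s)F_\mu(s)\,ds = \int_{]0,1[}(\bar\xi - \int_0^r \xi(t)\,dt)\,d\mu(r)$ together with $\int_0^1 F_\mu(r)\,dr = \int_{]0,1[}(1-r)\,d\mu(r)$, a short computation collapses the integrand and yields $\sprod{\xi, L^+\mu} = \int_{]0,1[}\rho\,d\mu$ with $\rho(s) = \int_0^s(\bar\xi - \xi(t))\,dt$; this identifies $(L^+)^*\xi = \rho$. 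It then remains to check $\rho \in \calC_0(]0,1[)$: it is absolutely continuous hence continuous, and its boundary values $\rho(0)=0$ and $\rho(1) = \bar\xi - \int_0^1\xi(t)\,dt = 0$ show it vanishes at the two ends of $]0,1[$, which is exactly membership in $\calC_0$ on a bounded open interval. The only genuinely delicate points I anticipate are keeping the two Fubini exchanges honest — all integrands are absolutely integrable since $\norm{\mu}_{TV}<\infty$ and $\xi \in L^1$ — and reading $\calC_0(]0,1[)$ correctly as functions decaying to zero at the boundary rather than at an actual infinity.
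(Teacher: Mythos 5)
Your proposal is correct and follows essentially the same route as the paper: verify $DF_\mu=\mu$ distributionally via Fubini to get surjectivity, identify $V$ with the zero-mean functions so that $L^+\mu$ is the unique zero-mean antiderivative, and compute $(L^+)^*\xi$ by a second Fubini exchange. Your explicit checks that $V$ is closed and that $\rho(0)=\rho(1)=0$ (hence $\rho\in\calC_0(]0,1[)$) only make explicit what the paper asserts in passing.
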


As can be seen, $L^+$ is simply a primitive operator. 
The elementary functions $L^+\delta_x$ are Heavyside functions translated at a distance $x$ from the origin.
Hence, Theorem \ref{thm:structure} states that there always exist total variation minimizers in 1D that can be written as staircase functions with at most $\overline{m}$ jumps. 
Note that in this case, the Heavyside functions coincide with the general splines introduced in \cite{unser2016splines}.

\subsubsection{An uncovered case: $L=\nabla$  and the space $BV(\rbrack 0,1 \lbrack^2 )$}\label{subsec:example3}

It is very tempting to use Theorem \ref{thm:structure} on the space $\B=BV(\rbrack 0,1 \lbrack^2 )$. As mentioned in the introduction, this space is crucial in image processing since its introduction in \cite{rudin1992nonlinear}. Unfortunately, this case is not covered by Theorem \ref{thm:structure}, since $L\B$ is then a space of vector valued Radon measures, and our assumptions only cover the case of scalar measures.

\subsection{Numerical resolution} \label{sec:Num}

In this section, we show how the infinite dimensional problem \eqref{eq:mainproblem} can be solved using standard optimization approaches. We will make the following additional assumption:
\begin{assumption}[Additional assumption on $f_b$] \
$f_b$ is convex  and lower semicontinuous.
\end{assumption}
Depending on the structure of the measurement functions $(a_i)$, we will propose to solve the primal problem \eqref{eq:mainproblem} directly, or to solve two consecutive convex problems: the dual and the primal.
We first recollect a few properties of the dual to shed some light on the solutions properties.

\subsubsection{The dual problem and its relationship to the primal}\label{sec:dual}

A natural way to turn \eqref{eq:mainproblem} into a finite dimensional problem is to use duality as shown in the following proposition.

\begin{proposition}[Dual of problem \eqref{eq:mainproblem}]\label{prop:dual}
Define $h:\M(\Omega) \to \R\cup \{+\infty\}$ by $h(\mu)=\|\mu\|_{TV} + \iota_{\ran L}(\mu)$.
Then, the following duality relationship holds:
 \begin{equation}\label{eq:dualbad}
  \min_{u \in \B} J(u) = \sup_{q \in \R^m, A^*q\in \ran L^*}  - h^*((L^+)^* A^*q) - f_b^*(q).
 \end{equation}
In the special case $\ran L = \M$, this yields
\begin{equation}\label{eq:dualgood}
	\min_{u \in \B} J(u) = \sup_{q \in \R^m, A^*q\in \ran L^*, \norm{(L^+)^*A^*q}_\infty \leq 1}  - f_b^*(q).
\end{equation}
 
In addition, let $(\hat u, \hat q)$ denote any primal-dual pair of problem \eqref{eq:dualbad}. The following duality relationships hold:
\begin{equation}
 A^* \hat q \in L^* \partial({\|\cdot\|_{TV}})(L\hat u) \label{eq:primaldual} \mbox{ and } -\hat q \in \partial f_b(A \hat u). 
\end{equation}
\end{proposition}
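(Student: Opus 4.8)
The plan is to read \eqref{eq:mainproblem} as a Fenchel--Rockafellar problem $\inf_{u\in\B} F(u) + G(Au)$ with $F = \norm{\cdot}_{TV}\circ L$, $G = f_b$ and $\Lambda = A$, and to apply the duality theorem of \cite{ekeland1999convex}. With the perturbation $\Phi(u,p) = F(u) + G(Au - p)$ one computes $\Phi^*(0,q) = F^*(A^*q) + f_b^*(-q)$, so that the dual takes the form
\begin{align*}
\min_{u\in\B} J(u) = \sup_{q\in\R^m} -F^*(A^*q) - f_b^*(-q).
\end{align*}
Everything then reduces to an explicit evaluation of $F^* = (\norm{\cdot}_{TV}\circ L)^*$ at the points $A^*q$.

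To compute $F^*(\phi) = \sup_{u\in\B} \sprod{\phi,u} - \norm{Lu}_{TV}$ I would use the splitting $\B = \ker L \oplus V$ from Assumption \ref{assump:L} and write every $u$ as $u = u_K + L^+\mu$ with $u_K\in\ker L$, $\mu\in\ran L$ and $Lu=\mu$. The supremum over $u_K$ is $0$ if $\phi$ annihilates $\ker L$ and $+\infty$ otherwise; by the closed range theorem, applicable precisely because $\ran L$ is closed, the annihilator $(\ker L)^\perp$ equals $\ran L^*$, which produces the feasibility constraint $A^*q\in\ran L^*$. The remaining supremum over $\mu\in\ran L$ of $\sprod{(L^+)^*\phi,\mu} - \norm{\mu}_{TV}$ is exactly $h^*((L^+)^*\phi)$, so that $F^*(A^*q) = h^*((L^+)^*A^*q)$ on the feasible set. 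Since $h$ is even (both $\norm{\cdot}_{TV}$ and $\iota_{\ran L}$ are), $h^*$ is even, and since the feasible set $\set{q : A^*q\in\ran L^*}$ is symmetric, the substitution $q\mapsto -q$ turns $f_b^*(-q)$ into $f_b^*(q)$ without changing the value of the supremum; this yields \eqref{eq:dualbad}. For \eqref{eq:dualgood} I would then specialise to $\ran L = \M$, where $h = \norm{\cdot}_{TV}$ and $h^*$ is the indicator of the unit ball of the dual norm on $\M^*$. Assumption \ref{assump:a} guarantees $(L^+)^*A^*q\in\calC_0(\Omega)$, and for such a function the dual norm coincides with $\norm{\cdot}_\infty$: the bound $\int\rho\,d\mu\leq\norm{\rho}_\infty\norm{\mu}_{TV}$ is tight up to $\epsilon$ by testing against normalised approximate Diracs near a maximiser of $\abs{\rho}$. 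Hence $h^*((L^+)^*A^*q)$ is $0$ when $\norm{(L^+)^*A^*q}_\infty\leq 1$ and $+\infty$ otherwise, which is exactly the constraint in \eqref{eq:dualgood}.

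The delicate point, which I expect to be the main obstacle, is justifying strong duality (no gap \emph{and} attainment of the dual supremum) in this infinite-dimensional Banach setting, as opposed to merely weak duality, which follows from the Fenchel--Young inequalities directly. Here I would invoke the standard qualification condition: $F$ is finite and continuous on all of $\B$ — indeed $\norm{Lu}_{TV}\leq\norm{u}$ makes it Lipschitz — while $f_b\circ A$ has nonempty domain by Assumption \ref{assump:solvability}. Thus $F$ is continuous at a point of $\mathrm{dom}(f_b\circ A)$ and the qualification of \cite{ekeland1999convex} applies, closing the gap and giving a dual maximiser.

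Finally, the extremality relations characterising a primal--dual pair $(\hat u,\hat q)$ are the two Fenchel--Young equalities that are forced when the duality gap vanishes. Splitting $J(\hat u)$ minus the dual value into the two nonnegative Fenchel--Young defects and using that both must vanish, one obtains $f_b(A\hat u)+f_b^*(-\hat q)=\sprod{-\hat q,A\hat u}$, i.e.\ $-\hat q\in\partial f_b(A\hat u)$, and $F(\hat u)+F^*(A^*\hat q)=\sprod{A^*\hat q,\hat u}$, i.e.\ $A^*\hat q\in\partial(\norm{\cdot}_{TV}\circ L)(\hat u)$. The last subdifferential equals $L^*\partial\norm{\cdot}_{TV}(L\hat u)$ by the subdifferential chain rule, legitimate here because $\norm{\cdot}_{TV}$ is continuous at $L\hat u$; this establishes \eqref{eq:primaldual}.
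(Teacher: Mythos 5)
Your overall route coincides with the paper's: both pass through the Fenchel--Rockafellar scheme of \cite{ekeland1999convex}, compute $(\norm{L\cdot}_{TV})^*$ by splitting $\B=\ker L\oplus V$, obtain the constraint $A^*q\in\ran L^*$ from the closed range theorem, reduce the remaining supremum to $h^*((L^+)^*\cdot)$, specialise $h^*$ to the indicator of the sup-norm ball when $\ran L=\M$, and read off \eqref{eq:primaldual} from the Fenchel--Young extremality relations. That part is fine and matches the paper's proof step for step (the sign bookkeeping via evenness of $F^*$ is equivalent to the paper's writing the dual as $-g^*(-A^*q)-f_b^*(q)$).

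The gap is in your strong-duality step. For the perturbation $\Phi(u,p)=F(u)+f_b(Au-p)$ --- the only one whose dual variable lives in $\R^m$ --- the qualification in \cite{ekeland1999convex} requires the function sitting in the \emph{perturbed} slot, namely $f_b$, to be finite and continuous at some point $Au_0$ with $u_0\in\mathrm{dom}\,F$. Continuity of $F$ on all of $\B$ is the hypothesis for the perturbation placed on the other slot, whose dual variable would live in $\B^*$, not $\R^m$; it does not certify anything about the value function $p\mapsto\inf_u\Phi(u,p)$ on $\R^m$. Since the paper explicitly allows $f_b$ to be an indicator (equality constraints, box constraints), your qualification fails precisely in the cases of interest. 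Moreover, the conclusion you draw --- that the supremum in \eqref{eq:dualbad} is attained --- is false in general: the remark immediately following the proposition states that the dual need not have a solution and invokes \cite[Th.~4.2]{BorweinLewis1992} with a relative-interior or polyhedrality condition to guarantee attainment. The correct repair is to exploit that the perturbation space $\R^m$ is finite-dimensional: the value function $v(p)=\inf_u F(u)+f_b(Au-p)$ is convex with $\mathrm{dom}\,v=\ran A-\mathrm{dom}\,f_b\subseteq\R^m$, and one argues no duality gap (equality of values, which is all the proposition claims) without asserting that a maximiser $\hat q$ exists; attainment is then a separate statement under the additional hypotheses of the remark.
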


For a general operator $L$, computing $h^*$ may be out of reach, since the conjugate of a sum cannot be easily deduced from the conjugates of each function in the sum. Hence, we now focus on problem \eqref{eq:dualgood} corresponding to the case $\ran L=\M$. This covers at least the two important cases $L=\Id$ and $L=D$, as shown in examples \ref{subsec:example1} and \ref{subsec:example2}.

\begin{remark}
In general, the dual problem does not need to have a solution. A straightforward application of \cite[Th. 4.2]{BorweinLewis1992} however shows that if either of the two following conditions hold
\begin{enumerate}
	\item $\ran A$ intersects the relative interior of $ \mathrm{dom}f_b = \{
	q\in \R^m : f_b(q)<\infty\}$,
	\item $f_b$ is polyhedral (i.e. has a convex polyhedral epigraph) and $\ran A$ intersects $\mathrm{dom} f_b$,
\end{enumerate} 
the dual problem does have a solution. These conditions are mild: For all of the convex examples discussed in Section \ref{sec:assump}, the existence of a $u \in \B$ with $Au=b$ is sufficient for at least one of them to hold.
\end{remark}

Solving the dual problem \eqref{eq:dualgood} does not directly provide a solution for the primal problem \eqref{eq:mainproblem}.
The following proposition shows that it however yields information about the support of $L\hat u$, which is the critical information to retrieve. 

\begin{proposition}\label{prop:dualtoprimal}
Assume that $\ran L=\M$ and let $(\hat u,\hat q)$ denote a primal-dual pair of problem \eqref{eq:dualgood}.
Let $I(\hat q)=\{x\in \Omega, |(L^+)^*(A^*\hat q)|(x)=1\}$. Then 
\begin{equation}
\supp(L\hat u)\subseteq I(\hat q). 
\end{equation}
In particular, if $I(\hat q) = \{x_1,\hdots, x_p\}$, then $\hat u$ can be written as:
 \begin{equation}\label{eq:solutionfinite}
  \hat u = u_K + \sum_{k=1}^p d_k L^+\delta_{x_k}
 \end{equation}
 with $u_K\in \ker L$ and $(d_k)\in \R^p$. If problem \eqref{eq:mainproblem} admits a unique solution, then $p\leq \overline{m}$ and $\hat u$ is the solution in Theorem \ref{thm:structure}.
\end{proposition}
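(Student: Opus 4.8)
The plan is to read off all the support information directly from the first optimality condition in \eqref{eq:primaldual}, namely $A^*\hat q \in L^*\partial(\norm{\cdot}_{TV})(L\hat u)$. Write $\mu := L\hat u \in \M$ and let $w \in \partial(\norm{\cdot}_{TV})(\mu)$ be such that $A^*\hat q = L^* w$. The crucial algebraic observation is that applying $(L^+)^*$ recovers exactly the function defining $I(\hat q)$: since $\ran L = \M$ we have $\Pi_{\ran L}=\Id$ and hence $L L^+ = \Id_\M$, so that, using $(L^+)^* L^* = (L L^+)^*$,
\[
(L^+)^* A^* \hat q = (L^+)^* L^* w = (L L^+)^* w = w .
\]
Thus $\eta := (L^+)^* A^* \hat q = w$ is on one hand an element of $\calC_0(\Omega)$ by Assumption \ref{assump:a}, and on the other hand realizes the subdifferential of the total variation norm at $\mu$.

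First I would translate the membership $\eta \in \partial(\norm{\cdot}_{TV})(\mu)$ into two scalar conditions, $\norm{\eta}_\infty \leq 1$ and $\sprod{\eta,\mu} = \norm{\mu}_{TV}$ (for $\mu \neq 0$; the case $\mu=0$ is trivial and gives $\hat u \in \ker L$). Here I use that for $\eta \in \calC_0(\Omega) \subset \M^*$ the dual norm reduces to $\norm{\eta}_\infty$ and the pairing is $\sprod{\eta,\mu}=\int_\Omega \eta \, d\mu$. Introducing the polar decomposition $\mu = \sigma \abs{\mu}$ with $\abs{\sigma}=1$ holding $\abs{\mu}$-almost everywhere, the equality case becomes
\[
\int_\Omega \eta \, \sigma \, d\abs{\mu} = \int_\Omega 1 \, d\abs{\mu}.
\]
Since $\abs{\eta\sigma} \leq 1$ pointwise, the integrand is bounded above by $1$, so the integral equality forces $\eta\sigma = 1$, and in particular $\abs{\eta} = 1$, for $\abs{\mu}$-almost every $x$; hence $\abs{\mu}(\set{\abs{\eta}<1})=0$. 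As $\eta$ is continuous, $I(\hat q) = \set{\abs{\eta}=1}$ is closed, whence $\supp(L\hat u)=\supp \mu \sse I(\hat q)$. This support argument is the technical heart of the proof, and the step I expect to require the most care (the reduction of the dual norm to $\norm{\cdot}_\infty$ on $\calC_0$, and the Radon--Nikodym/continuity argument).

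Assuming now $I(\hat q)=\set{x_1,\dots,x_p}$ finite: a measure whose support is contained in a finite set is a finite combination of Dirac masses, so $\mu = L\hat u = \sum_{k=1}^p d_k\delta_{x_k}$ for some $d_k \in \R$. Using $L L^+ = \Id$ once more, the element $u_K := \hat u - L^+ L \hat u$ satisfies $L u_K = 0$, i.e.\ $u_K \in \ker L$, and therefore
\[
\hat u = u_K + L^+(L\hat u) = u_K + \sum_{k=1}^p d_k L^+\delta_{x_k},
\]
which is exactly \eqref{eq:solutionfinite}.

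Finally, for the uniqueness statement, Assumption \ref{assump:solvability} holds because $\hat u$ is a solution, so Theorem \ref{thm:structure} applies and furnishes a minimizer of the form $u_K' + \sum_{k=1}^{p'} d_k' L^+\delta_{y_k}$ with $p' \leq \overline m$, where after discarding vanishing coefficients all $d_k' \neq 0$. If the minimizer is unique, this minimizer must coincide with $\hat u$, which proves that $\hat u$ is the solution of Theorem \ref{thm:structure}. Moreover $L\hat u$ then has at most $\overline m$ atoms, so discarding from the representation above the points $x_k$ with $d_k=0$ yields a representation with $p\leq\overline m$ Dirac masses. The only mildly delicate point is the bookkeeping between the cardinality of $I(\hat q)$ and the number of atoms actually carrying nonzero weight, the latter being the quantity bounded by $\overline m$.
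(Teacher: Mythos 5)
Your proof is correct and follows essentially the same route as the paper: reduce the primal--dual inclusion \eqref{eq:primaldual} to $(L^+)^*A^*\hat q \in \partial(\norm{\cdot}_{TV})(L\hat u)$ via the identity $(L^+)^*L^*=(LL^+)^*=\Id$, then read off $\abs{(L^+)^*A^*\hat q}=1$ $\abs{L\hat u}$-a.e.\ from the known form \eqref{eq:SubDiff} of the TV-subdifferential to get the support inclusion and the atomic representation. Your derivation of the key inclusion (applying $(L^+)^*$ directly to $A^*\hat q=L^*w$) is a slightly more direct version of the paper's argument via injectivity of $j_V^*$ on $\ran L^*$, and you additionally spell out the uniqueness bookkeeping at the end, but the substance is the same.
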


In the case where $I(\hat q)$ is a finite set, Proposition \ref{prop:dualtoprimal} can be used to recover a solution $\hat u$ from $\hat q$, by injecting the specific structure \eqref{eq:solutionfinite} into \eqref{eq:mainproblem}. Let $(\lambda_i)_{1\leq i\leq r}$ denote a basis of $\ker L$ and define the matrix
\begin{equation}\label{eq:matrixM}
 M=\begin{bmatrix}
    (\langle a_i, \lambda_k\rangle)_{1\leq i \leq m,1\leq k \leq r} , (\langle (L^+)^* a_i, \delta_{x_j}\rangle)_{1\leq i \leq m,1\leq j \leq p}
   \end{bmatrix}
\end{equation}
Then problem \eqref{eq:mainproblem} becomes a finite dimensional convex program which can be solved with off-the-shelf algorithms:
\begin{equation}\label{eq:primalmadefinite}
 \min_{c\in \R^r, d\in \R^p} f_b\left( M \begin{bmatrix}
                                          c \\ d
                                         \end{bmatrix}\right) + \|d\|_1.
\end{equation}

Overall, this section suggests the following strategy to recover $\hat u$: 
\begin{enumerate}
 \item Find a solution $\hat q$ of the finite dimensional dual problem \eqref{eq:dualgood}.
 \item Identify the support $I(\hat q)=\{x\in \Omega, |(L^+)^*(A^*\hat q)|(x)=1\}$. 
 \item If $I(\hat q)$ is finitely supported, solve the finite dimensional primal problem \eqref{eq:primalmadefinite} to construct $\hat u$.
\end{enumerate}
Each step within this algorithmic framework however suffers from serious issues:
\begin{description}
 \item[Problem 1] the dual problem \eqref{eq:dualgood} is finite dimensional but involves two infinite dimensional convex constraints sets
\begin{equation}
\Q_1= \{q\in \R^m, A^*q\in \ran L^*\} 
\end{equation}
and 
\begin{equation}
\Q_2= \{q\in \R^m,  \norm{(L^+)^*A^*q}_\infty \leq 1\},
\end{equation}
which need to be handled with a computer.
\item[Problem 2] finding $I(\hat q)$ again consists of a possibly nontrivial maximization problem.
\item[Problem 3] the set $I(\hat q)$ may not be finitely supported. 
\end{description}

To the best of our knowledge, finding general conditions on the functions 
\begin{equation}
 \rho_i = (L^+)^*a_i
\end{equation}
allowing to overcome those hurdles is an open problem. It is however known that certain family of functions including polynomials and trigonometric polynomials \cite{lasserre2001global} allow for a numerical resolution. 
In the following two sections, we study two specific cases useful for applications in details: the piecewise linear functions and trigonometric polynomials.

\subsubsection{Piecewise linear functions in arbitrary dimensions} \label{sec:PW}

In this section, we assume that $\Omega$ is a bounded polyhedral subset of $\R^d$ and that each $\rho_i=(L^+)^*a_i$ is a piecewise linear function, with finitely many regions, all being polyhedral.
This class of functions is commonly used in the finite element method. 
Its interest lies in the fact that any smooth function can be approximated with an arbitrary precision by using mesh refinements.

\paragraph{Solving the primal}
For this class, notice that the function $(L^+)^*A^*q=\sum_{i=1}^m q_i \rho_i$ is still a piecewise linear function with finitely many polyhedral pieces. The maximum of the function has to be attained in at least one of the finitely many vertices $(v_j)_{j\in J}$ of the pieces. 
This is a key observation from a numerical viewpoint since it simultaneously allows to resolve problems 1 and 2.  
First, the constraint set $\Q_2$ can be described by a finite set of linear inequalities:
\begin{align*}
	-1 \leq (L^+)^*A^*q(v_j) \leq 1, \quad j \in J.
\end{align*} 
Secondly, $I(\hat{q})$  can be retrieved by evaluating $(L^+)^*A^*q$ only on the vertices $(v_j)_{j \in J}$. 

Unfortunately, problem 3 is particularly important for this class: $I(\hat{q})$ needs not be finitely supported since the maximum could be attained on a whole face. The following proposition however confirms that there always exists solutions supported on the vertices.
\begin{proposition} \label{prop:PiecewiseLinear}
Suppose that $\ran L = \M$ and that the dual problem \eqref{eq:dualgood} has a solution. Then Problem \eqref{eq:mainproblem} has at least one solution of the form
\begin{equation}\label{eq:solutionpiecewiselinear}
\hat u = \sum_{j\in J} d_j L^+\delta_{v_j} +u_K
\end{equation}
with $u_K \in \ker L$, $d_j \in \R$ and the $v_j$ are the vertices of the polyhedral pieces. 
\end{proposition}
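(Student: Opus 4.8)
The plan is to start from the primal–dual pair $(\bar u,\hat q)$ supplied by the hypothesis, set $\mu=L\bar u$, and redistribute the mass of $\mu$ onto the vertices $(v_j)_{j\in J}$ without changing either the measurements or the total variation. By Steps~1 and 2 of the proof of Theorem~\ref{thm:structure}, $\mu$ is a minimizer of the reduced problem \eqref{eq:reducedProblem}, i.e.\ of $\norm{\mu}_{TV}$ subject to $H\mu=y_{X^\perp}$ with $H$ as in \eqref{eq:reducedProblem}; by Proposition~\ref{prop:dualtoprimal} its support lies in $I(\hat q)$, which in the piecewise linear case may be a positive-dimensional union of faces, and this is exactly the obstruction to finitely-supportedness. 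Moreover, since $\ran L=\M$ we have $LL^+=\Id$, so that from a redistributed measure $\tilde\mu$ one recovers a candidate by writing $\hat u=L^+\tilde\mu+u_K$ with $u_K\in\ker L$ chosen so that $A\hat u=A\bar u$; this is possible precisely because $\tilde\mu$ will satisfy $H\tilde\mu=y_{X^\perp}$, exactly as in Step~2.

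The central observation is that the measurement map is affine on each polyhedral piece. Let $\mathcal P$ be the common polyhedral refinement of the finitely many regions of $\rho_1,\dots,\rho_m$, so that every $\rho_i$ is affine on each cell $C\in\mathcal P$; the vertices of $\mathcal P$ form the finite set $(v_j)_{j\in J}$. Since $\langle a_i,L^+\delta_x\rangle=\langle (L^+)^*a_i,\delta_x\rangle=\rho_i(x)$, the map $x\mapsto H\delta_x=\Pi_{X^\perp}(\rho_1(x),\dots,\rho_m(x))$ is affine on each $C$, whence for any barycentric representation $x=\sum_{v\in\mathrm{vert}(C)}\lambda_v(x)\,v$ with $\lambda_v\geq 0$ and $\sum_v\lambda_v=1$ one has $H\delta_x=\sum_v\lambda_v(x)\,H\delta_v$. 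I would fix a measurable selection of barycentric coordinates on each cell (obtained from a triangulation of $C$ into simplices whose vertices are among those of $C$, on which barycentric coordinates are affine) and a measurable partition $(E_C)_{C\in\mathcal P}$ of $\supp\mu$ with $E_C\subseteq C$. Setting $\tilde\mu=\sum_j d_j\delta_{v_j}$ with $d_j=\sum_{C\ni v_j}\int_{E_C}\lambda_{v_j}\,d\mu$, the affinity of $H$ gives $H\tilde\mu=\sum_C\int_{E_C}H\delta_x\,d\mu(x)=H\mu=y_{X^\perp}$, so $\tilde\mu$ is feasible for \eqref{eq:reducedProblem}.

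To conclude optimality I would bound the total variation by the triangle inequality together with $\lambda_v\geq0$ and $\sum_v\lambda_v\equiv1$:
\begin{align*}
\norm{\tilde\mu}_{TV}\leq\sum_{C}\int_{E_C}\Big(\sum_{v\in\mathrm{vert}(C)}\lambda_v\Big)\,d|\mu|=\sum_C|\mu|(E_C)=\norm{\mu}_{TV}.
\end{align*}
Since $\mu$ minimizes \eqref{eq:reducedProblem} and $\tilde\mu$ is feasible, the reverse inequality holds as well, so $\norm{\tilde\mu}_{TV}=\norm{\mu}_{TV}$ and $\tilde\mu$ is itself a minimizer. Then $\hat u=L^+\tilde\mu+u_K$ satisfies $L\hat u=LL^+\tilde\mu=\tilde\mu$ and $A\hat u=A\bar u$, hence $f_b(A\hat u)=f_b(A\bar u)$ and $\norm{L\hat u}_{TV}=\norm{L\bar u}_{TV}$, so $J(\hat u)=J(\bar u)$ and $\hat u$ is a solution of \eqref{eq:mainproblem} of the announced form \eqref{eq:solutionpiecewiselinear}.

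The main obstacle is conceptual rather than computational: one must recognize that mass lying on a full face where $|(L^+)^*A^*\hat q|=1$ can be pushed to the vertices of that face at no cost, and—crucially—that the correct objects to decompose $\mu$ along are the cells of the common refinement $\mathcal P$, on which every individual $\rho_i$ is affine, rather than the connected components of $I(\hat q)$, on which the $\rho_i$ are only piecewise affine and the measurement map is therefore not affine. The two facts that make the redistribution free of charge are the affinity of $x\mapsto H\delta_x$ on each cell (preserving the measurements) and the triangle inequality with $\sum_v\lambda_v=1$ (preventing any increase of the total variation); a minor technical point to dispatch is the measurable choice of barycentric coordinates on non-simplicial cells, handled by triangulating each cell without introducing new vertices.
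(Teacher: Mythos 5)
Your proof is correct, but it follows a genuinely different route from the paper's. The paper starts from a solution $L^+\mu^*+u_K$ whose measure part is supported on the faces, invokes the dual certificate $\hat q$ and the subdifferential characterization \eqref{eq:SubDiff} to show that $(L^+)^*A^*\hat q$ is constantly $+1$ or $-1$ on any face carrying non-atomic mass, deduces that $\mu^*$ has constant sign there, collapses that mass to a single Dirac at the weighted barycenter (the constant sign is what makes $\abs{\mu^*(P)}=\abs{\mu^*}(P)$, so the TV norm is preserved \emph{exactly} at this step), and only then spreads the barycenter over the vertices by convexity. You bypass the certificate and the sign analysis entirely: you push the mass of $\mu=L\bar u$ directly onto the vertices via barycentric coordinates on the cells of the common refinement, observe that feasibility is preserved because $x\mapsto H\delta_x$ is affine on each cell, bound $\norm{\tilde\mu}_{TV}\leq\norm{\mu}_{TV}$ by the triangle inequality and $\sum_v\lambda_v\equiv 1$, and then upgrade to equality by optimality of $\mu$ for \eqref{eq:reducedProblem}. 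This is shorter and needs strictly less input (only the existence of a primal solution, which both proofs implicitly draw from Theorem \ref{thm:structure}, and the affine structure of the $\rho_i$); the inequality-plus-optimality trick neatly replaces the paper's exact-preservation argument. What the paper's longer route buys is the structural byproduct that an optimal measure has constant sign on each face of $I(\hat q)$, which is precisely the justification for the sparsification/merging procedure of Section \ref{sec:sparsification}; your argument establishes the proposition but does not recover that sign information. Two cosmetic points: your redistribution should be (and implicitly is) performed simplex by simplex after triangulating each cell, since barycentric coordinates on a non-simplicial polytope are only affine on the simplices of a triangulation; and the vertex set you use is that of the common refinement of the regions of all the $\rho_i$, which is indeed the set $(v_j)_{j\in J}$ the paper intends in Section \ref{sec:PW}.
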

Once again, knowing the locations $v_j$ of the Dirac masses in advance permits to solve \eqref{eq:primalmadefinite} directly (without solving the dual) in order to obtain an exact solution of \eqref{eq:mainproblem}.

\paragraph{Sparsifying the solution}
\label{sec:sparsification}

For piecewise linear measurement functions, it turns out that the solution is not unique in general and that the form \eqref{eq:solutionpiecewiselinear} is not necessarily the sparsest one. A related observation was already formulated in a different setting in \cite{duval2015exact}, where the authors show that in 1D, two Dirac masses are usually found when only one should be detected. 
Figure \ref{fig:piecewiselinear} illustrates different types of possible solutions for a 2D mesh. 
\begin{figure}
\centering
\includegraphics[width=.75\textwidth]{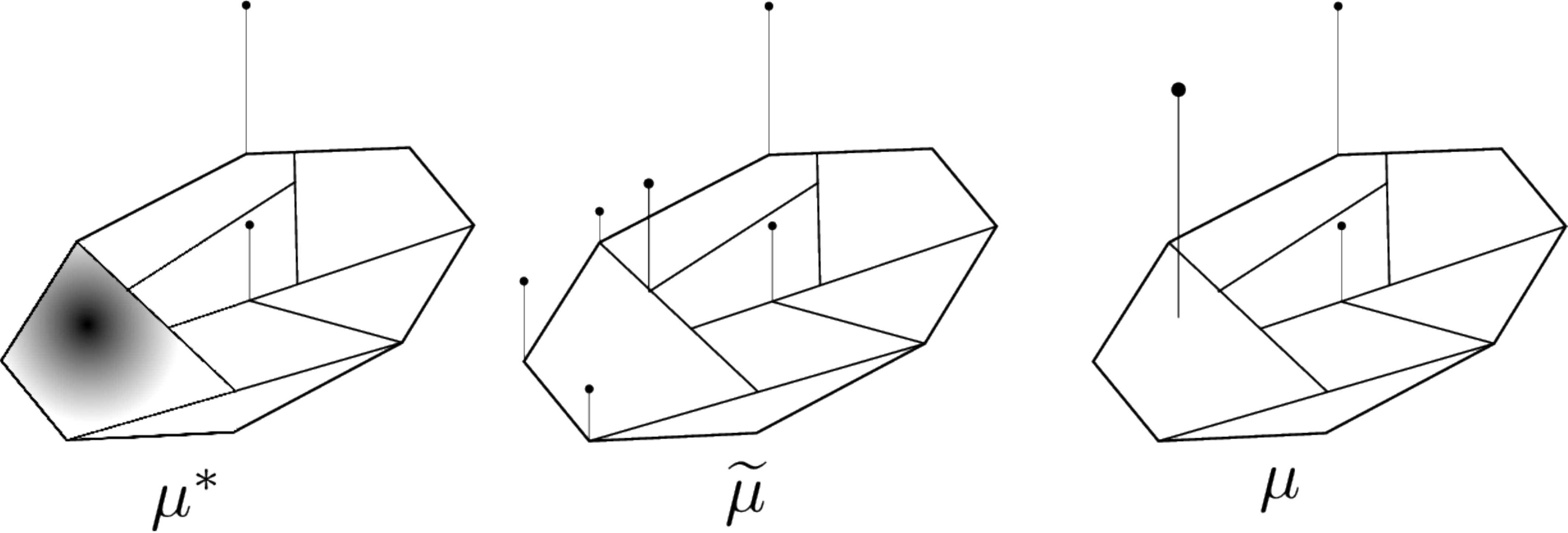}
\caption{A graphical depiction of the three types of solutions for piecewise linear measurements. \label{fig:piecewiselinear}}
\end{figure}

The proof of Proposition \ref{prop:PiecewiseLinear} suggests that one can sparsify a solution found by solving the primal problem resulting from the discretization through sampling on the grid of vertices. 
The basic reason is that piecewise linear measurements specify the zero-th and first order moments of a measure restricted to one piece. Among the infinitely many measures having these moments, one can pick the sparsest one, consisting of a unique Dirac mass. 
This principle allows to pass from the 6-sparse measure $\tilde \mu$ to the 3-sparse measure $\mu$ in Figure \ref{fig:piecewiselinear}. 

To be precise, a collection of peaks $(d_i \delta_{x_i})_{i \in I}$, where
\begin{enumerate}
\item $\mathrm{conv} (x_i)_{i \in I}$ is contained in one polyhedral region of $\cal G$.
\item $(d_i)_{i \in I}$ have the same sign $\epsilon \in \set{-1,1}$
\end{enumerate}
can be combined into one peak $\overline{d} \delta_{\overline{x}}$, with
\begin{align*}
	\overline{d} = \sum_{i \in I} d_i, \quad \overline{x} = \frac{1}{\overline{d}} \sum_{i \in I} d_i x_i.
\end{align*}
We will see in the numerical experiments that this seemingly novel principle allows exact recovery of $u$ under certain conditions on its initial structure.

%From a practical point of view, Dirac masses located at vertices associated to a common face, \emph{provided their masses have the same sign}, can be replaced by a unique mass located at the barycenter (weighted by the $d_j$) of the vertices. 

\paragraph{Relationship to standard discretization}

The traditional way to discretize total variation problems with $L=\Id$ consists in imposing the locations of the Dirac masses on a set of predefined points $(x_i)_{1\leq i \leq n}\in \Omega^n$. Then, one can look for a solution of the form $\hat u = \sum_{i=1}^n d_k \delta_{x_i}$ and inject this structure in problem \ref{eq:mainproblem}. 
By using this reasoning, there is no reason to find the exact solution of the original infinite dimensional problem.
Proposition \eqref{prop:PiecewiseLinear} sheds a new light on this strategy, by telling that this actually amounts to solving \emph{exactly} an infinite dimensional problem with piecewise linear measurement functions. 

\subsubsection{Trigonometric polynomials in 1D}

In this section, we assume that $\Omega=\mathbb{T}$ is the one dimensional torus (see remark \eqref{rem:Omega}). For $j\in \N$, let $p_j(t)=\exp(-2 \iota \pi j t)$. 
We also assume that the functions $\rho_i$ are real trigonometric polynomials:
\begin{equation*}
 \rho_i = \sum_{j=-K}^K \gamma_{j,i} p_j,
\end{equation*}
with $\gamma_{j,i} = \gamma_{-j,i}^*$.
For this problem, the strategy suggested in section \ref{sec:dual} will be adopted.

\paragraph{Solving the dual}
The following simple lemma states that in the case of a finite dimensional kernel, the constraint set $\Q_1$ is just a finite dimensional linear constraint.
\begin{lemma}\label{lem:range}
Let $r=\dim(\ker(L))<\infty$ and $(\lambda_i)_{1\leq i\leq r}$ denote a basis of $\ker L$. 
The set $\Q_1$ can be rewritten as
\begin{equation*}
 \Q_1=\{q\in \R^m, \forall 1\leq i \leq r, \langle q, A \lambda_i\rangle =0\}.
\end{equation*}
\end{lemma}
\begin{proof}
Since $\ran L^*=V$ (by the closed range theorem), $A^*q\in \ran L^*$ if and only if $\forall 1\leq i\leq r, \langle A^*q,\lambda_i\rangle =0$.
\end{proof}
Hence, when $\ker L$ is finite-dimensional the set $\Q_1$ can be easily handled by using numerical integration procedures to compute the $mr$ scalars $\langle a_k, \lambda_i\rangle$. Let us now turn to the set $\Q_2$. The following lemma is a simple variation of \cite[Thm 4.24]{dumitrescu2007positive}. It was used already for super-resolution purposes \cite{candes2014towards}. %and is standard in $H_\infty$-control \cite{}. 
\begin{lemma}\label{lem:trigo}
 The set $\Q_2$ can be rewritten as follows:
 \begin{multline*}
  \Q_2=\Bigg\{ \alpha \in \R^m, \exists Q\in \C^{(2K+1)\times (2K+1)},\begin{bmatrix}
                                                             Q & \Gamma \alpha \\
                                                             (\Gamma\alpha)^* & 1
                                                            \end{bmatrix}\succeq 0,\\
\sum_{i=1}^{2K+2-j}Q_{i,i+j}=
\begin{cases}
1, & j=0, \\
0, & 1\leq j \leq 2K+1.                                                                                                \end{cases}\Bigg\}.
\end{multline*}                             
\end{lemma}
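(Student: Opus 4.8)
The plan is to recognize this as the classical semidefinite characterization of trigonometric polynomials bounded by one, i.e.\ a bounded-real / Fejér–Riesz statement, and to reduce it to the cited result \cite{dumitrescu2007positive}. First I would observe that for $\alpha = q \in \R^m$ the function $(L^+)^*A^*q = \sum_{i=1}^m \alpha_i \rho_i =: R_\alpha$ is again a trigonometric polynomial of degree $K$ whose vector of Fourier coefficients is exactly $\Gamma\alpha \in \C^{2K+1}$; moreover the symmetry $\gamma_{j,i}=\gamma_{-j,i}^*$ guarantees $R_\alpha$ is real-valued. Thus membership $\alpha \in \Q_2$ is by definition the pointwise bound $\abs{R_\alpha(t)} \leq 1$ for all $t \in \mathbb{T}$, which, since $R_\alpha$ is continuous, is equivalent to the nonnegativity of the trigonometric polynomial $g_\alpha(t) := 1 - \abs{R_\alpha(t)}^2$ of degree $2K$.

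Next I would linearize the quadratic bound through the Schur complement. Writing $\psi(t) = (e^{-2\pi\iota j t})_{-K \le j \le K}$ for the Fourier vector, the off-diagonal block of the claimed matrix is $\Gamma\alpha$, and since the lower-right block equals $1>0$, the block inequality $\begin{bmatrix} Q & \Gamma\alpha \\ (\Gamma\alpha)^* & 1\end{bmatrix}\succeq 0$ is equivalent to $Q \succeq (\Gamma\alpha)(\Gamma\alpha)^*$. Separately, a direct computation gives $\psi(t)^* Q \psi(t) = \sum_{j} s_j(Q)\, e^{2\pi\iota j t}$, where $s_j(Q)$ is the sum of the $j$-th diagonal of $Q$; hence the diagonal-sum constraints $\sum_i Q_{i,i+j} = \delta_{j,0}$ are exactly the statement that $\psi(t)^* Q \psi(t) \equiv 1$ as a function of $t$ (using that $Q$ is Hermitian, so only $j \ge 0$ need be imposed).

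With these two reformulations in hand, the equivalence follows in both directions. For the ``only if'' direction I would, given $\alpha \in \Q_2$, invoke the Riesz–Fejér / sum-of-squares representation of the nonnegative polynomial $g_\alpha$, which is the content of \cite[Thm.~4.24]{dumitrescu2007positive}: there is a Hermitian $G \succeq 0$ of size $2K+1$ whose diagonal sums reproduce the Fourier coefficients of $g_\alpha$, i.e.\ $\psi(t)^* G \psi(t) = g_\alpha(t)$. Setting $Q = G + (\Gamma\alpha)(\Gamma\alpha)^*$ then yields $Q \succeq (\Gamma\alpha)(\Gamma\alpha)^*$ together with $\psi(t)^* Q \psi(t) = g_\alpha(t) + \abs{R_\alpha(t)}^2 \equiv 1$, which are precisely the required constraints. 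For the ``if'' direction, given such a $Q$, evaluating the inequality $Q - (\Gamma\alpha)(\Gamma\alpha)^* \succeq 0$ on the vector $\psi(t)$ gives $1 - \abs{R_\alpha(t)}^2 = \psi(t)^*\big(Q - (\Gamma\alpha)(\Gamma\alpha)^*\big)\psi(t) \ge 0$ for every $t$, i.e.\ $\abs{R_\alpha(t)} \le 1$, so $\alpha \in \Q_2$.

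The only genuinely nontrivial ingredient — and the step I expect to be the main obstacle — is the Riesz–Fejér representation used in the ``only if'' direction, namely that every nonnegative trigonometric polynomial admits a PSD Gram matrix with the prescribed diagonal sums; fortunately this is exactly the cited theorem, so the remaining work is purely bookkeeping. I would only need to be careful about conjugation conventions, so that $\psi(t)^*(\Gamma\alpha)(\Gamma\alpha)^*\psi(t)$ matches $\abs{R_\alpha(t)}^2$ rather than $\abs{R_\alpha(-t)}^2$; this is harmless since $\Q_2$ is invariant under $t \mapsto -t$, but it fixes the precise convention for $\Gamma$ and for the ordering of the diagonals in the constraint.
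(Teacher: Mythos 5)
Your proof is correct, and it reaches the same semidefinite characterization as the paper, but by a slightly different route. The paper's own proof is a two-step reduction: it multiplies $\sum_j(\Gamma\alpha)_j p_j$ by $p_{-K}$ to obtain a \emph{causal} trigonometric polynomial and then quotes the Bounded Real Lemma for such polynomials (\cite[Cor.~4.27]{dumitrescu2007positive}) verbatim, which directly delivers the block-matrix condition. You instead re-derive that corollary: you pass to the nonnegative polynomial $1-\abs{R_\alpha}^2$, invoke the Gram/trace parametrization of nonnegative trigonometric polynomials (the Riesz--Fej\'er-type result, \cite[Thm~4.24]{dumitrescu2007positive}) to produce $G\succeq 0$ with $\psi^*G\psi = 1-\abs{R_\alpha}^2$, and then package $Q = G + (\Gamma\alpha)(\Gamma\alpha)^*$ via the Schur complement. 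The deep ingredient is the same in both arguments (existence of a PSD Gram certificate for a nonnegative trigonometric polynomial); what your version buys is that the ``if'' direction becomes completely elementary (evaluate the quadratic form $Q-(\Gamma\alpha)(\Gamma\alpha)^*$ at $\psi(t)$), and that the causal-shift normalization $p_{-K}$ is unnecessary since the Gram parametrization applies directly to the symmetric-index form. Your bookkeeping on sizes is right ($1-\abs{R_\alpha}^2$ has degree $2K$, hence a Gram matrix of size $2K+1$), and your remark about conjugation conventions is the only place where care is needed; it is indeed harmless here because $R_\alpha$ is real. As a very minor point, note that the index convention in the displayed constraint differs between the lemma statement ($j=0$ gives $1$) and the paper's proof ($j=1$ gives $1$); your reading, that the diagonal sums encode $\psi^*Q\psi\equiv 1$, is the intended one.
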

With Lemmas \ref{lem:range} and \ref{lem:trigo} at hand, the dual problem \eqref{eq:dualgood} becomes a semidefinite program that can be solved with a variety of approaches, such as interior point methods \cite{vandenberghe1996semidefinite}.

\paragraph{Finding the Dirac mass locations}

The case of trigonometric polynomials makes Proposition \ref{prop:dualtoprimal} particularly helpful. In that case, either the trigonometric polynomial is zero and the solution $\hat u$ lives in the kernel of $L$, or the set $I$ is finite with cardinality at most $2K$, since $|(L^+)^*A^*q|^2-1$ is a negative trigonometric polynomial of degree $4K+2$.
Retrieving its roots can be expressed as an eigenvalue evaluation problem \cite{chandrasekaran2007fast} and be solved efficiently.

\section{Numerical Experiments}\label{sec:numerical}
In this section, we perform a few numerical experiments to illustrate the theory.
%We assume that $\Omega =[0,1]^d$ or that $\Omega=\mathbb{T}$. It is well known that any smooth (e.g. Lipschitz) function $\rho_i$ can be approximated with an arbitrary precision by functions $\tilde \rho_i$ being either piecewise linear or trigonometric functions. 
%Since we know how to \emph{exactly} solve the program \eqref{eq:mainproblem} numerically for these classes, it seems reasonable to replace the arbitrary functions $a_i$ in problem \eqref{eq:mainproblem}, by functions $\widetilde{a}_i\simeq a_i$ such that $\widetilde{\rho}_i = (L^+)^* \widetilde{a}_i$ are either trigonometric polynomials or piecewise linear. 
In all our experiments, we use the toolbox CVX \cite{cvx} for solving the resulting convex minimization problems.

\subsection{Piecewise linear functions} \label{sec:NumPW}

\subsubsection{Identity in 1D}

In this paragraph, we set $L=\Id$ and $\Omega=[0,1]$. We assume that the functions $a_i$ are random piecewise linear functions on a regular grid. The values of the functions on the vertices are taken as independent random Gaussian realizations with standard deviation $1$. In this experiment, we set $u$ as a sparse measure supported on 3 points. We probe it using 12 random measurement functions $a_i$ and do not perturb the resulting measurement vector $b$, allowing to set $f_b=\iota_{\{b\}}$.
The result is shown on Figure \ref{fig:1Dlinear}. As can be seen, the initially recovered measure is $7$ sparse. Using the sparsification procedure detailed in paragraph \ref{sec:sparsification} allows to \emph{exactly} recover the true $4$ sparse measure $u$. 
We will provide a detailed analysis of this phenomenon in a forthcoming paper.

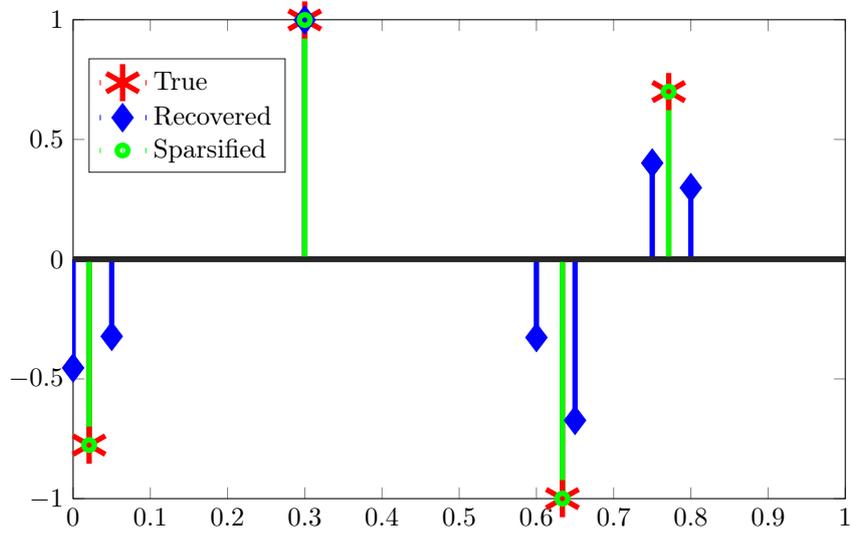
\begin{figure}[htp]
	\centering
% This file was created by matlab2tikz.
%
%The latest updates can be retrieved from
%  http://www.mathworks.com/matlabcentral/fileexchange/22022-matlab2tikz-matlab2tikz
%where you can also make suggestions and rate matlab2tikz.
%
\begin{tikzpicture}

\begin{axis}[%
width=4in,
height=2.5in,
at={(0.791in,0.502in)},
scale only axis,
xmin=0,
xmax=1,
xtick={   0, 0.1, 0.2, 0.3, 0.4, 0.5, 0.6, 0.7, 0.8, 0.9, 1},
ymin=-1,
ymax=1,
axis background/.style={fill=white},
legend style={legend cell align=left, align=right, draw=white!15!black,at={(0.02,0.8)},anchor=west}
]
\addplot[ycomb, color=red, line width=2.0pt, mark size=7.0pt, mark=asterisk, mark options={solid, red}] table[row sep=crcr] {%
0.0207519493594015	-0.776337745438523\\
0.3	0.998991790717395\\
0.633648234926275	-1\\
0.771320643266746	0.7\\
};
\addplot[forget plot, color=white!15!black, line width=2.0pt] table[row sep=crcr] {%
0	0\\
1	0\\
};
\addlegendentry{True}

\addplot[ycomb, color=blue, line width=2.0pt, mark size=4.0pt,mark=diamond*, mark options={solid, blue}] table[row sep=crcr] {%
0	-0.454127111857697\\
0.05	-0.322210498671331\\
0.3	0.998991783932209\\
0.6	-0.32703523016219\\
0.65	-0.67296473719843\\
0.75	0.401510857397635\\
0.8	0.298488892006686\\
};
\addplot[forget plot, color=white!15!black, line width=2.0pt] table[row sep=crcr] {%
0	0\\
1	0\\
};
\addlegendentry{Recovered}

\addplot[ycomb, color=green, line width=2.0pt, mark=o, mark options={solid, green}] table[row sep=crcr] {%
0.0207519572864545	-0.776337610529028\\
0.3	0.998991783932209\\
0.633648237958179	-0.999999967360619\\
0.771320642775993	0.699999749404321\\
};
\addplot[forget plot, color=white!15!black, line width=2.0pt] table[row sep=crcr] {%
0	0\\
1	0\\
};
\addlegendentry{Sparsified}

\end{axis}
\end{tikzpicture}%
\caption[Recovery with random piecewise linear measurements in 1D]{Example of recovery with random piecewise linear measurement functions in 1D. The solution recovered by a standard $\ell^1$ solver is not the sparsest one. The sparsification procedure proposed in the paper allows recovering the sparsest solution and recovering \emph{exactly} the sampled function. \label{fig:1Dlinear}}
\end{figure}

\subsubsection{Derivative in 1D}

In this section we set $\Omega=[0,1]$ and $L=D$. We assume that the functions $a_i$ are piecewise constant. In the terminology of \cite{unser2016splines}, this means that we are \emph{sampling splines with splines}. By equation \eqref{eq:defDplusstar}, we see that the functions $\rho_i=(L^+)^*a_i$ are piecewise linear and satisfy $\rho_i(0)=\rho_i(1)=0$. 

In this example, we set the values of $a_i$ on each piece as the realization of independent normally distributed random variables. We divide the interval $[0,1]$ in $10$ intervals of identical length. 
An example of a sampling function is displayed in Figure \ref{fig:1DDSampling}.
\begin{figure}[htp]
	\centering
\input{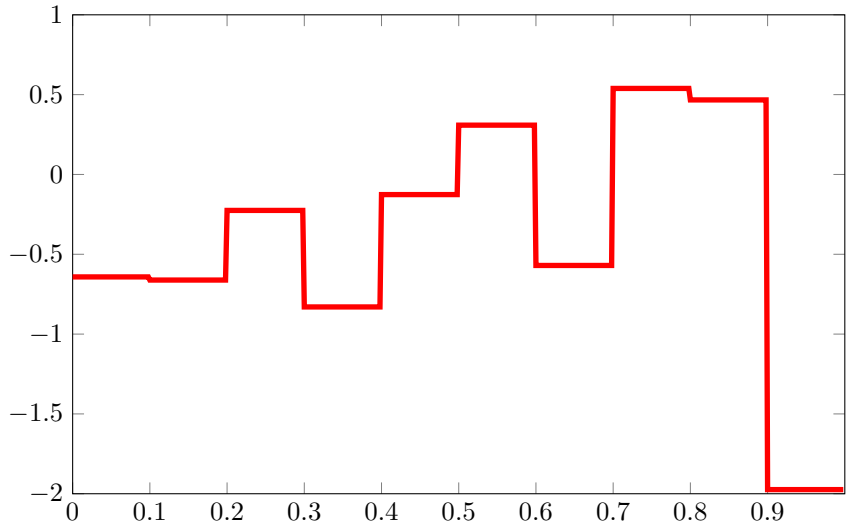}
\caption[Sampling function to probe piecewise constant signals]{Sampling function $a_1$ used to probe piecewise constant signals. The others have a similar structure with other random values on each interval. \label{fig:1DDSampling}}
\end{figure}

The sensed signal $u$ is defined as piecewise constant with jumps occurring outside the grid points. Its values are comprised in $[-1,1]$.

The measurements are obtained according to the following model: $b_i=\langle a_i,u\rangle + \epsilon_i$, where $\epsilon_i$ is the realization of a Bernoulli-Gaussian variable. It takes the value $0$ with probability $0.9$ and takes a random Gaussian value with variance $3$ with probability $0.1$. To cope with the fact that the noise is impulsive, we propose to solve the following problem $\ell^1$ fitted and total variation regularized problem.
\begin{equation}
 \min_{u\in BV(]0,1[)}  \|Du\|_{TV} + \alpha\|Au - b\|_1,
\end{equation}
where $\alpha=1$.

A typical result of the proposed algorithms is shown in Figure \ref{fig:1DDlinear}. 
Here, we probe a piecewise constant signal with 3 jumps (there is a small one in the central plateau) with 42 measurements. Once again, we observe perfect recovery despite the additive noise. This favorable behavior can be explained by the fact that the noise is impulsive and by the choice of an $\ell^1$ data fitting term.

\begin{figure}[htp]
	\centering
\input{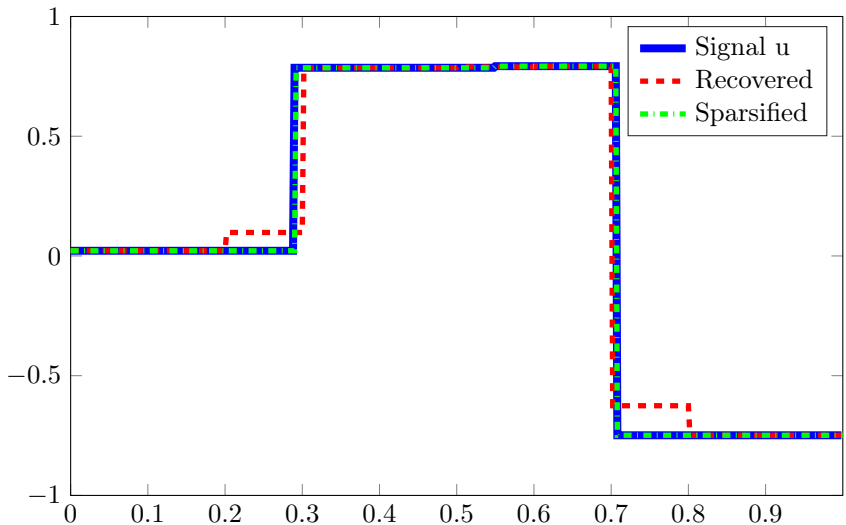}
\caption[Recovery of a piecewise linear signal]{Example of recovery of a piecewise linear signal $u$ with measurements corrupted by Bernoulli-Gaussian noise. Once again, the proposed algorithm implemented with the proposed sparsification procedure recovers the true signal exactly, despite noise.\label{fig:1DDlinear}}
\end{figure}

\subsubsection{Identity in 2D}

In this section, we set $\Omega=[0,1]^2$ and $L=\Id$.
We probe a sparse measure $\mu \in \calM([0,1]^2)$ using real trigonometric polynomials up to order $5$. We then solve the problem \ref{eq:mainproblem} with $f_b$ modeling box-constraints and $A$ being the measurement operator associated with the piecewise functions formed by linearizing the trigonometric polynomials on a regular grid $\set{0,0.1, \dots, 1}^2$. Then, we collapse the resulting solution into a sparser one. To avoid numerical problems, we discarded all peaks with an amplitude less than $10^{-8}$ before the last step.  The results, together with an illustration of the collapsing procedure, are depicted in Figure \ref{fig:Sparsification}.

\begin{figure}[htp] \label{fig:Sparsification}
\centering
% This file was created by matlab2tikz.
%
%The latest updates can be retrieved from
%  http://www.mathworks.com/matlabcentral/fileexchange/22022-matlab2tikz-matlab2tikz
%where you can also make suggestions and rate matlab2tikz.
%
\definecolor{mycolor1}{rgb}{0.00000,0.44700,0.74100}%
\begin{tikzpicture}

\begin{axis}[%
width=1.202in,
height=1.5in,
at={(0.777in,0.368in)},
scale only axis,
xmin=0,
xmax=1,
tick align=outside,
ymin=0,
ymax=1,
zmin=-1,
zmax=1,
view={-27.1}{25.9999999999999},
axis background/.style={fill=white},
axis x line*=bottom,
axis y line*=left,
axis z line*=left,
xmajorgrids,
ymajorgrids,
zmajorgrids
]
\addplot3 [ycomb, color=mycolor1, mark size=2.5pt, mark=*, mark options={solid, fill=mycolor1, mycolor1}, forget plot]
 table[row sep=crcr] {%
0.244459115702494	0.502552972261186	1\\
0.761481386819517	0.249092308285744	-1\\
0.739207782752857	0.748966286054241	1\\
};
 \end{axis}

\begin{axis}[%
width=1.202in,
height=1.5in,
at={(2.456in,0.368in)},
scale only axis,
xmin=0,
xmax=1,
tick align=outside,
ymin=0,
ymax=1,
zmin=-1,
zmax=1,
view={-33.2000000000001}{24.4},
axis background/.style={fill=white},
axis x line*=bottom,
axis y line*=left,
axis z line*=left,
xmajorgrids,
ymajorgrids,
zmajorgrids
]
\addplot3 [ycomb, color=mycolor1, mark size=2.5pt, mark=*, mark options={solid, fill=mycolor1, mycolor1}, forget plot]
 table[row sep=crcr] {%
0	0.666666666666667	0.010626143619589\\
0	0.733333333333333	1.37523026905238e-08\\
0.133333333333333	0.466666666666667	-0.0450921242250679\\
0.2	0.533333333333333	0.236246677354495\\
0.2	0.6	0.024808938992141\\
0.266666666666667	0.266666666666667	0.0542900706448661\\
0.266666666666667	0.466666666666667	0.571592954755445\\
0.266666666666667	0.533333333333333	0.0102937080194551\\
0.333333333333333	0.333333333333333	-0.0464795924861911\\
0.466666666666667	0.466666666666667	0.0296791906479412\\
0.533333333333333	0.266666666666667	-0.0189319739687467\\
0.533333333333333	0.333333333333333	-0.0348865609697112\\
0.666666666666667	0.0666666666666667	0.0109966236047616\\
0.666666666666667	0.266666666666667	0.0162055760941564\\
0.666666666666667	0.466666666666667	0.00893632858406185\\
0.733333333333333	0.266666666666667	-0.522218100589554\\
0.733333333333333	0.333333333333333	-0.100450842317402\\
0.733333333333333	0.6	-0.0955986463855595\\
0.733333333333333	0.733333333333333	0.528459052512244\\
0.733333333333333	0.8	0.312657560253134\\
0.8	0.2	-0.350738926987296\\
0.8	0.666666666666667	0.0610009275187363\\
0.8	0.733333333333333	0.0368420496173009\\
0.866666666666667	0.0666666666666667	0.0101298265793082\\
0.866666666666667	0.333333333333333	0.0439117533420127\\
0.866666666666667	0.4	0.00251637188743046\\
0.933333333333333	0.466666666666667	-0.00117712724707843\\
1	0.666666666666667	0.0106261423957693\\
1	0.733333333333333	1.37523026905255e-08\\
};
 \end{axis}

\begin{axis}[%
width=1.202in,
height=1.5in,
at={(4.135in,0.368in)},
scale only axis,
xmin=0,
xmax=1,
tick align=outside,
ymin=0,
ymax=1,
zmin=-1,
zmax=1,
view={-34.8}{25.2000000000001},
axis background/.style={fill=white},
axis x line*=bottom,
axis y line*=left,
axis z line*=left,
xmajorgrids,
ymajorgrids,
zmajorgrids
]
\addplot3 [ycomb, color=mycolor1, mark size=2.5pt, mark=*, mark options={solid, fill=mycolor1, mycolor1}, forget plot]
 table[row sep=crcr] {%
0.247415796768358	0.486756332843817	0.818133340129395\\
0.736130887189101	0.757074584764968	0.877958662382679\\
0.760118838683341	0.239881161316659	-0.872957027576849\\
0.733333333333333	0.333333333333333	-0.100450842317402\\
0.733333333333333	0.6	-0.0955986463855595\\
0.8	0.666666666666667	0.0610009275187363\\
0.266666666666667	0.266666666666667	0.0542900706448661\\
0.333333333333333	0.333333333333333	-0.0464795924861911\\
0.133333333333333	0.466666666666667	-0.0450921242250679\\
0.866666666666667	0.336946619999544	0.0464281252294432\\
0.533333333333333	0.309881717651864	-0.0538185349384579\\
0.466666666666667	0.466666666666667	0.0296791906479412\\
0.2	0.6	0.024808938992141\\
0.666666666666667	0.266666666666667	0.0162055760941564\\
0.666666666666667	0.0666666666666667	0.0109966236047616\\
0	0.666666752946226	0.0106261573718917\\
1	0.666666752946236	0.010626156148072\\
0.866666666666667	0.0666666666666667	0.0101298265793082\\
0.666666666666667	0.466666666666667	0.00893632858406185\\
0.933333333333333	0.466666666666667	-0.00117712724707843\\
};
 \end{axis}
\end{tikzpicture}%

\includegraphics[width=\textwidth]{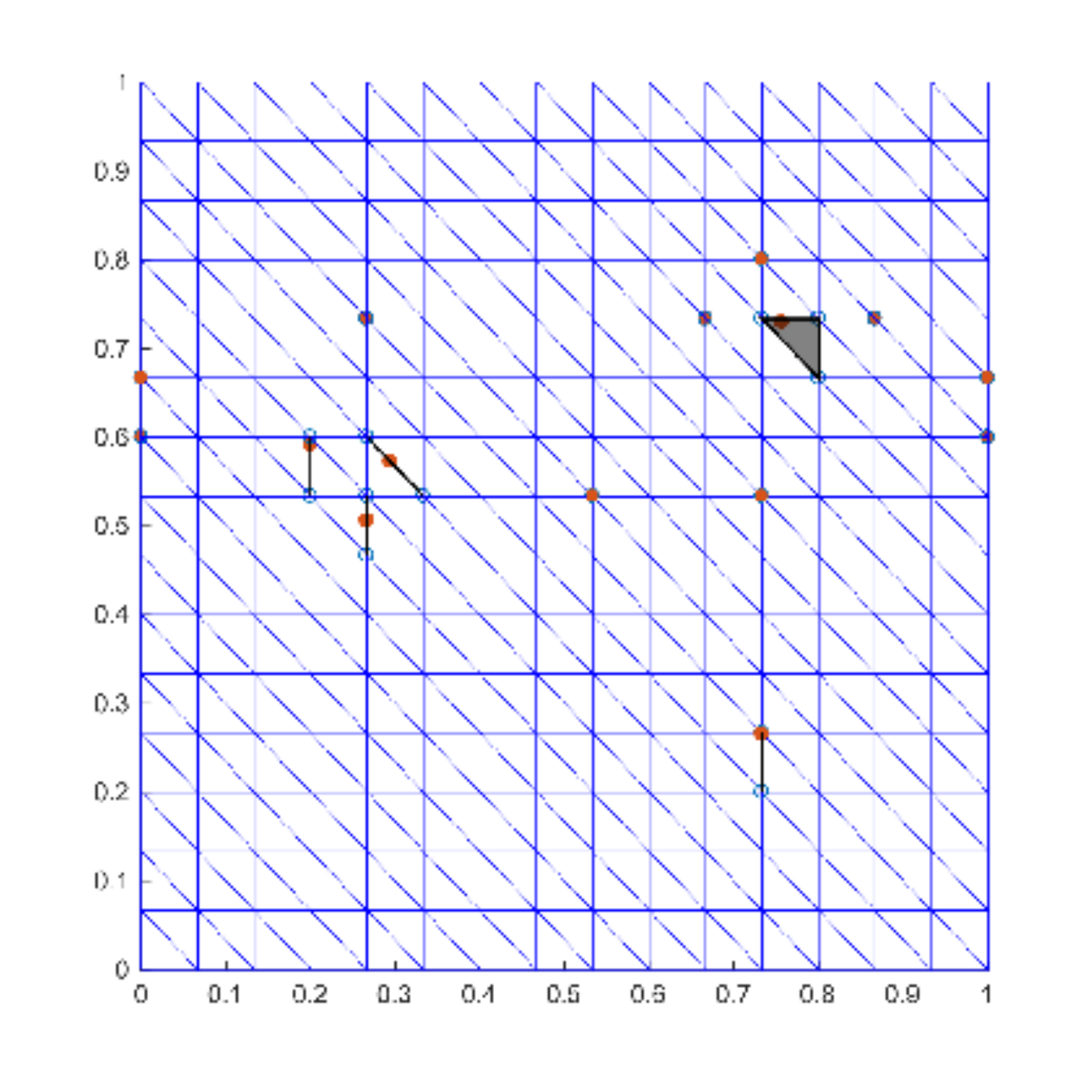}
\caption[Recovery with piecewise linear measurements in 2D]{Top: The ground truth measure, the solution obtained by sampling on the vertices, and the sparsified solution. Bottom: Illustration of the sparsification procedure. The circles  represent the initial solution, while the dots indicate the sparsified solution. A thick trait or a grey region indicates masses that have been merged.}
\end{figure}

\subsection{Trigonometric polynomials.}
We generate $m=35$ trigonometric polynomials $$a_i(t) = \sum_{j=-N}^N \gamma_{j,i} \exp(-2\iota \pi j t)$$ of degree $N=50$ as follows: for $j\geq 0$, we set the coefficients $\gamma_{j,i}$ of the $i$:th polynomial to be
\begin{align*}
	\gamma_{j,i} = \frac{\xi_{j,i}}{\max(j,1)},
\end{align*}
where $\xi_{j,i}$ are i.i.d. normal distributed. For $j<0$, we set $\gamma_{j,i} = \gamma^*_{-j,i}$. This ensures that the functions $a_i$ are real, and furthermore have a good approximation rate with respect to trigonometric polynomials. Seven such functions are depicted in Figure\ref{fig:MeasurementFunctions}. Note that we do not need to worry about $a_i$ not vanishing at $\pm 1/2$, since the functions live on the torus, a manifold without boundary.

\begin{figure}
\center
\includegraphics[width=12cm]{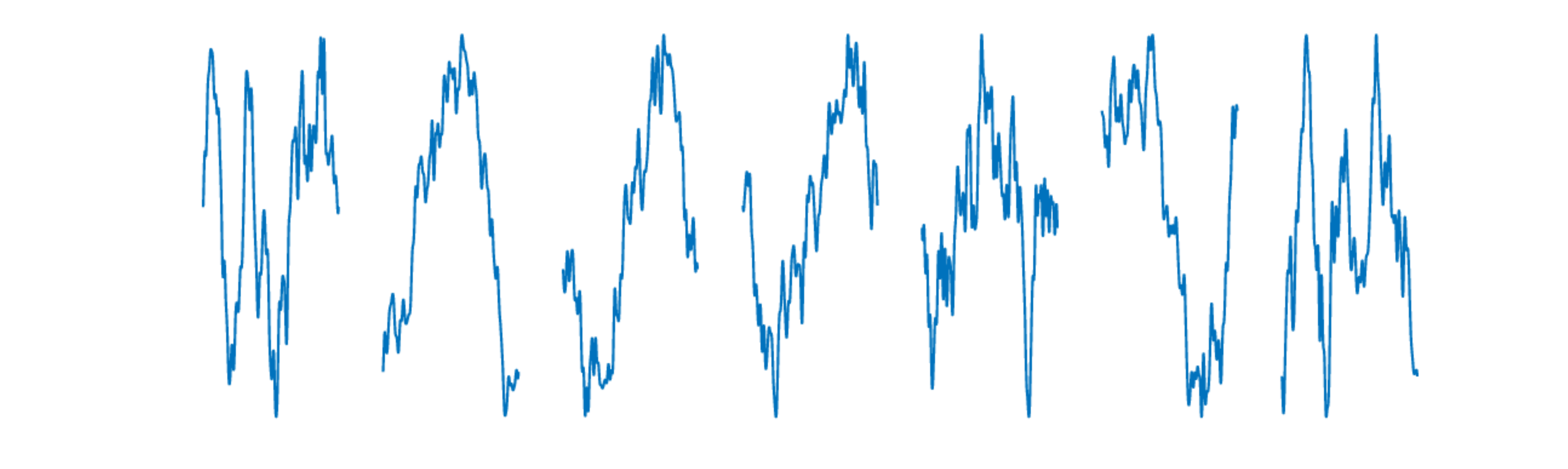}
\caption[Randomly generated trigonometric polynomials]{Seven randomly generated trigonometric polynomials $a_i$.\label{fig:MeasurementFunctions}}
\end{figure}

 We then generate $b \in \R^{m}$ by measuring a ground truth measure $\mu_0 = \sum_{i=-2}^2 c_i \delta_{x_i}$, where $x_i$ are chosen as 
 $$ x_i = \frac{i}{5} + n_i, $$ where $n_i$ are small random displacements, and $i.i.d$ normally distributed amplitudes $(c_i)_{i=1}^5$. Next, for each $K=10,11, \dots, 50$, we solve the problem \ref{eq:mainproblem}, with $A$ being the measurement operator with respect to the functions $$\widetilde{a}_i^K(t) = \sum_{j=-K}^K \gamma_{j,i} \exp(-2\iota \pi j t).$$
 
 In Figure \ref{fig:trig}, we plot the results of the minimization \eqref{eq:mainproblem} with $$f_b(x) = \frac{100 \norm{x}_2^2}{2},$$ depending on $K$. We see that already for $K =30$, the solution is reasonably close to the true solution (at $N=50$) (the relative error in the input, $\Vert{\widetilde{A}\mu_0 -b}\Vert_2 / \norm{b}_2$, for this $K$ is approximately equal to $0.06$). The latter is furthermore essentially equal to the ground truth $\mu_0$.
 
 \begin{figure} [htp]
 \centering
 \input{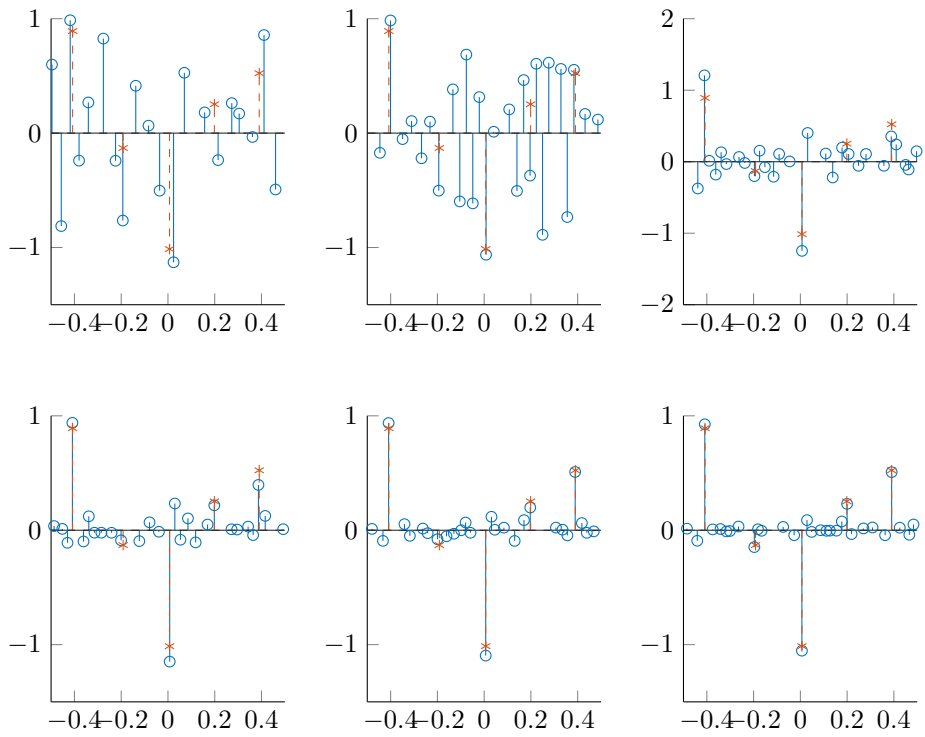}
  \caption[Recovery with trigonometric measurements]{Minimizers of \eqref{eq:mainproblem} ($*$) together with the ground truth $\mu_0$ ($\circ$) for (from above left to below right) $K$ equal to $15,20,25,30,35$ and $40$, respectively. \label{fig:trig}}
 \end{figure}

\section{Proofs}\label{sec:proofs}
In this section, we include all proofs left out in the main text.
\subsection{Structure of solutions}
As was argued already in the main body of the text, the proof of Theorem \ref{thm:structure} can be broken down to a treatment of the problem \eqref{eq:reducedProblem}. In the following, we will carry out the argument proving that the latter problem has a solution of the claimed form.

We first prove the result in finite dimensions and then use a limit argument.
The statement is well known in finite dimension, see e.g. \cite[Theorem 6]{unser2016representer} and \cite{tibshirani1996regression}. We provide a proof for completeness. It has a geometrical flavour.

\begin{lemma} \label{lem:finiteDimensionalStructure}
	Let $m,n \in \N$,  $G\in \R^{m,n}$, $b \in \ran G$ and $m\leq n$. Then a problem of the form
	\begin{align}
		\min_{u \in \R^n} \norm{u}_1  \st G u =b \label{eq:finDimProb}.
    \end{align}	 
has a solution $\hat{u}$ of \eqref{eq:mainproblem} of the form
	\begin{align*}
		\hat{u} = \sum_{k=1}^p c_k e_{i_k},
	\end{align*}
	with $(c_k)_{k=1}^m$ some real scalars and $p\leq m$.
\end{lemma}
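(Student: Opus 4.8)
The plan is to start from an arbitrary minimizer and repeatedly shrink its support, keeping both feasibility and the objective value fixed, until the support has size at most $m$. First I would settle existence of a minimizer: since $b \in \ran G$, the feasible set $\set{u \in \R^n \, \vert \, Gu = b}$ is a nonempty affine subspace, and intersecting it with a sublevel set $\set{\norm{u}_1 \leq R}$ for $R$ large yields a nonempty compact set, on which the continuous function $\norm{\cdot}_1$ attains its (global) minimum. Call a minimizer $\hat u$ and let $S = \supp \hat u$.

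The core is a single reduction step. Suppose $|S| > m$. Then the columns $(G_i)_{i \in S}$ form more than $m$ vectors in $\R^m$, hence are linearly dependent: there exists $v \neq 0$ with $\supp v \sse S$ and $Gv = 0$. For $\abs{t}$ small, each entry $\hat u_i + t v_i$, $i \in S$, retains the sign of $\hat u_i$, so that
\[
	\norm{\hat u + t v}_1 = \norm{\hat u}_1 + t \sum_{i \in S} \sgn(\hat u_i)\, v_i.
\]
Because $G(\hat u + tv) = b$, every such point is feasible, and optimality of $\hat u$ forces the linear coefficient $\sum_{i \in S} \sgn(\hat u_i) v_i$ to vanish; otherwise a suitable choice of the sign of $t$ would strictly decrease the objective. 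Consequently the objective is constant along the whole segment on which the signs are preserved.

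Next I would push $\abs{t}$ to the boundary of that segment. Let $t^*$ be the value of smallest modulus at which some coordinate vanishes, i.e. $\abs{t^*} = \min_{i \in S,\, v_i \neq 0} \abs{\hat u_i / v_i}$, attained at an index $i_0$. For $\abs{t} \leq \abs{t^*}$ no sign change occurs, so by continuity the objective at $t^*$ still equals $\norm{\hat u}_1$, while $\hat u + t^* v$ remains feasible and now vanishes at $i_0$. Thus $\hat u + t^* v$ is again a minimizer, with support of strictly smaller cardinality. Iterating this step reduces the support by at least one each time, so after finitely many steps I obtain a minimizer of the form $\hat u = \sum_{k=1}^p c_k e_{i_k}$ with $p \leq m$, as claimed.

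The hard part is really only the bookkeeping in the reduction step: one must check that moving along the null-space direction $v$ makes some coordinate hit zero \emph{before} any sign flip, which is automatic since sign changes occur exactly at zero crossings, and that the first-order term genuinely vanishes, which is forced by optimality. Everything else is elementary linear algebra. As an alternative with the same geometric flavour, one could instead argue that the solution set of \eqref{eq:finDimProb} is a polyhedron and that each of its vertices, being the intersection of the affine constraint $Gu = b$ with $n$ active facets of a scaled $\ell_1$-ball, must have at most $m$ nonzero entries by a dimension count; I prefer the constructive version above since it directly produces the sparse solution.
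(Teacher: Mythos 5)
Your proof is correct, but it follows a genuinely different route from the paper's. You use the classical null-space reduction: starting from any minimizer supported on $S$ with $\abs{S}>m$, you pick a nonzero $v$ with $\supp v \sse S$ and $Gv=0$, observe that optimality forces the first-order term $\sum_{i\in S}\sgn(\hat u_i)v_i$ to vanish, and then slide along $v$ until a coordinate hits zero, strictly shrinking the support while preserving feasibility and the objective; iterating gives $p\leq m$. All the delicate points are handled: the $\ell_1$-norm is affine in $t$ on the closed interval up to the first zero crossing, so the value at $t^*$ is still optimal, and no new support can appear since $\supp v\sse S$. The paper instead argues geometrically on the image side: it considers the polytope $P=G\set{u \,\vert\, \norm{u}_1\leq\norm{\hat u}_1}$, notes that $b$ must lie on its boundary (else the norm could be decreased), hence on a face of dimension at most $m-1$ whose vertices are images of signed scaled canonical basis vectors, and invokes Carath\'eodory's theorem in that face to express $b$ with at most $m$ such vertices. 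Your version is constructive and yields an explicit sparsification procedure (indeed it is close in spirit to the merging heuristic the paper uses later for piecewise linear measurements), at the cost of an induction on the support size; the paper's version is a one-shot existence argument but leans on the facial structure of the image polytope and Carath\'eodory. The alternative you sketch at the end (vertices of the solution polyhedron) is a third correct variant, closer to the paper's geometric viewpoint.
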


\begin{proof}
%	We may restrict ourselves to the case that $f$ is the indicator function of a closed convex set $\calC$. To see this, note that if $\hat{u}$ solves \eqref{eq:finDimProb}, it will also solve
%	\begin{align*}
%		\min_{u} \norm{u}_1 \st H^*u \in \set{ v \, \vert \, f_b(v) \leq f_b(H^*\hat{u})}.
%	\end{align*}
%	(if there is a $\tilde{u}$ with $\norm{\tilde{u}}_1 < \norm{\hat{u}}_1$ and $f_b(H^*\tilde{u})\leq f_b(H^*\hat{u})$, $\tilde{u}$ surely has a smaller value of the objective function of the problem \eqref{eq:mainproblem}).
	
	Let $u$ be a solution to \eqref{eq:finDimProb} (its existence easily follows from the coercivity of the $1$-norm and the non-emptiness and closedness of the set $G^{-1}(\set{b})$).  The image $b =G u$ then lies on the boundary of the polytope $P=G \set{ u \, \vert \, \norm{u}_1 \leq \norm{u}_1}$ -- if it did not, $b$ would be of the form $G\tilde{u}$ with $\norm{\tilde{u}}_1 < \norm{u}_1$. Then $\tilde{u}$ would be a feasible point with smaller objective value than $u$, which is a contradiction to the optimality of $u$.
	
	The polytope $P$ is at most $m$-dimensional, hence its boundary $\partial P$ consists of faces of dimension at most $m-1$. Having just argued that $b$ lies on that boundary, it must lie on one of those faces, say $F$, which then has dimension at most $m-1$. Concretely, $b \in \mathrm{conv}(\mathrm{vert}(F))$, where $\mathrm{vert}(F)$ denotes the set of vertices of face $F$. The vertices of $F$ are the images by $G$ of a subset of the $\ell_1$-ball's vertices, so they can be written as $\norm{u}_1 \epsilon_k Ge_{i}$, for some $i\in \{1,\hdots, n\}$ and for $\epsilon_k\in \{-1,1\}$.
	Caratheodory's theorem applied in the $(m-1)$-dimensional space $\mathrm{aff } F$ implies that $b$ can be written as
	\begin{align*}
		b=\sum_{k=1}^m  \theta_k \norm{\hat{u}}_1 \epsilon_k Ge_{i_k}
	\end{align*}
	with $\sum_{k=1}^m \theta_k=1$ and $\epsilon \in \set{\pm 1}^m$.  The vector $\norm{u}_1\sum_{k=1}^m  \theta_k \epsilon_k e_{i_k}$ is a  solution of \eqref{eq:finDimProb} of the stated form.
\end{proof}

The strategy will now be to discretize the problem on finer and finer grids, use the previous lemma  and pass to the limit. 

\begin{lemma}\label{lem:discretization}
 Define a sequence of discretizations $(\Omega_n)_{n\in \N}$ of $\Omega$ as
    \begin{equation}
     \Omega_n = \left([-2^n,2^n]^d\cap \frac{\mathbb{Z}^d}{2^n}\right) \cap \Omega.
    \end{equation}
    For $k\in \Omega_n$, define $\omega_n^k$ to be the hypercube of center $k$ and side-length $2^{-n}$ intersected with $\Omega$.
    Let $\mu\in \M(\Omega)$ denote a measure and define the sequence:
    \begin{equation}
      \nu_n=\sum_{k\in \Omega_n} \mu(\omega_k) \delta_k.
    \end{equation}
    
    Then $\nu_n \wstarto \mu$ and $\|\nu_n\|_{TV}\leq \|\mu\|_{TV}$.
\end{lemma}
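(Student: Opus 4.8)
The plan is to treat the two claims separately. The total-variation bound is a direct consequence of the disjointness of the cells, while the weak-$*$ convergence follows from a standard uniform-continuity argument once we control the contribution of the tail $\Omega \setminus B_n$, where $B_n = [-2^n,2^n]^d$. Throughout I regard the cubes $\omega_k$ as chosen pairwise disjoint (say half-open), so that $\{\omega_k\}_{k \in \Omega_n}$ partitions $\Omega \cap B_n$; overlaps on cube boundaries are then $\mu$-irrelevant.

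For the norm bound, recall that $\norm{\nu_n}_{TV} = \sum_{k \in \Omega_n} \abs{\mu(\omega_k)}$. Writing $\abs{\mu}$ for the total-variation measure of $\mu$, the elementary inequality $\abs{\mu(A)} \leq \abs{\mu}(A)$ gives
\begin{align*}
	\norm{\nu_n}_{TV} = \sum_{k \in \Omega_n} \abs{\mu(\omega_k)} \leq \sum_{k \in \Omega_n} \abs{\mu}(\omega_k) = \abs{\mu}(\Omega \cap B_n) \leq \abs{\mu}(\Omega) = \norm{\mu}_{TV},
\end{align*}
where the middle equality uses the disjointness of the cells together with the countable additivity of $\abs{\mu}$.

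For the weak-$*$ convergence, fix $\varphi \in \calC_0(\Omega)$; I must show $\int_\Omega \varphi\, d\nu_n \to \int_\Omega \varphi\, d\mu$. Since $\int_\Omega \varphi \,d\nu_n = \sum_{k} \varphi(k)\mu(\omega_k)$ and the cells partition $\Omega \cap B_n$, I split the difference as
\begin{align*}
	\int_\Omega \varphi\, d\nu_n - \int_\Omega \varphi\, d\mu = \sum_{k \in \Omega_n} \int_{\omega_k} \left(\varphi(k) - \varphi(x)\right)\,d\mu(x) \; - \int_{\Omega \setminus B_n} \varphi\, d\mu .
\end{align*}
The tail term is bounded by $\norm{\varphi}_\infty \, \abs{\mu}(\Omega \setminus B_n)$, which tends to $0$ because $\abs{\mu}$ is a finite measure and the sets $\Omega \setminus B_n$ decrease to the empty set. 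For the first term I use that every $\varphi \in \calC_0(\Omega)$ is uniformly continuous: given $\epsilon > 0$ there is $\delta > 0$ with $\abs{\varphi(x) - \varphi(y)} < \epsilon$ whenever $\norm{x-y}_2 < \delta$. Each $x \in \omega_k$ satisfies $\norm{x-k}_2 \leq \tfrac{\sqrt d}{2} 2^{-n}$, so for $n$ large enough $\abs{\varphi(k) - \varphi(x)} < \epsilon$ on every cell, whence
\begin{align*}
	\abs{\sum_{k \in \Omega_n} \int_{\omega_k} \left(\varphi(k) - \varphi(x)\right)\,d\mu(x)} \leq \epsilon \sum_{k \in \Omega_n} \abs{\mu}(\omega_k) \leq \epsilon \norm{\mu}_{TV}.
\end{align*}
Letting $n \to \infty$ and then $\epsilon \to 0$ yields the claim.

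The only genuinely non-routine ingredient is the uniform continuity of functions in $\calC_0(\Omega)$, which I would record as a short preliminary observation: such a $\varphi$ is uniformly small outside a compact set $C \subset \Omega$, and since $C$ has positive distance to $\partial \Omega$ it admits a compact neighbourhood inside $\Omega$ on which $\varphi$ is uniformly continuous; combining the two estimates gives global uniform continuity. The remaining care concerns only the bookkeeping of cube boundaries, disposed of by the half-open convention.
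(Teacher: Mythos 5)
Your argument is correct in substance and shares the paper's overall strategy (a cell-by-cell discretization error controlled by uniform continuity, plus a residual term), but the two proofs split the error differently. The paper fixes a compact set $K$ outside of which the test function $\phi$ is smaller than $\epsilon$, bounds the contribution of $\Omega\setminus K$ by $\epsilon(\norm{\mu}_{TV}+\norm{\nu_n}_{TV})$, and only needs uniform continuity of $\phi$ \emph{on} $K$; this avoids any global statement about $\calC_0(\Omega)$. You instead take the residual to be the part of $\Omega$ not reached by the grid, control it by continuity from above of the finite measure $\abs{\mu}$, and pay for this by needing (and correctly proving) that every element of $\calC_0(\Omega)$ is \emph{globally} uniformly continuous on $\Omega$ --- a clean auxiliary fact the paper does not isolate. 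Your version arguably localizes the analysis more transparently; the paper's version is shorter because the vanishing-at-infinity hypothesis does double duty.

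One point of imprecision, which you should repair: the cells $\set{\omega_k}_{k\in\Omega_n}$ do \emph{not} in general partition $\Omega\cap B_n$, because a point $x\in\Omega\cap B_n$ may lie in a dyadic cube whose center falls outside $\Omega$ (near $\partial\Omega$) or just outside $[-2^n,2^n]^d$; such $x$ belongs to no $\omega_k$. Your displayed identity therefore needs an extra term $-\int_{R_n}\varphi\,d\mu$ with $R_n=(\Omega\cap B_n)\setminus\bigcup_{k\in\Omega_n}\omega_k$. This is harmless: any fixed $x\in\Omega$ has positive distance to $\Omega^c$, so for $n$ large the cube containing $x$ has its center in $\Omega$, whence $\one_{R_n}\to 0$ pointwise and $\abs{\mu}(R_n)\to 0$ by dominated convergence --- the same mechanism you already use for $\Omega\setminus B_n$. (The paper's own write-up glosses over the analogous boundary bookkeeping, so this is a matter of completing the argument rather than a flaw in the approach.)
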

\begin{proof}
	First, it follows directly from the definition of the total variation that
	\begin{align}
	\norm{\nu_n}_{TV} = \sum_{k \in \Omega_n} \abs{\mu(\omega_k)} \leq \norm{\mu}_{TV}.
	\end{align}
	We now need to prove that for each $\phi \in \calM^*$, $\sprod{\nu_n, \phi} \to \sprod{\mu, \phi}$. So fix $\phi$ and let $\epsilon>0$. Since $\phi \in \M^*$, there exists a compact set $K$ with the property $\abs{\phi(x)} < \epsilon$ for $x \notin K$.  Since $\phi$ is equicontinuous on $K$, there exists a $\delta>0$ so that if $\norm{x-x'}_\infty<\delta,$ $\abs{\phi(x)-\phi(x')}<\epsilon$. If we choose $n$ so large so that $2^{-n}<\delta$, we will have
	\begin{align*}
		\abs{\sprod{\mu - \nu_n, \phi}} &\leq \int_{\Omega \backslash K} \abs{\phi}d(\abs{\mu}+\abs{\nu_n}) + \abs{ \int_K \phi d\mu - \int_K \phi d \nu_n} \\
		&\leq  \epsilon ( \norm{\mu}_{TV} + \norm{\nu_n}_{TV}) + \abs{ \sum_{k \in \Omega_n} \int_{\omega_k} \phi d\mu - \phi(k) \mu(\omega_k)} \\
		& \leq 2\epsilon \norm{\mu}_{TV} +  \sum_{k \in \Omega_n} \int_{\omega_k} \abs{\phi(\ell)-\phi(k)} d\mu(\ell) \\
		&\leq 2\epsilon \norm{\mu}_{TV} + \epsilon \sum_{k \in \Omega_n}  \abs{\mu(\omega_k)} \\
		&\leq 3 \epsilon \norm{\mu}_{TV}.
	\end{align*}
	Since $\epsilon>0$ was arbitrary, the claim follows.
\end{proof}

When passing to the limit in our limit argument, we we will need the following continuity property of the operator $AL^+$:
\begin{lemma} \label{lem:weakStarWeak}
	The operator $AL^+: \calM \to \R^m$ is weak-$*$-weak continuous. That is, if $\mu_n \wstarto \mu$, $AL^+\mu_n \to AL^+\mu$. The same is true for $H=\Pi_{X^\perp}A$.
\end{lemma}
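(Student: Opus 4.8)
The plan is to reduce the claim to the very definition of weak-$*$ convergence by exhibiting each component of $AL^+$ as a pairing against an element of the predual $\calC_0(\Omega)$. First I would note that since $A$ acts by $(Au)_i = \sprod{a_i, u}$, the $i$-th component of $AL^+\mu$ is
$\sprod{a_i, L^+\mu} = \sprod{(L^+)^* a_i, \mu}$,
where $L^+$ has been moved onto the measure via its adjoint; this is legitimate because $L^+$ is continuous, as established in the discussion following Assumption \ref{assump:L}.

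The decisive step is to invoke Assumption \ref{assump:a}, which asserts precisely that $(L^+)^* a_i$ can be identified with a function $\rho_i \in \calC_0(\Omega)$, so that $\sprod{(L^+)^* a_i, \mu} = \int_\Omega \rho_i \, d\mu$. Since $\calC_0(\Omega)$ is the predual of $\M(\Omega)$, each $\rho_i$ is a legitimate test function for weak-$*$ convergence. Hence if $\mu_n \wstarto \mu$, then by definition $(AL^+\mu_n)_i = \int_\Omega \rho_i \, d\mu_n \to \int_\Omega \rho_i \, d\mu = (AL^+\mu)_i$ for each of the finitely many indices $i = 1, \dots, m$. Because $\R^m$ is finite dimensional, componentwise convergence is equivalent to norm convergence, giving $AL^+\mu_n \to AL^+\mu$. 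For the operator $H = \Pi_{X^\perp} A L^+$, I would simply compose with $\Pi_{X^\perp}$, the orthogonal projection onto the finite-dimensional subspace $X^\perp \sse \R^m$; being a linear map between finite-dimensional spaces it is automatically norm-continuous, so it preserves the convergence just established.

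The only genuine subtlety — and the reason the statement is not entirely trivial — is exactly the point flagged after Assumption \ref{assump:a}: we view $\M$ as a Banach space, so its dual $\M^*$ is strictly larger than $\calC_0(\Omega)$. A priori, $(L^+)^* a_i$ merely lies in $\M^*$, and weak-$*$ convergence does \emph{not} test against all of $\M^*$. Thus the whole argument hinges on Assumption \ref{assump:a} upgrading $(L^+)^*a_i$ from a general element of $\M^*$ to an element of the predual $\calC_0(\Omega)$. This is where I expect the reader's attention should be directed; once that identification is in hand, the remaining manipulations are routine.
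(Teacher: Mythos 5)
Your argument is correct and is essentially identical to the paper's proof: both move $L^+$ onto the measure via its adjoint, invoke Assumption \ref{assump:a} to identify $(L^+)^*a_i$ with $\rho_i\in\calC_0(\Omega)$ so that each component is a legitimate weak-$*$ pairing, and conclude for $H$ by continuity of $\Pi_{X^\perp}$. Your closing remark correctly pinpoints why Assumption \ref{assump:a} is the crux of the matter.
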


\begin{proof}
We simply need to note that $\mu_n \wstarto \hat{\mu}$ and assumption \ref{assump:a} implies that
	\begin{equation}
	\sprod{a_i, L^+\mu_n} = \sprod{(L^+)^*a_i, \mu_n} = \sprod{\mu_n , \rho_i} \to \sprod{\mu, \rho_i} = \sprod{a_i, L^+\mu} = \sprod{(L^+)^*a_i, \mu},
\end{equation}
and that $\Pi_{X^\perp}$ is continuous.
\end{proof}

Now let us prove that the optimal value of the problem \eqref{eq:mainproblem} can be found by solving slightly perturbed discretized problems.
\begin{lemma} \label{lem:rightDiscrete}
	Let $b \in \ran H$. There exists a sequence $(b_n)_{n\in \N}$ of vectors in $\R^m$ with the following properties
\begin{itemize}	
\item For each $n$, $b_n$ is in the range of the $n$-th discretized $H$-operator, i.e.
\begin{align*}
	b_n \in H  \vect\left( \delta_{\omega}\right)_{\omega \in \Omega_n}
\end{align*}
\item $b_n$ converges to $b$.
	\item For $n \in \N$, define $\hat{J}_n$ through 
	\begin{align}\label{eq:Pn}
		\hat J_n := \min_{c \in \R^{|\Omega_n|}} \norm{c}_1 \st  H L^+\left(\sum_{k \in \Omega_n}c_k \delta_{k}\right)=b_n. \tag{$\calP_n$}
	\end{align}
Then $\liminf_{n \to \infty} \hat{J}_n \leq \hat J$, where $\hat J$ is the optimal value of problem \eqref{eq:reducedProblem}.
	\end{itemize}
\end{lemma}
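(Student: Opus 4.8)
The plan is to construct $b_n$ as the image, under the measurement operator $HL^+$ appearing in \eqref{eq:Pn}, of the \emph{discretization of a single fixed minimizer} of the reduced problem \eqref{eq:reducedProblem}. The point of this choice is that such a discretization is automatically feasible for \eqref{eq:Pn}, and its $\ell_1$-cost is controlled by Lemma \ref{lem:discretization}, so the bound on $\hat J_n$ falls out immediately. This is why I want one fixed optimal measure rather than an $\epsilon$-minimizing sequence: the sequence $(b_n)$ must be pinned down once and for all.

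First I would record that \eqref{eq:reducedProblem} attains its infimum. Since $b \in \ran H$ the feasible set is nonempty, hence $\hat J < \infty$. By Lemma \ref{lem:weakStarWeak} the map $HL^+ : \M \to X^\perp$ is weak-$*$-weak continuous, so the constraint set $\set{\mu \in \M \,\vert\, HL^+\mu = b}$ is weak-$*$ closed; intersected with a total-variation ball it is weak-$*$ compact by Banach--Alaoglu (recall $\M=\calC_0^*$). As $\norm{\cdot}_{TV}$ is weak-$*$ lower semicontinuous, a minimizer $\mu^*$ exists with $\norm{\mu^*}_{TV}=\hat J$.

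Next I would discretize $\mu^*$ exactly as in Lemma \ref{lem:discretization}, putting $\nu_n = \sum_{k\in\Omega_n}\mu^*(\omega_k)\,\delta_k$ and defining $b_n := HL^+\nu_n$. Since $\nu_n$ is supported on $\Omega_n$, the vector $b_n$ lies in the image under $HL^+$ of $\vect(\delta_\omega)_{\omega\in\Omega_n}$, which gives the first property. Lemma \ref{lem:discretization} yields $\nu_n\wstarto\mu^*$, and then the weak-$*$-weak continuity from Lemma \ref{lem:weakStarWeak} gives $b_n=HL^+\nu_n \to HL^+\mu^* = b$, the second property. For the third property, note that $\nu_n$ is feasible for \eqref{eq:Pn} by the very definition of $b_n$, with coefficient vector $c_k=\mu^*(\omega_k)$; hence $\hat J_n \leq \norm{c}_1 = \sum_{k\in\Omega_n}\abs{\mu^*(\omega_k)} = \norm{\nu_n}_{TV} \leq \norm{\mu^*}_{TV} = \hat J$, the last inequality being the second conclusion of Lemma \ref{lem:discretization}. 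Thus $\hat J_n\leq\hat J$ for \emph{every} $n$, so in particular $\liminf_{n\to\infty}\hat J_n \leq \hat J$.

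The argument is mostly bookkeeping once Lemmas \ref{lem:discretization} and \ref{lem:weakStarWeak} are available; the only genuinely delicate point is securing the fixed minimizer $\mu^*$, so that a single sequence $(b_n)$ serves all three requirements simultaneously. This is exactly where the Banach--Alaoglu compactness of the feasible set enters, and it is what lets us avoid any $\epsilon$-approximation in the cost estimate.
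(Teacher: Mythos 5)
Your proposal is correct and follows essentially the same route as the paper: fix a minimizer $\hat\mu$ of \eqref{eq:reducedProblem} (the paper obtains it by the argument of Proposition \ref{prop:solvability}, you by an equivalent Banach--Alaoglu argument), discretize it via Lemma \ref{lem:discretization}, set $b_n$ to be the image of the discretization, and conclude from feasibility plus $\norm{\nu_n}_{TV}\leq\norm{\hat\mu}_{TV}$ and Lemma \ref{lem:weakStarWeak}. No substantive difference.
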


\begin{proof}
	%Since, by assumption, $L\B(\Omega) \supseteq \M_a(\Omega)$, we have $LL^+\left( \sum_{k \in \Omega_n}c_k \delta_{k}\right)= \sum_{k \in \Omega_n}c_k \delta_{k}$. Hence, 
	%\begin{equation}
	% J\left(u + L^+\left(\sum_{k \in \Omega_n}c_k \delta_{k}\right)\right) = f_b \left(Au + AL^+\sum_{k \in \Omega_n}c_k \delta_{k}\right) \right) + \norm{c}_1.
	%\end{equation}
	First, we note that problem \eqref{eq:reducedProblem} has a solution $\hat{\mu}$. We skip the proof since it is identical to that of Proposition \ref{prop:solvability}.
	
	 Now, according to Lemma \ref{lem:discretization}, there exists a sequence of measures $\mu_n$ of the form
	\begin{align*}
		\mu_n=\sum_{k \in \Omega_n}c_k \delta_{k}
	\end{align*}
	with $\mu_n\wstarto \hat{\mu}$ and $\norm{\mu_n}_{TV} \leq \norm{\hat \mu}_{TV}$ for each $n$. 
	
	Lemma \ref{lem:weakStarWeak}, again together with the continuity of $\Pi_{X^\perp}$, now implies that $H\mu_n \to H\hat{\mu}=b$.  If we put $b_n = A L^+\mu_n $, $b_n$ is in the range of the $n$-th discretized $A$-operator,  $b_n \to b$, and $\norm{\mu_n}_{TV} \geq \hat{J}_n$. 	 
	This implies
	\begin{align*}
		 \liminf_{n \to \infty} \hat J_n & = \liminf_{n \to \infty} \norm{\mu_n}_{TV} \leq \norm{\hat \mu}_{TV} = \hat J.
	\end{align*}
\end{proof}

We may now prove the main result of this section.

\begin{proof}[Proof of Theorem \ref{thm:structure}]
 By definition $b \in \ran H$. We can hence apply Lemma \ref{lem:rightDiscrete} to construct a sequence  $(b_n)_{n \in \N}$ having the properties stated in the mentioned Lemma.
 
 Now consider the problems \eqref{eq:Pn}. If we write them down explicitely, we see that the minimization over the vectors $c^{(n)}$ are exactly as in Lemma \ref{lem:finiteDimensionalStructure}, with $G=H$ and $m=\overline{m}$. Hence, we can construct a sequence $(\hat{c}_n)$ of solutions, where $\hat c_n$ containing $p_n\leq \overline{m}$ nonzero components for $n\geq \overline{m}$. 
 Thus, we may write
 \begin{align*}
 	\sum_{k\in \Omega_n} \hat c_{n,k} \delta_{k} =  \sum_{\ell=1}^{\overline{m}} d_{n,\ell} \delta_{x_{n,\ell}},
 \end{align*}
 for some $d_n \in \R^{\overline{m}}$ and $X_n=(x_{n,l})_l \in \Omega^{\overline{m}}$. In case $p_n<\overline{m}$, we may repeat positions in the vector $X_n$.

  Now $(d_n)_{n \in \N}$ is bounded, since $\norm{d_n}_1 \leq \hat J_n \leq \hat J_1$ for each $n$. This implies that there exists a subsequence, which we do not rename, such that $d_n$ is converging to $d^* \in \R^{\overline{m}}$. By possibly considering a subsequence of this subsequence, we may assume that $X_n$ converges in $\overline{\Omega}^{\times}$, where $\overline{\Omega}^{\times}$ denotes the one-point-compactification $\Omega$. This means that each of the component sequences $(x_{n,\ell})_{n}$ either converges to a point $x_\ell^*$ in $\Omega$, or diverges to $\infty$, meaning that it escapes every compact subset of $\Omega$.
 
 Consequently, the subsequence $\mu_n = \sum_{\ell=1}^{\overline{m}} d_{n,\ell} \delta_{x_{n,\ell}} \wstarto \sum_{\ell=1}^{\overline{m}} d_\ell^* \delta_{x_\ell^*}=: \mu^*$, where we identify $\delta_{\infty}$ with the zero measure (note that if $x_{n,\ell} \to \infty$, then $\delta_{x_{n,\ell}}\wstarto 0$)).

Lower semi-continuity of the $TV$-norm implies
 \begin{align*}
 	\norm{\sum_{\ell=1}^{\overline{m}} d_\ell^* \delta_{x_\ell^*}}_{TV} 
 	\leq \liminf_{n \to \infty}   \norm{\sum_{\ell=1}^p d_\ell \delta_{x_{n,\ell}}}_{TV}& = \liminf_{n \to \infty} \hat J_n  \leq \hat J,
 \end{align*}
 where we used Lemma \ref{lem:discretization} in the final step. Also, applying Lemma \ref{lem:weakStarWeak} together with the properties of $(b_n)$, we get
 \begin{align*}
 	H\mu^*  = \lim_{n \to \infty} H\mu_n = \lim_{n \to \infty} b_n  =b.
\end{align*}  
Hence, $\sum_{\ell=1}^{\overline{m}} d_\ell^* \delta_{x_\ell^*}$ is a solution of \eqref{eq:mainproblem}, which was exactly what was needed to be proven. (Note that any $x_\ell^* = \infty$ will only cause the linear combination of $\delta$-peaks  to be shorter).
\end{proof}

\subsection{Numerical Resolution} \label{sec:NumProofs}

In this section, we prove the propositions stated in Section \ref{sec:Num}. We begin with the one describing the dual problem of \eqref{eq:mainproblem}.

\begin{proof}[Proof of Proposition \ref{prop:dual}]
Define $g: \B \to \M$ with $g(u):= \|Lu\|_{TV}$. Then $J(u) = f_b(Au) + g(u)$. Standard duality arguments \cite[p.60]{ekeland1999convex} yield:
\begin{equation}
 \min_{u\in \B} J(u) = \sup_{q \in \R^m} - g^*(-A^*q) - f_b^*(q).
\end{equation}
Now, we have:
\begin{align*}
 g^*(z) & = \sup_{u \in \B} \langle z,u\rangle - g(u) \\
 & = \sup_{u \in \B} \langle z,u\rangle - \|Lu\|_{TV} \\
 &=\sup_{v\in V,  u_K \in \ker L} \langle z,v + u_K\rangle - \|Lv\|_{TV} \\
 &= \begin{cases}
     \displaystyle \sup_{v\in V} \langle z,v \rangle - \|Lv\|_{TV} & \textrm{ if } z \in (\ker L)^{\perp}, \\
     + \infty & \textrm{ otherwise}
    \end{cases}\\
&= \begin{cases}
     \displaystyle \sup_{v\in V} \langle z,L^+Lv \rangle - \|Lv\|_{TV} & \textrm{ if } z \in \ran L^*, \\
     + \infty & \textrm{ otherwise}
    \end{cases}   \\
&= \begin{cases}
     \displaystyle \sup_{w\in \ran L} \langle (L^+)^* z, w \rangle - \|w\|_{TV} & \textrm{ if } z \in \ran L^*, \\
     + \infty & \textrm{ otherwise}
    \end{cases}  \\
&= \begin{cases}
     h^*((L^+)^* z) & \textrm{ if } z \in \ran L^*, \\
     + \infty & \textrm{ otherwise}
    \end{cases} 
\end{align*}
We used the closed range theorem, which in particular implies that $\ran L^* = (\ker L)^\perp$ for an operator $L$ with closed range.

For the special case of $\ran L = \M$, we note that
\begin{align*}
	h^*(\phi) = \sup_{\mu \in \calM} \sprod{\phi, \mu} - \norm{\mu}_{TV} = \begin{cases} 0 & \textrm{ if } \norm{\phi}_{\infty} \leq 1. \\ 
	+\infty  & \textrm{ otherwise,} \end{cases}
\end{align*}

Note that the subdifferential of $g$ at every $u\in \B$ reads $\partial g(u) = L^*\partial({\|\cdot\|_{TV}})(Lu)$ (see e.g. \cite[Prop.5.7]{ekeland1999convex}.
The duality relations also follows from standard arguments, see e.g. \cite[p.60]{ekeland1999convex}.
\end{proof}

%First, note that due to $\ran L = \M$, $\ker L^* = (\ran L)^\perp = \M \perp = {0}$. Also, $ \ran L^* = (\ker L)^\perp = V^*$. Hence, $(L^*)^+$ is simply the inverse of the restricted operator $L^* : V^* \to M^*$, and we need to show that $(L^+)^*L^*\psi=\psi$ for all $\psi\in \M^*$ and that $L^*(L^+)^*\xi=\xi$ for all $\xi\in V^*$.
%For all $\mu\in \M$, we have:
%\begin{align*}
% \langle \psi,u\rangle &= \langle (L^+)^*L^*\psi,\mu\rangle =\langle \psi, LL^+ \mu\rangle = \langle \psi, \mu \rangle,  
%\end{align*}
%where we in the last step used $\ran L = \M$. Similarly, for $\chi \in V^*$ and $v \in V^*$ arbitrary, we have
%\begin{align*}
% \langle \xi,v\rangle &= \langle L^*(L^+)^*\xi,v\rangle =\langle \xi, L^+L v\rangle = \langle \xi, v \rangle,  
%\end{align*}
%since $v\in V$.
%{proof}

Next, we prove the proposition describing how to construct a primal solution from a dual one in the case that $\ran L= \M$.
\begin{proof}[Proof of Proposition \ref{prop:dualtoprimal}]
 We have for any operator $L$ obeying assumption \ref{assump:L}
 \begin{equation*}
 	(L^+)^*L^*= (LL^+)^* = \Pi_{\ran L}^* = \Id, \ L^*(L^+)^* = (L^+L)^* = j_V^*.
 \end{equation*}
 By construction, $A^*\hat q$ and  $L^* \partial({\|\cdot\|_{TV}})(L\hat u)$ are elements of $\ran L^*$. Due to the closed range theorem, $\ran L^*$ is isomorphic to the annihilator $V$. On that space, $j^*_V$ is injective. Hence, the inclusion \eqref{eq:primaldual} is equivalent to
 \begin{equation} \label{eq:dualreveal}
 (L^+)^*(A^*\hat q) \in \partial(\|\cdot\|_{TV})(L\hat u).
 \end{equation} 

Now, it is well known (see for instance \cite{duval2015exact}), that for all $\mu \in \M$,
\begin{equation} \label{eq:SubDiff}
 \partial(\|\cdot\|_{TV})(\mu)=\left\{\eta \in \M^*, \|\eta\|_{\infty}\leq 1, \int_{\Omega} \eta(t)\,d\mu(t) = \|\mu\|_{TV}\right\}.
\end{equation}
Consequently, \eqref{eq:dualreveal} tells us that the continuous function $(L^+)^*(A^*\hat q)$ has modulus $1$ $L\hat u$-almost everywhere on $\supp (L\hat u)$. This means that $$(L \hat u) ( I(\hat q) \backslash \supp(L \hat u)) = 0.$$ In particular, if the set $I$ only consists of isolated points, we get $\supp (L \hat u ) \subseteq I$. Hence, there exists $(d_k)_{1 \leq k \leq p}$ with
\begin{equation}
 L \hat u = \sum_{k=1}^p d_k \delta_{x_k} \, \Longrightarrow \,  \hat u = u_K + \sum_{k=1}^p d_k L^+\delta_{x_k}
\end{equation}
for some $u_K \in \ker L$.
\end{proof}

Next, we prove the claim about the structure and possible numerical resolution of the optimal measure $\hat{\mu}$ in the case of piecewise linear $\rho_i$
\begin{proof}[Proof of Proposition \ref{prop:PiecewiseLinear}]
Let $\hat{q}$ , and  $L^+\mu^*+ u_K$ be any solution of $\min_{u \in \B} J(u)$ with $\supp \ \mu^* \sse \cup_{\ell=1}^n F_\ell$, where $F_\ell$ are the faces described above (such a solution exists due to Theorem \ref{thm:structure}). Standard duality arguments (see for instance \cite[prop. 4.1]{ekeland1999convex}) yield that $\hat{q}$ and $L^+\mu^*+ u_K$ satisfies the primal-dual conditions \ref{eq:primaldual}, i.e. in particular \ref{eq:dualreveal}, since $\ran L = \M$.

 It is clear that any atomic measure $\mu= \sum_{j=1}^n d_j \delta_{\overline{x}_j}$ with $\sprod{\rho_i, \mu^*} = \sprod{\rho_i, \mu}$ for each $i$ and $\norm{\mu}_{TV} = \norm{\mu^*}_{TV}$, $L^+\mu^*+ u_K$  also is a solution to $\min J(u)$. Such a measure can be constructed as follows: Suppose that there exists a face $P$ of at least dimension $1$ of a polytope $F_j$ such that $\supp \ \mu_*$ intersects in at least one point $p$ of the relative interior of $P$ (if no such $P$ exists, $\mu_*$ is already atomic). Due to $\supp \mu_* \sse I(\hat{q})$, $(L^+)^*A^*q$ has absolute value $1$ in $p$. $(L^+)^*A^*q$ being a continuous piecewise linear function with absolute value bounded by one, it must therefore have a constant value $\varepsilon$, either equal to $+1$ or $-1$, on $P$. Due to the structure \eqref{eq:SubDiff} of the subdifferential of the $TV$-norm, this implies that $\mu_*$ (the unimodular part of the polar decomposition of $\mu_*$ to be exact) must have the same sign as $\epsilon$ almost everywhere on $P$.
 
 On $P$, each function $\rho_i$ can be written as $\rho_i(x) = \sprod{\alpha_{ij}, x} +\beta_{ij}$, for some vectors $\alpha_{ij} \in \R^d$ and scalars $\beta_{ij} \in \R$. If we hence define
$$ d = \epsilon\abs{\mu}^*(P), \quad \overline{x}= \frac{1}{\abs{\mu}^*(P)}\int_{P} x d\abs{\mu}^* \in P, $$
and $\mu = d\overline{x} + \mu^* \vert_{\Omega \backslash P}$, we have $\norm{\mu}_{TV} =  \abs{d}  + \norm{\mu^*\vert_{\Omega \backslash P}} =  \norm{\mu^*\vert_P}_{TV} + \norm{\mu\vert_{\Omega \backslash P}}= \norm{\mu^*}_{TV}$, and
\begin{align*}
	\sprod{\rho_i, \mu} = \sum_{j=1}^n \left( \sprod{\alpha_{ij},\overline{x}_j} + \beta_{ij} \right) \mu^*(F_j) = \sum_{j=1}^n \int_{F_j} \left( \sprod{\alpha_{ij},x} + \beta_{ij} \right) d\mu^* = \sprod{\rho_i, \mu^*}.
\end{align*}
By iteratively removing all such non-atomic parts of $\mu_*$, we obtain an atomic solution $\mu$.

We still need to prove that we can find a $\mu^*= \widetilde{\mu}$ which is atomic and supported on the vertices of $F_j$. Note that each $\overline{x}_j$ can be represented as a convex combination $\sum_{k=1}^{t_j} \theta_k v_{jk}$ of the vertices $v_{jk}$ of $F_j$. Defining a measure
\begin{align*}
	\widetilde{\mu} = \sum_{j=1}^n \sum_{k=1}^{t_j} \theta_k \mu(F_j) \delta_{v_{jk}},
\end{align*}
we see that $\norm{\mu}_{TV} = \Vert \widetilde{\mu} \Vert_{TV}$ and 
\begin{align*}
\langle \rho_i, \widetilde{\mu}\rangle = \sum_{j=1}^n  \sum_{k=1}^{t_j} \theta_k \left( \sprod{\alpha_{ij},v_{jk}} + \beta_{ij} \right) \mu(F_j) = \sum_{j=1}^n \left( \sprod{\alpha_{ij},\overline{x}_j} + \beta_{ij} \right) \mu(F_j) = \sprod{\rho_i, \mu},
\end{align*}
so that $\widetilde{\mu}$ is also a solution.

\end{proof}

Finally, we provide the argument that the constraint of the dual problem can be rewritten as an inequality on the space of Hermitian matrices in the case of the functions $\rho_i$ begin trigonometric polynomials.
\begin{proof}[Proof of Lemma \ref{lem:trigo}]
 Note that $|\sum_{i=1}^m \alpha_i \rho_i|\leq 1$ is equivalent to
 \begin{align*}
  1& \geq \abs{\sum_{i=1}^m \alpha_i \sum_{j=-K}^K \gamma_{i,j} p_j} = \abs{\sum_{j=-K}^K \sum_{i=1}^m \alpha_i  \gamma_{i,j} p_j} =\abs{\sum_{j=-K}^K (\Gamma\alpha)_j p_j} =\abs{p_{-K}\sum_{j=-K}^K (\Gamma\alpha)_j p_j}.
 \end{align*}
The function $f=p_{-K}\sum_{j=-K}^K (\Gamma\alpha)_j p_j$ is a causal trigonometric polynomial. 
We know from \cite[Cor.4.27]{dumitrescu2007positive} that it obeys the constraint $\|f\|_\infty\leq 1$ if and only if there exists a positive semi-definite matrix $Q\in \C^{(2K+1)\times (2K+1)}$ such that
\begin{equation}
\begin{bmatrix}
Q & \Gamma \alpha \\
(\Gamma \alpha)^* & 1
\end{bmatrix}\succeq 0 \textrm{ and }
\sum_{i=1}^{2K+2-j} Q_{i,i+j}=
\begin{cases}
1, & j=1, \\
0, & 2\leq j \leq 2K+1.                                                                                                
\end{cases}
\end{equation}
\end{proof}

\subsection{Differential operators of Section \ref{ex:DiffOp}}

Here, we provide the proofs for the lemmas and propositions which include more general differential operators into our framework. We begin by proving a preparatory lemma about the operator $L^+$ in \eqref{eq:LPlusDiffOp}.
\begin{lemma} \label{lem:LPlusBP}
	The operator $L^+$ defined by \eqref{eq:LPlusDiffOp} is a continuous operator from $\M(\Omega)$ to $\calC(\Omega)$. It has the property $P(D) L^+ =\Id_{\M(\Omega)}$.
\end{lemma}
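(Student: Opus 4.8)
The plan is to verify the three assertions in turn: that $L^+\mu$ is a well-defined continuous function for each $\mu$, that the resulting linear map is bounded, and that $P(D)$ inverts $L^+$ from the left. The only structural input is Assumption \ref{assumption:PD}, which provides the continuous bounded family $y \mapsto u_y \in \calC(\Omega)$ with $P(D)u_y = \delta_y$ and $M := \sup_{y\in\Omega}\norm{u_y}_\infty < \infty$.

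First I would record that $(x,y) \mapsto u_y(x)$ is jointly continuous on $\Omega\times\Omega$: for $(x_n,y_n)\to(x,y)$ one splits $\abs{u_{y_n}(x_n)-u_y(x)} \leq \abs{u_{y_n}(x_n)-u_y(x_n)} + \abs{u_y(x_n)-u_y(x)}$, where the first term is bounded by $\norm{u_{y_n}-u_y}_\infty \to 0$ (continuity of $y\mapsto u_y$ in sup-norm) and the second tends to $0$ since $u_y\in\calC(\Omega)$. In particular $y\mapsto u_y(x)$ is continuous and bounded by $M$, hence $\mu$-integrable because $\mu$ has finite total variation, so $(L^+\mu)(x)=\int_\Omega u_y(x)\,d\mu(y)$ is well defined. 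To see $L^+\mu\in\calC(\Omega)$, fix $x_n\to x$; then $u_y(x_n)\to u_y(x)$ pointwise in $y$ with the uniform domination $\abs{u_y(x_n)}\leq M$, so dominated convergence against the finite measure $\abs{\mu}$ gives $(L^+\mu)(x_n)\to(L^+\mu)(x)$.

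Boundedness is then immediate: for every $x$,
\begin{align*}
\abs{(L^+\mu)(x)} \leq \int_\Omega \abs{u_y(x)}\,d\abs{\mu}(y) \leq \int_\Omega \norm{u_y}_\infty\,d\abs{\mu}(y) \leq M\norm{\mu}_{TV},
\end{align*}
so that $\norm{L^+\mu}_\infty \leq M\norm{\mu}_{TV}$; together with the evident linearity of the integral in $\mu$, this shows $L^+:\M(\Omega)\to\calC(\Omega)$ is continuous.

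The identity $P(D)L^+=\Id_{\M(\Omega)}$ I would establish distributionally. Fix $\mu\in\M(\Omega)$ and a test function $\varphi\in\calD(\Omega)$, and let $\widetilde{P}(D)$ denote the formal transpose of $P(D)$, so that by definition $\langle P(D)u_y,\varphi\rangle = \int_\Omega u_y(x)(\widetilde P(D)\varphi)(x)\,dx = \varphi(y)$ for every $y$. Then
\begin{align*}
\langle P(D)L^+\mu,\varphi\rangle = \int_\Omega (L^+\mu)(x)\,(\widetilde P(D)\varphi)(x)\,dx = \int_\Omega\!\int_\Omega u_y(x)(\widetilde P(D)\varphi)(x)\,d\mu(y)\,dx.
\end{align*}
The integrand is jointly continuous and dominated by $M\abs{(\widetilde P(D)\varphi)(x)}$, which is integrable against $dx\otimes\abs{\mu}$ since $\varphi$ is compactly supported and $\mu$ finite; Fubini's theorem therefore permits swapping the order of integration, giving
\begin{align*}
\langle P(D)L^+\mu,\varphi\rangle = \int_\Omega\Big(\int_\Omega u_y(x)(\widetilde P(D)\varphi)(x)\,dx\Big)d\mu(y) = \int_\Omega\varphi(y)\,d\mu(y) = \langle\mu,\varphi\rangle.
\end{align*}
Since $\varphi$ was arbitrary, $P(D)L^+\mu=\mu$. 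The main obstacle is precisely this interchange of the distributional operator $P(D)$ with the vector-valued integral defining $L^+$: it is what forces the boundedness and joint continuity of $y\mapsto u_y$ in Assumption \ref{assumption:PD} to be used, via Fubini, rather than being merely cosmetic.
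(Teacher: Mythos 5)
Your proposal is correct and follows essentially the same route as the paper's proof: dominated convergence for the continuity of $L^+\mu$, the uniform bound $\sup_y\norm{u_y}_\infty$ for operator continuity, and a Fubini interchange against a test function combined with $P(D)u_y=\delta_y$ for the left-inverse identity. The only cosmetic difference is that you make the joint continuity of $(x,y)\mapsto u_y(x)$ explicit where the paper uses it implicitly.
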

\begin{proof}
Let us begin by showing that $L^+$ maps from $\M(\Omega)$ to $\calC(\Omega)$. First, note that the continuity of $x \mapsto u_x$ implies that $L^+\mu$ is pointwise well-defined. We still need to show that for a fixed $\mu$, the map $x \mapsto (L^+ \mu)(x)$ is continuous. This follows from a standard ``limits and integrals commute'' argument. Let $x_n \to x$. Then $u_y (x_n) \to u_y(x)$ pointwise. Furthermore, $\abs{u_y(x_n)} \leq \sup_{y \in \Omega} \norm{u_y}_\infty$ for all $y$ and $x_n$. Since $\sup_{y \in \Omega} \norm{u_y}_\infty$ is a $\mu$-integrable function, the theorem of Lebesgue implies that
\begin{align*}
	\lim_{n\to \infty}  (L^+\mu)(x_n)\lim_{n\to \infty} \int_{\Omega} u_y(x_n) d\mu(y) = \int_\Omega u_y(x) d\mu(y) = (L^+\mu)(x).
\end{align*} 
The boundedness of the map now follows from the inequality
\begin{align*}
	\abs{ \int_\Omega u_y(x) d\mu} \leq  \int_\Omega \abs{u_y(x)} d\abs{\mu} \leq \sup_{y \in \Omega} \norm{u_y} \norm{\mu}_{TV}, \ x \in \Omega.
\end{align*}

Now we show that $P(D)L^+\mu = \mu$. For this, let $\phi \in \calC_c^\infty(\Omega)$ be arbitrary. We then have
\begin{align*}
	\int_{\Omega}(L^+\mu)(y)  P(D_y)^*\phi(y) dy = \int_\Omega \int_\Omega u_x(y) P(D_y)^*\phi(y) d\mu(x) dy,
\end{align*}
where $P(D)^*$ denotes the adjoint to $P(D)$. The function $(x,y) \mapsto u_x(y) P(D_y)^* \phi(y)$ is continuous and supported on a set of the form $\Omega \times C$, where $C$ is compact. As such, it is integrable with respect to the measure $\mu \otimes dy$, and we may apply Fubini's theorem. Subsequently shifting $P(D_y)$ onto $u_x$ and utilizing $P(D_y) u_x = \delta_x$, we obtain that the above is equal to
\begin{align*}
	\int_\Omega \phi(x) d\mu(x).
\end{align*}
This exactly means that $P(D) L^+\mu = \mu$.
\end{proof}

Now we may prove Lemma \ref{lem:BP} about the properties of $\B_P$ as a normed space.

\begin{proof}[Proof of Lemma \ref{lem:BP}]
The only non-trivial step in proving that $\norm{u}_{\B_P}$ is a norm is to prove that $\norm{u}_P = 0 \Rightarrow u=0$. This follows from the assumption on the set $K$: If $\norm{u}_{B_P}=0$, then in particular $P(D)u=0$ and $u=0$ in $K$. Since $u=0$ is a function obeying $P(D)u=0$ and $u=0$ in $K$, the uniqueness of the continuation implies that $u$ must vanish everywhere in $\Omega$.

To prove that $\B_P$ is a Banach space, notice that we can interpret $\B_P$ as a subspace of the Banach space $\calM(\Omega) \times \calM(K)$. This space is furthermore closed: If $(P(D)u_n, u_n) \to (\mu, \overline{u})$ in $\calM(\Omega) \times \calM(K)$, there must be $P(D)\overline{u} = \mu$ on $K$. To see this, let $\phi \in \calC_0(K)^\infty$ be arbitrary. We then have $P(D)^*\phi \in \calC_0(K)$, and consequently
\begin{align*}
	\int_{K} \overline{u} P(D)^*\phi dx =  \lim_{n\to \infty} \int_K u_n P(D)^*\phi dx = \lim_{n\to \infty} \int_K  \phi d (P(D)u_n)  = \int_K \phi d \mu,
\end{align*}
where we used the fact $P(D) u_n \to \mu$ in the last step. Since $P(D)L^+\mu = \mu$, we conclude that $P(D)(\overline{u} - L^+\mu) =0$ in $K$. The continuation property implies that there exists a $\widehat{u}$ with $P(D)\widehat{u} = 0$ in $\Omega$ and $\widehat{u} = \overline{u} - L^+\mu$ in $K$. We then have $\mu = P(D)(\widehat{u} + L^+\mu)$ and $(\widehat{u} + L^+\mu )\vert_K = \overline{u}$, so that $(\mu, \overline{u}) \in \B_P$.
\end{proof}

Now let us prove Proposition \ref{prop:DiffOp}

\begin{proof}[ Proof of Proposition \ref{prop:DiffOp}]
	In Lemma \ref{lem:LPlusBP}, we showed that  $P(D)L^+ = \Id_{\M}$. This already proves that  $\ran P(D)=\M(\Omega)$. Also, it shows that $L^+$ is a continuous operator from $\M(\Omega)$ to $\B_P(\Omega)$: $\calC(\Omega) \hookrightarrow \M(K)$ due to the boundedness of $K$, and if $\mu_n \to \mu$ in $\B_P$, then $P(D)L^+\mu_n = \mu_n \to \mu = P(D) L^+\mu$.
 
 It follows that $L^+P(D) =\Id_{\ran{L^+}}$. If we can prove that $\ran L^+$ is closed, we have shown that $\ker P(D)$ has the closed complementary subspace $\ran L^+$.

To show the latter, let $u_n = L^+\mu_n$ in $\ran L^+$ converge to an element $\overline{u} \in \B_P$. Then, by definition of $\B_P$, $P(D)L^+\mu_n = \mu_n \to P(D)\overline{u}$. Consequently, by the continuity of $L^+$,
\begin{align*}
	u_n = L^+ \mu_n = L^+ P(D) L^+\mu_n \to L^+P(D) \overline{u},
\end{align*}
so that $\overline{u} = L^+ P(D) \overline{u} \in \ran L^+$.

It remains to calculate the operator $(L^+)^*$. For $a \in L^1(\Omega)$ and $\mu \in \M(\Omega)$, we have
\begin{align*}
	\sprod{(L^+)^*a,\mu} = \sprod{a, L^+\mu} = \int_\Omega \int_\Omega a(y) u_x(y) d\mu(x) dy
\end{align*}
$(x,y) \to u_x(y) a(y)$ is in $L^1(\mu \otimes dy)$, so that we may apply Fubini and obtain
\begin{align*}
	 \sprod{(L^+)^*a,\mu}\int_\Omega \left(\int_\Omega a(y) u_x(y) dy \right) d\mu(x) .
\end{align*}
The last assertion about $(L^+)^*a \in \calC_0(\Omega)$ is argued as follows. Let $\epsilon>0$. First, since $a \in L^1(\Omega)$, there exists a compact set $C$ such that $\norm{a_{\Omega \backslash C}}_1 \leq \epsilon$. Further, since   the map $x \mapsto u_x$ is vanishing at infinity as a map from $\Omega$ to $\calC(\Omega \cap B_R(0))$, there exists a compact set $\widetilde{C}$ such that if $x \notin \widetilde{C}$, $\norm{u_x}_{\calC(C)}\leq \epsilon$. This implies for such $x$
\begin{align*}
	\abs{(L^+)^*a(x)} &\leq  \norm{u_x}_\infty \int_{\Omega \backslash C}  \abs{a(y)} dy +  \norm{u_{x}}_{\calC(C)} \int_{C}  \abs{a(y)} dy \\
	&\leq \norm{u_x}_\infty \epsilon + \epsilon \norm{a}_1,
\end{align*}
so that the theorem is proved.
\end{proof}

Now let us finally argue that the differential operators of the form \eqref{eq:Elliptic} can be included in our framework.

\begin{proof}[Proof of Proposition \ref{prop:Elliptic}]
Consider the  space $\calH^k_0(\Omega)$, defined as the closure of $\calC_0^\infty(\Omega)$ in the Sobolev norm $\norm{\cdot}_{H^k(\Omega)}$. We can formulate the problem $P(D)u= f$ as an operator equation on $\calH_0^k(\Omega)$ as follows:
	\begin{align*}
		\sprod{P(D)u,v} = \int_\Omega \sum_{\abs{\alpha}=k} \sum_{\abs{\beta}=k} p_{\alpha,\beta}(x) D^\alpha u(x) D^\beta(x) dx = \sprod{f, v}, v \in \calH^k_0(\Omega).
	\end{align*}
	By the Lax-Milgram lemma together with the ellipticity condition, this problem has a unique solution as soon as $f \in \calH^k_0(\Omega)$. Now, since $k> d/2$, we have the continuous Sobolev embedding $\calH^k_0(\Omega) \hookrightarrow \calC_0(\Omega)$. (For $\Omega = \R^d$, this can be proven with Fourier methods, for a bounded domain, this is a Sobolev embedding theorem.)  This both proves that $\delta_x \in \calH^k_0(\Omega)^*$ and that the solution $u_x \in \calC_0(\Omega)$.
	
	To show that the map $x \to u_x$ is vanishing at infinity on compact sets, let us first assume that $\Omega$ is bounded. When $x$ escapes to infinity, $\delta_x \wstarto 0$ in $\calM(\Omega)$, and therefore also in $\calH^k_0(\Omega)$. The ``continuous dependence on the data''-part of Lax-Milgram theorem therefore implies that $u_x \wto \, 0$ in $\calH^k_0(\Omega)$.  Since the embedding $\calH^k_0(\Omega) \hookrightarrow \calC_0(\Omega)$ in this case even is compact (see e.g. \cite[Theorem 6.2]{Adams}), this implies that $u_x \to 0$ in $\calC_0(\Omega)$, which was to be proven.
	
	Now let $\Omega=\R^d$ and $R>0$ be arbitrary. The result \cite[Theorem 10.2.1]{Hormander} states that the solution $u_x$ is equal to $\Phi (\cdot -x)$ for a $\Phi$ obeying 
	\begin{align*}
		\sup_{\xi \in \R^d} \abs{\widetilde{P}(\xi)\widehat{\Phi}(\xi)} < \infty,
	\end{align*}
	where $\widetilde{P}$ is defined as
	\begin{align*}
		\widetilde{P} = \left( \sum_{\alpha \geq 0} \abs{D^\alpha P (\xi)}^2\right)^{1/2}.
	\end{align*}
	By using the ellipticity assumption, one sees that this implies that $(1+ \abs{\xi}^{2k}) \vert \hat{\Psi} \vert \leq C$, which ensures that $\abs{\hat{\Psi}}$ is integrable ($k>d/2$). By the Riemann-Lebesgue theorem, $\Psi \in \calC_0(\R^d)$. This already implies that $u_x = \Psi(\cdot -x)$ vanishes to infinity on compact sets.
	
	To prove the final claim, let $u$ obey $P(D)u=0$ on $K$. Then in particular $u \in \calH^k(K)$. This implies that for every set $K \sse \widetilde{\Omega} \sse \Omega$ with $\mathrm{dist}( K, \partial \Omega)>0$, there exist a function $\tilde{u} \in \calH^k(\R^d)$ with compact support in $\widetilde{\Omega}$ (see \cite[Theorem 2.8]{Adams}) such that $ u = \widetilde{u}$ on $K$. Now consider the following problem:
	\begin{align*}
		\begin{cases} P(D) u = -P(D)\widetilde{u}, \ x\in \Omega \backslash K \\
D^\alpha u \vert_{\partial K \cup \partial\Omega} = 0\end{cases}.		
	\end{align*}
	This problem can be shown to have a solution $\widehat{u}$. Now consider the function
	\begin{align*}
		\overline{u}(x) = \begin{cases} u(x), x \in K \\
		-\widehat{u}(x)+ \widetilde{u}(x) , x\in \Omega \notin K \end{cases}
	\end{align*}
	Due to boundary term cancellation, together with the fact that the $\overline{u}_{K}$ and $\overline{u}_{\Omega \notin K}$ solves the problem $P(D)u$ in their respective domains, this function obeys $P(D)\overline{u}=0$ in $\Omega$, and of course $\overline{u} \vert_K = u$.

%	(To see that the latter is the case, let $\phi \in \calC_c^\infty(\Omega)$ be arbitrary. We have
%	\begin{align*}
%		&\int_{\Omega} \overline{u}(x)\sum_{\abs{\alpha}=k, \abs{\beta}=k} D^\alpha(p_{\alpha,\beta}D^\beta \phi(x))dx \\
%		&= \int_K \overline{u}(x)\sum_{\abs{\alpha}=k, \abs{\beta}=k} D^\alpha(p_{\alpha,\beta}D^\beta \phi(x))dx + \int_{\Omega \backslash K} \overline{u}(x)\sum_{\abs{\alpha}=k, \abs{\beta}=k} D^\alpha(p_{\alpha,\beta}D^\beta \phi(x))dx
%\end{align*}	 
%Now we interatively integrate by parts. Due to the fact that the lower derivatives of $\tilde{u}$ vanish, the boundary terms always cancel, so that we end up with
%\begin{align*}
%	\int_\Omega P(D) \overline{u} \phi(x) dx,
%\end{align*}
%and this vanishes, since $P(D)u=0$ on $K$ and on $\Omega \backslash K$, respectively.

As for the uniqueness of the extension, we note that if $P(D)u=0$ on $\Omega$, the ellipticity assumption implies that $\sum_{\alpha=k} \abs{D^\alpha u}_2^2 =0$. This in particular implies that $\Delta^k u =0$, i.e., $\Delta^{k-1} u$ is harmonic. Since $\Delta^{k-1}u$ vanishes on $K$, and $K$ has non-empty interior, it must vanish everywhere (this is the identity theorem of harmonic functions). By repeating this argument $k$ times, we finally obtain that $u$ vanishes on $\Omega$.
\end{proof}

\subsection{Miscellaneous}\label{sec:misc}

Here, the rest of the left out proofs are given. We start with the simple proposition about existence of solutions to the problem \eqref{eq:mainproblem}.

\begin{proof}[Proof of Proposition  \ref{prop:solvability}.]
Let $(u_n)_{n \in \N}$ be a minimizing sequence for \eqref{eq:mainproblem}. Let us write $u_n = L^+\mu_n + u_{n,K}$ with $\mu_n \in \M(\Omega)$ and $u_{n,K}\in \ker L$ for each $n \in \N$. We may thereby without loss of generality assume that $u_{n,K} \in \vect( w_\ell)_{\ell=1}^{\widehat{m}}$, where $w_\ell$ are vectors such that $(Aw_\ell)_{\ell=1}^{\widehat{m}} $ spans $A(\ker L)$ (any alteration of $u_{K,n}$ not parallel to this space will neither change the value of $\norm{Lu}$ or the value of $f_b(Au)$. 

Now, due to the minimization property of the sequence, $$(\mu_n)_{n \in \N}\text{ and }(f_b(A(L^+\mu_* + u_{n,K}))_{n \in \N}$$ are both bounded. Due to the coercivity of $f_b$ together with the fact that $A$ restricted to the space $(w_\ell)_{\ell=1}^{\widehat{m}}$ is injective, the sequence $(u_{n,K})_{n \in \N}$ will be bounded in $A(\ker L)$. Due to the Banach-Alaoglu theorem and the separability of $\calC_0(\Omega)$ (i.e. the pre-dual of $\M(\Omega)$), $(\mu_n)_{TV}$ will contain a subsequence which converges to, say, $\mu^*$. Similarly, since $(u_{n,K})$ lives in the finite-dimensional space $\vect(w_\ell)_{\ell=1}^{\hat{m}}$, it will also contain a subsequence convergent to, say $u^*$. Now, using the same notation for the convergent subsequences as for the sequences themselves, we have
\begin{align*}
	\norm{\mu^*}_{TV} + f_b(A(L^+\mu^*+u_K^*)) \leq \liminf \norm{\mu_n}_{TV} + f_b( A(L^+\mu_n + u_{n,K})) \\
	=\liminf \norm{Lu_n}_{TV} + f_b( Au_n) =\min_{u \in \B} \norm{Lu}_{TV} + f_b(Au).
\end{align*}
We used Lemma \ref{lem:weakStarWeak} and the lower semicontinuity of $f_b$ and of the $TV$-norm. Hence, $L^+\mu^* + u^*_K$ is the solution whose existence we had to prove.
\end{proof} 

Now let us include spline-admissible operators in our framework.

\begin{proof}[Proof of Lemma \ref{prop:Unser}]
1. The finite-dimensionality of $\ker L$ is simply assumption 3 of Theorem 1 of \cite{unser2016splines}. Theorem 4 and 5 of \cite{unser2016splines} proves that $L$ has a right inverse $L_\Phi^{-1}$. This implies that
\begin{align*}
	\ran L \sse \ran LL_\Phi^{-1} = \ran \Id = \M.
\end{align*}

2. The space $\calC_L$ as defined in Theorem 6 of \cite{unser2016splines} is defined as
\begin{align*}
	\calC_L = L^* (\calC_0(\R^d)) + \vect(\phi_i)_{i =1}^r,
\end{align*}
where $\phi_i$ is a system of functionals which restricted to $\ker L$ becomes a of the dual of $\ker L$. Without loss of generality, we can assume that $\phi_i \vert_V =0$ for each $i$ (if not, we could instead consider the operators $\widetilde{\phi}_i = \phi_i \Pi_{\ker L}$).

Then if $a \in \calC_L$, we have
\begin{align*}
	(L^+)^*a = (L^+)^*L^*\rho + \sum_{i=1}^r \gamma_i (L^+)^* \phi_i
\end{align*}
for some $\rho \in \calC_0(\R^d)$ and $\gamma_i$. Now  $(L^+)^*L^*) = (LL^+)^* = \Pi_{\ran L}^* = \Id$ and $(L^+)^* \phi_i =0$, so that $(L^+)^*a = \rho \in C_0(\R^d)$.

If on the other  $(L^+)^*a \in \calC_0(\R^d)$, we have 
\begin{align*}
	L^*\calC_0(\R^d) \ni L^*(L^+)^*a = (L^+L)^* a = \Pi_V^*a. 
\end{align*}
Since each functional $a \in \M_L^*$ can be written as $\Pi_V^* a + \Pi_{ \ker L}^*a$, and $\Pi_{\ker L}^* a \in \vect(\phi_i)_{i=1}^r$, $a \in \C_L$.
\end{proof}

Next, we discuss the case of $L$ being the differential operator on $BV((0,1)$.
\begin{proof}[Proof of Lemma \ref{lem:Dplus}]
Note that we have $\ker L=\vect(1)$, the vector space of constant functions on $\Omega$, hence the space $V$ can be identified with the space of functions with zero mean: 
\begin{equation*}
V=\left\{u \in BV(\Omega), \int_{\Omega} u(t)\,dt=0\right\}. 
\end{equation*}

For $\mu \in \M$, consider the mapping $I:\mu \mapsto u$ defined for $s\in [0,1]$ by $u(s)=\mu([0,s])$.
We only need to prove that $DI(\mu) = \mu$ in the distributional sense. Let $\phi \in C^\infty_c(\Omega)$:
\begin{align*}
\langle I(\mu) , \phi'\rangle & = \int_{0}^1 \mu([0,t]) \phi'(t)\, dt \\
&= \int_{0}^1 \int_{0}^1 1_{[0,t]}(s) d\mu(s) \phi'(t)\, dt \\
&= \int_0^1 \int_{0}^1 1_{[s,1]}(t)  \phi'(t)\, dt d\mu(s) \\
&=\int_0^1 -\phi(s) d\mu(s)= -\langle \mu , \phi\rangle.
\end{align*}
This proves the surjectivity of $L$. We see that the proposed form of $L^+$ is the right one, since $s \mapsto \mu([0,s]) - \int_0^1 \mu([0,s])ds$ is a function of zero mean.

We now calculate
\begin{align*}
\sprod{(L^+)^*\xi, \mu}& = \sprod{\xi, L^+ \mu} \\
&= \int_0^1 \xi(t) \left(\int_0^1 1_{[0,t]}(s) d\mu(s) - \int_0^1 \mu([0,r])  dr \right) dt \\
&= \int_0^1 \left(\int_0^1  1_{[s,1]}(t)\xi(t) dt\right)d\mu(s) - \int_0^1 \xi(t) dt\cdot \int_0^1 1_{[0,r]}(s) d\mu(s)  dr \\
&= \int_0^1 \left(\int_s^1  \xi(t) dt \right) d\mu(s) -\int_0^1 \xi(t) dt\cdot \int_0^1 (1-s)  d\mu(s) 
\end{align*}
In particular, the action of $(L^+)^*\xi$ is given by a continuous function, which is vanishing on the boundary of $(0,1)$

\end{proof}

\section{Conclusion \& Outlook}
In this paper we have studied the properties of total variation regularized problems, where total-variation should be understood as a term of form  $\|Lu\|_{TV}$, with $L$ a linear operator.
We have shown that under a convexity assumption on the data-fit term, some of the solutions $\hat u$ of total-variation regularized inverse problems are $m$-sparse, where $m$ denotes the number of measurements. 
This precisely means that $L\hat u$ is an atomic measure supported on at most $m$ points.
This result extends recent advances \cite{unser2016splines}, by relaxing some hypotheses on the linear operator $L$ and on the domain of the functions.

The second contribution of this paper is to show that solutions of this infinite dimensional problem can be obtained by solving one or two consecutive finite dimensional problems, given that the measurements belong to some function spaces such as the trigonometric polynomials or the set of piecewise linear functions on polyhedral domains. Once again, this result extends significantly recent results on super-resolution \cite{candes2014towards,tang2013compressed}.
The analysis provided for piecewise linear functions is novel and we believe that it might have important consequences in the numerical analysis of infinite dimensional inverse problems: the scaling with respect to the number of grid points is just linear, contrarily to approaches based on semi-definite relaxations or Lasserre hierarchies.  

As an outlook, we want to stress out that the hypotheses formulated on the linear operator $L$ rule out a number of interesting applications, such as total variation regularization in image processing. We plan to study how the results and the proof techniques in this paper could apply to more general cases. 

\section*{Acknowledgement}
A. Flinth acknowledges support  from the Deutsche Forschungsgemeinschaft (DFG) Grant KU 1446/18-1, and from the Berlin Mathematical School (BMS). He also wishes to thank Yann Traonmillin, Felix Voigtländer and Philipp Petersen for interesting discussions on this subject.
This work was partially funded by ANR JCJC OMS. 
P. Weiss wishes to thank Michael Unser warmly for motivating him to work on the subject at the second OSA ``Mathematics in Imaging'' conference in San Francisco and for providing some insights on his recent paper \cite{unser2016splines}.
In addition, he thanks Didier Henrion particularly and Alban Gossard, Frédéric de Gournay, Jonas Kahn, Etienne de Klerk, Jean-Bernard Lasserre, Michael Overton and Lieven Vandenberghe for interesting feedbacks and insights on a preliminary version of this work.
The two authors wish to thank Gitta Kutyniok from TU Berlin for supporting this research.

%\section{Questions we would like to treat}
%\input{Questions}

%\section{Additional}
%\input{Additional}

\bibliographystyle{abbrv}
\bibliography{Biblio}

\begin{thebibliography}{10}

\bibitem{Adams}
R.~Adams.
\newblock {\em Sobolev Spaces}.
\newblock Academic Press, 1975.

\bibitem{adcock2016generalized}
B.~Adcock and A.~C. Hansen.
\newblock Generalized sampling and infinite-dimensional compressed sensing.
\newblock {\em Found. of Comp. Math.}, 16(5):1263--1323, 2016.

\bibitem{adcock2017breaking}
B.~Adcock, A.~C. Hansen, C.~Poon, and B.~Roman.
\newblock Breaking the coherence barrier: A new theory for compressed sensing.
\newblock In {\em Forum of Mathematics, Sigma}, volume~5. Cambridge University
  Press, 2017.

\bibitem{ambrosio2000functions}
L.~Ambrosio, N.~Fusco, and D.~Pallara.
\newblock {\em Functions of bounded variation and free discontinuity problems},
  volume 254.
\newblock Clarendon Press Oxford, 2000.

\bibitem{BorweinLewis1992}
J.~Borwein and A.~Lewis.
\newblock Partially finite convex programming, part {I}: Quasi relative
  interiors and duality theory.
\newblock {\em Math. Prog.}, 57(15):15--48, 1992.
\newblock doi:10.1007/BF01581072.

\bibitem{braides2002gamma}
A.~Braides.
\newblock {\em Gamma-convergence for Beginners}, volume~22.
\newblock Clarendon Press, 2002.

\bibitem{bredies2013inverse}
K.~Bredies and H.~K. Pikkarainen.
\newblock Inverse problems in spaces of measures.
\newblock {\em ESAIM: Contr Optim Ca.}, 19(1):190--218, 2013.

\bibitem{candes2014towards}
E.~J. Cand{\`e}s and C.~Fernandez-Granda.
\newblock Towards a mathematical theory of super-resolution.
\newblock {\em Commun. Pur. Appl. Math.}, 67(6):906--956, 2014.

\bibitem{candes2006robust}
E.~J. Cand{\`e}s, J.~Romberg, and T.~Tao.
\newblock Robust uncertainty principles: Exact signal reconstruction from
  highly incomplete frequency information.
\newblock {\em IEEE T. Inform. Theory}, 52(2):489--509, 2006.

\bibitem{chambolle2016geometric}
A.~Chambolle, V.~Duval, G.~Peyr{\'e}, and C.~Poon.
\newblock Geometric properties of solutions to the total variation denoising
  problem.
\newblock {\em Inv. Probl.}, 33(1):015002, 2016.

\bibitem{chandrasekaran2007fast}
S.~Chandrasekaran, M.~Gu, J.~Xia, and J.~Zhu.
\newblock {A fast QR algorithm for companion matrices}.
\newblock {\em Oper. Th. Adv. A}, 179:111--143, 2007.

\bibitem{de2017exact}
Y.~De~Castro, F.~Gamboa, D.~Henrion, and J.-B. Lasserre.
\newblock Exact solutions to super resolution on semi-algebraic domains in
  higher dimensions.
\newblock {\em IEEE T. Inform. Theory}, 63(1):621--630, 2017.

\bibitem{donoho2006compressed}
D.~L. Donoho.
\newblock Compressed sensing.
\newblock {\em IEEE T. Inf. Theory}, 52(4):1289--1306, 2006.

\bibitem{dumitrescu2007positive}
B.~Dumitrescu.
\newblock {\em Positive trigonometric polynomials and signal processing
  applications}, volume 103.
\newblock Springer, 2007.

\bibitem{duval2015exact}
V.~Duval and G.~Peyr{\'e}.
\newblock Exact support recovery for sparse spikes deconvolution.
\newblock {\em Found. Comp. Math.}, 15(5):1315--1355, 2015.

\bibitem{ekeland1999convex}
I.~Ekeland and R.~Temam.
\newblock {\em Convex analysis and variational problems}.
\newblock SIAM, 1999.

\bibitem{elad2007analysis}
M.~Elad, P.~Milanfar, and R.~Rubinstein.
\newblock Analysis versus synthesis in signal priors.
\newblock {\em Inv. Probl.}, 23(3):947, 2007.

\bibitem{fisher1975spline}
S.~Fisher and J.~Jerome.
\newblock Spline solutions to l1 extremal problems in one and several
  variables.
\newblock {\em J. Approx. Theory.}, 13(1):73--83, 1975.

\bibitem{foucart2013mathematical}
S.~Foucart and H.~Rauhut.
\newblock {\em A mathematical introduction to compressive sensing}, volume~1.
\newblock Birkh{\"a}user Basel, 2013.

\bibitem{Hormander}
L.~H\"{o}rmander.
\newblock {\em The analysis of Linear Partial Differential operators II}.
\newblock Springer, 1963.

\bibitem{kaltenbacher2008iterative}
B.~Kaltenbacher, A.~Neubauer, and O.~Scherzer.
\newblock {\em Iterative regularization methods for nonlinear ill-posed
  problems}, volume~6.
\newblock Walter de Gruyter, 2008.

\bibitem{lasserre2001global}
J.~B. Lasserre.
\newblock Global optimization with polynomials and the problem of moments.
\newblock {\em SIAM J. on Optimiz.}, 11(3):796--817, 2001.

\bibitem{cvx}
M.~MGrant and S.~Boyd.
\newblock {CVX}: Matlab software for disciplined convex programming, version
  2.1.
\newblock \url{http://cvxr.com/cvx}, mar 2014.

\bibitem{rudin1992nonlinear}
L.~I. Rudin, S.~Osher, and E.~Fatemi.
\newblock Nonlinear total variation based noise removal algorithms.
\newblock {\em Physica D: Nonlinear Phenomena}, 60(1-4):259--268, 1992.

\bibitem{rudin1991functional}
W.~Rudin.
\newblock {\em Functional analysis.}
\newblock International series in pure and applied mathematics. McGraw-Hill,
  Inc., New York, 1991.

\bibitem{tang2013compressed}
G.~Tang, B.~N. Bhaskar, P.~Shah, and B.~Recht.
\newblock Compressed sensing off the grid.
\newblock {\em IEEE T. Inform. Theory}, 59(11):7465--7490, 2013.

\bibitem{tibshirani1996regression}
R.~Tibshirani.
\newblock Regression shrinkage and selection via the lasso.
\newblock {\em Journal of the Royal Statistical Society. Series B
  (Methodological)}, pages 267--288, 1996.

\bibitem{traonmilin2017compressed}
Y.~Traonmilin, G.~Puy, R.~Gribonval, and M.~Davies.
\newblock {Compressed sensing in Hilbert spaces}.
\newblock {\em arXiv preprint arXiv:1702.04917}, 2017.

\bibitem{unser2016representer}
M.~Unser, J.~Fageot, and H.~Gupta.
\newblock {Representer Theorems for Sparsity-Promoting $\ell_1$
  Regularization}.
\newblock {\em IEEE T. Inform. Theory}, 62(9):5167--5180, 2016.

\bibitem{unser2016splines}
M.~Unser, J.~Fageot, and J.~P. Ward.
\newblock {Splines are universal solutions of linear inverse problems with
  generalized-TV regularization}.
\newblock {\em arXiv preprint arXiv:1603.01427}, 2016.

\bibitem{vandenberghe1996semidefinite}
L.~Vandenberghe and S.~Boyd.
\newblock Semidefinite programming.
\newblock {\em SIAM review}, 38(1):49--95, 1996.

\end{thebibliography}

\end{document}